\documentclass[reqno,12pt]{amsart}
\usepackage{subfigure}
\usepackage{ragged2e}
\usepackage{appendix}
\usepackage{amssymb}
\usepackage{amsfonts}
\usepackage{amsmath}
\usepackage{mathrsfs}
\usepackage{diagbox}
\usepackage{empheq}
\usepackage{color,soul}
\usepackage{bbm}
\usepackage{amsthm, graphicx,empheq}
\usepackage{framed}
\usepackage{booktabs}
\usepackage{tikz,float}
\usepackage[backref,colorlinks,linkcolor=red,anchorcolor=green,citecolor=blue]{hyperref}
\usepackage{varwidth} % 引入 varwidth 包
\usepackage{bm}
\usepackage{lineno}  % show line number for draft. disable for final version
\linenumbers

\newtheorem{lemma}{Lemma}[section]
\newtheorem{theorem}{Theorem}[section]

\newtheorem{remark}{Remark}[section]
\newtheorem{proposition}{Proposition}[section]

\numberwithin{equation}{section}

\newcommand{\rmD}{\mathrm{D}}

\makeatletter
\@namedef{subjclassname@20xx}{\textup{xxxx} Mathematics Subject Classification}
\makeatother

\linenumbers
\nolinenumbers
\begin{document}
	
\title[supersonic flow past a conical wing]{Supersonic flow of a Chaplygin gas past a conical wing with $\Lambda$-shaped cross sections}

%\author{Myoungjean Bae}	
\author{Minghong Han}
\author{Bingsong Long}
\author{Hairong Yuan}

%\address[Myoungjean Bae]
%{Department of Mathematical Sciences, KAIST, 291 Daehak‑Ro, Yuseong‑Gu, Daejeon 34141, Republic of Korea}
%\email{\tt  mjbae@kaist.ac.kr}

\address[Minghong Han]{Center for Partial Differential Equations, School of Mathematical Sciences, East China Normal University, Shanghai 200241, China}\email{\tt 52275500054@stu.ecnu.edu.cn}

\address[Bingsong Long]{School of Mathematics and Statistics, Huanggang Normal University, Hubei 438000, China}\email{\tt longbingsong@hgnu.edu.cn}

\address[H. Yuan]{School of Mathematical Sciences,  Key Laboratory of Mathematics and Engineering Applications (Ministry of Education) \& Shanghai Key Laboratory of PMMP,  East China Normal University, Shanghai 200241, China}\email{\tt hryuan@math.ecnu.edu.cn}

\keywords{Compressible Euler equations; Chaplygin gas; Nonweiler waverider; nonlinear mixed-type equation; free boundary.}
	
	%msc2020
\subjclass[2020]{35L50, 35L65, 35M12, 35Q31, 35R35, 76J20, 76H05, 76N10}

\date{\today}
		
\begin{abstract}
 In this paper, by considering the anhedral angle, we for the first time study the problem of supersonic flow of a Chaplygin gas over a conical wing with $\Lambda$-shaped cross sections, where the flow is governed by the three-dimensional steady isentropic irrotational compressible Euler equations. This work is motivated by the design of the Nonweiler wing, which is one of the simplest waveriders. Mathematically, the problem reduces to a boundary value problem for a nonlinear mixed-type equation in conical coordinates. By introducing a viscosity parameter to treat the degenerate boundary, we use the continuity method to establish the existence of a piecewise smooth self-similar solution to the problem, in the case that the shock is attached to the leading edge of the conical wing. Our results verify part of K\"uchemann's speculation on the conical flow field structures of this type, and also find a new conical flow field structure.
\end{abstract}

\allowbreak
\allowdisplaybreaks
	
\maketitle
	
\tableofcontents %disable for short paper
	
\section{Introduction and main results}\label{sec1}
To improve the lift-to-drag ratio for hypersonic flight vehicles, Nonweiler firstly proposed the concept of a waverider configuration in 1959, and introduced a waverider known as the Nonweiler or caret wing \cite{Nonweiler59}. The Nonweiler wing is a specific class of conical wings with a $\Lambda$-shaped cross section, featuring a planar attached shock wave when it is placed in a supersonic flow; see Figure \ref{fig1} redrawn from \cite[p.912]{Anderson}. In this paper, for the first time, we mathematically justify the global flow field structure of the Nonweiler wing by studying the three-dimensional steady compressible Euler equations of isentropic irrotational  Chaplygin gas flow.  

%-----------------------------------fig-----------
\begin{figure}[htb]
\centering
\includegraphics[scale=0.9]{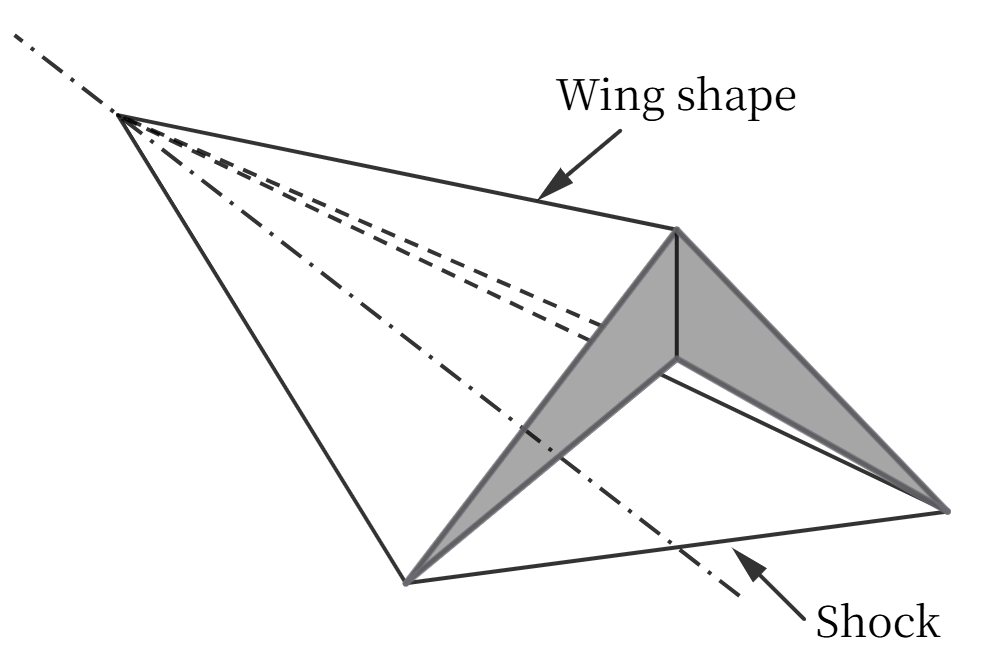}
\caption{Nonweiler or caret wing.}\label{fig1}
\end{figure}
 %-----------------------------------fig-----------
 
Contrary to the traditional aircraft design, where the flow fields and locations of the shocks are computed from given geometric configurations of the aircraft surface and the state of the incoming flow, waverider design involves inversely determining the geometric shape of the compression surface and the flow fields using a known supersonic incoming flow and the locations of shocks. The experimental and computational studies on the waveriders can be found in \cite{Anderson,KD65,Rasmussen80}. Mathematically, inspired by the design of waveriders, Li-Zhang \cite{LZ22} and Wang \cite{W11} studied the inverse problem of supersonic flow past two-dimensional curved wedges for a given shock position and incoming flow; Hu-Li-Zhang \cite{HLZ24} also considered the case of three-dimensional axisymmetric cones. Other inverse problems related to supersonic flow past obstacles have been investigated in \cite{CPZ25,Hayes59,Hida61,PZ23,V98}. However, as for the global flow field structure of the waverider, the relevant mathematical results have not yet been found even for the simplest Nonweiler wing.

In this paper, we establish the global flow field structure of the Nonweiler wing in a Chaplygin gas based on the result established in \cite{LY22} for supersonic flow over a delta wing. Specifically, starting from an ideal delta wing, namely a flat and infinite-span triangular plate, we analyze the evolution of flow field structures and shock positions during its downward folding along its ridge line. It is shown that there exists a critical configuration in which the attached shock of the conical wing with a $\Lambda$-shaped cross section becomes planar, thus verifying the existence of the flow field structure of the Nonweiler wing as illustrated in Figure \ref{fig1}.  Unlike the methods in \cite{HLZ24,LZ22,W11}, which would suggest an inverse‑problem approach, we solve this problem by adopting a direct‑problem perspective. Moreover, unlike those works that focus on boundary‑value problems for hyperbolic equations, we must consider a boundary‑value problem for a nonlinear mixed‑type equation (see Problem B below).

For the conical wing with $\Lambda$-shaped cross sections, K\"uchemann gave a speculation on the global flow field structures in \cite[p.304]{KD65}. For the Chaplygin gas, we will confirm the validity of two of these structures and analyze the non-existence of the other structure (see Section \ref{sec:possible} for details). This work will facilitate the study of waveriders, especially for conical configurations.

We remark here that the problem of shock reflection for two-dimensional unsteady self-similar flow arises in \cite{BCF09,BCF24,CF10,CF18,WZ23}, which also leads to a nonlinear mixed-type boundary value problem.

The Chaplygin gas, with state equation $p=a^2({1}/{\rho_*}-{1}/{\rho})$, has been applied to subsonic airfoil design \cite{T1939} and cosmology \cite{Popov10}, where $p$ and $\rho$ are unknowns representing the scalar pressure and density of mass, respectively; and $ a, \rho_*$ are two positive constants. In this gas, the multidimensional Riemann problem has been extensively studied in \cite{CQu12,CCW22,GSZ10,SL16,LSZ11,WG26}. For our study, the relevant properties of the Chaplygin gas established in \cite[Section 2]{Serre09} and \cite[Appendix A.1]{Serre11} are:

$(i)$ For a piecewise smooth three-dimensional steady compressible Euler flow, the flow maintains irrotational and isentropic across a shock;
			
$(ii)$ For the three-dimensional steady compressible Euler equations, all characteristics are linearly degenerate. This implies that shocks are characteristic and fluid particles cross the steady shock at the speed of sound.

%main
Now, let us describe our problem. Let $\mathcal{W}^\beta_\sigma$ denote an infinite-span conical wing with $\Lambda$-shaped cross sections, in which the angle between the leading edge and ridge line is $\frac{\pi}{2}-\sigma$ with $\sigma \in (0,\pi/2)$ being the sweep angle (see Figure \ref{fig2-x2ox3}); and the anhedral angle is $\beta$ with $\beta\in (0,\pi/2)$ (see Figure \ref{fig2-x1ox2}). Let $\bm{e}_1, \bm{e}_2, \bm{e}_3$ be the standard orthonormal basis of the Euclidean space $\mathbb{R}^3$, with a typical point $x=x_1\bm{e}_1+x_2\bm{e}_2+x_3\bm{e}_3$. In the $(x_1, x_2, x_3)$-coordinates, we symmetrically position $\mathcal{W}^\beta_\sigma$ with respect to the $x_1Ox_3$-plane, with the apex at the origin and the ridge line along the positive $x_3$-axis (see Figure \ref{fig2}), namely,
	\begin{align}\label{sweep wing}
		\mathcal{W}^\beta_\sigma=\{(x_1,x_2,x_3):x_1=-|x_2|\tan\beta, |x_2|<x_3\cot\sigma\cos\beta, x_3>0\}.
	\end{align}
Let $x_1 = s(x_2, x_3)$ be the equation for the shock attached to the leading edges of $\mathcal{W}^\beta_\sigma$ (cf. Figure \ref{fig2}). Define a region
	\begin{align*}
		\mathcal{R}^\beta_\sigma\doteq\{s(x_2,x_3)<x_1<-x_2\tan\beta,~x_2>0,~x_3>0\}.
	\end{align*}
The symmetry of $\mathcal{W}^\beta_\sigma$ allows us to consider the problem in the region $\mathcal{R}^\beta_\sigma$.

 %-----------------------------------fig-----------
\begin{figure}[htb]
\centering
\includegraphics[scale=1.1]{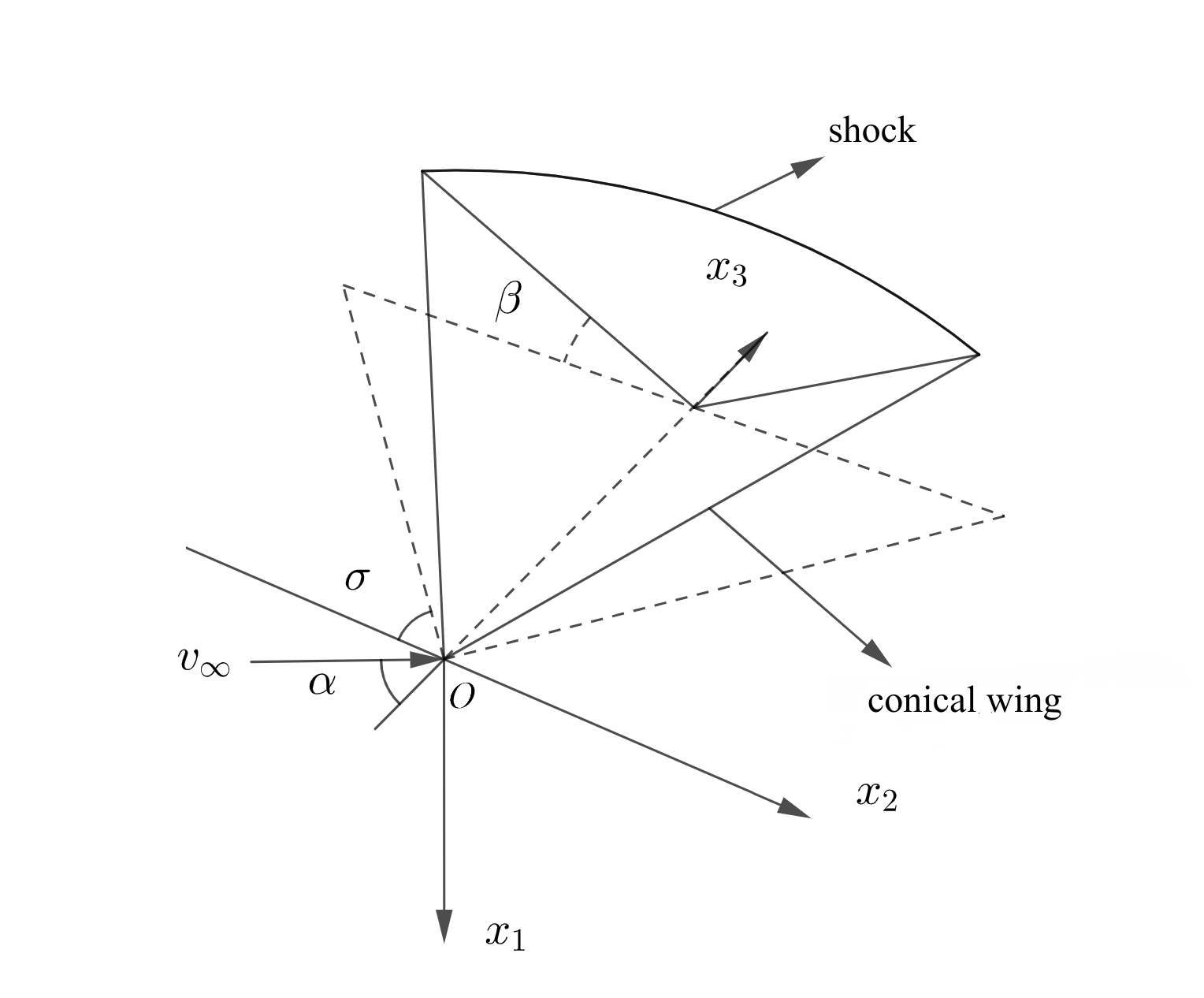}
\caption{A conical wing with $\Lambda$-shaped cross section.}\label{fig2}
\end{figure}
 %-----------------------------------fig-----------
 \begin{figure}[htb]
 	\centering
 	\includegraphics[scale=0.7]{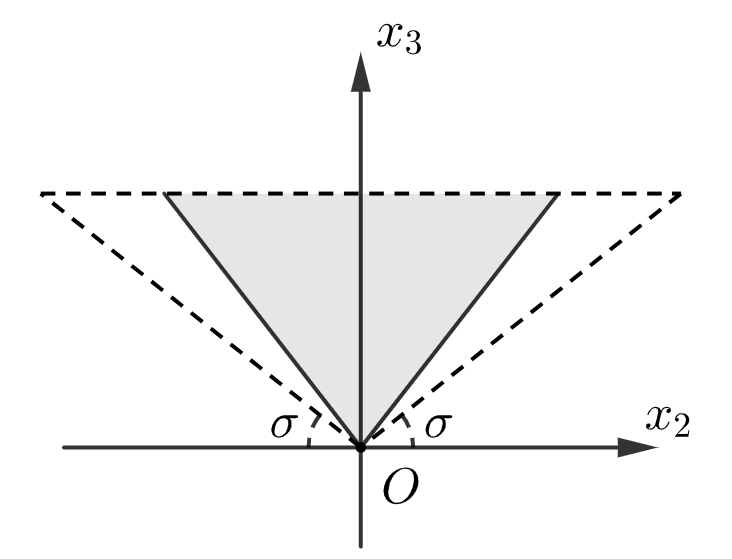}
 	\caption{View of a conical wing from the $x_1$--direction, where $\sigma$ is the sweep angle.}\label{fig2-x2ox3}
 \end{figure}
  \begin{figure}[htb]
 	\centering
 	\includegraphics[scale=0.5]{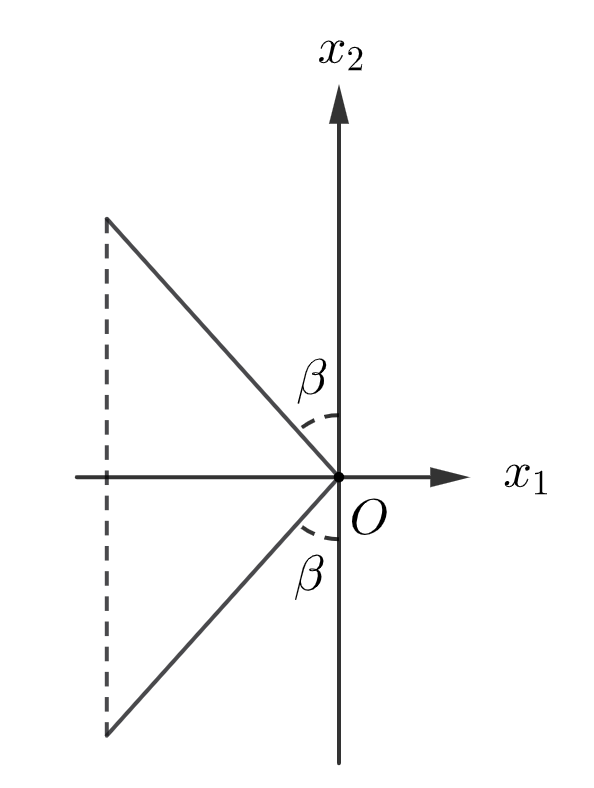}
 	\caption{View of a conical wing from the $x_3$--direction, where $\beta$ is the anhedral angle.}\label{fig2-x1ox2}
 \end{figure}

\begin{figure}[htb]
	\centering
	\includegraphics[scale=0.6]{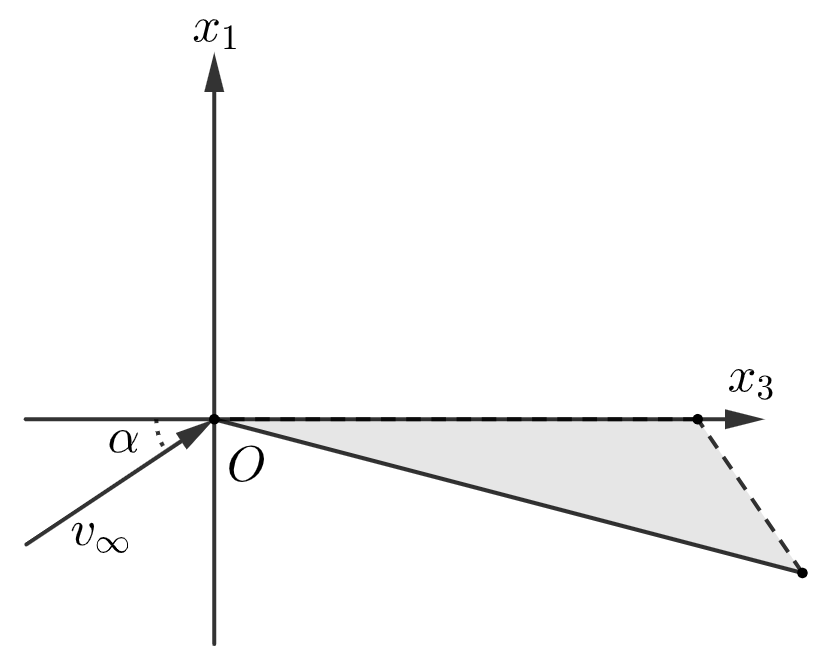}
	\caption{View of a conical wing from the $x_2$--direction, where $\alpha$ is the attack angle.}\label{fig2-x1ox3}
\end{figure}
 
The incoming flow with uniform state
   \begin{equation}
        \label{incoming flow}
       U_\infty=(\rho_{\infty},q_{\infty})
    \end{equation} 
is assumed to be supersonic, passing the wing $\mathcal{W}^\beta_\sigma$ with an attack angle $\alpha$, where $\alpha \in (0, \pi/2)$ (see Figure \ref{fig2-x1ox3}). This means that the velocity of the incoming flow is $\bm{v}_\infty \doteq (v_{1\infty}, 0, v_{3\infty})=(q_\infty \sin \alpha, 0, q_\infty \cos \alpha)$. By property $(i)$ of the Chaplygin gas above, the flow behind the shock is exactly potential flow. The three-dimensional steady compressible Euler equations for potential flow are given by
 \begin{align}\label{potential}
	\mathrm{div}_{\bm{x}}(\rho\nabla_{\bm{x}}\Phi)=0,
    \end{align}
and
	\begin{align}\label{bernoulli}
	\frac{1}{2}|\nabla_{\bm{x}}\Phi|^2+h(\rho)=B_\infty,
	\end{align}
where $\Phi$ is the velocity potential satisfying $\nabla _{\bm{x}}\Phi=\bm{v}$ with $\bm{x}\doteq (x_1, x_2, x_3)$; $h(\rho)\doteq-a^2/(2\rho^2)$ is the specific enthalpy; $B_\infty=(q^2_\infty-c^2_\infty)/2$ is the Bernoulli constant with $c_\infty$ being the local speed of sound corresponding to the incoming flow. Equation \eqref{potential} describes the law of conservation of mass, while equation \eqref{bernoulli} is Bernoulli's law. Without loss of generality, we set $B_{\infty}=\frac{1}{2}$ throughout the subsequent analysis. 

Notice that since any shock is a characteristic for the Chaplygin gas, the Rankine–Hugoniot condition $[\kern-0.1em[\rho \nabla_{\bm{x}}\Phi]\kern-0.1em]\cdot\bm{n}_{s}=0$ holds automatically on the shock, where the bracket $[\kern-0.1em[\cdot]\kern-0.1em]$ denotes the jump of quantities from one side of the discontinuity  to the other, and $\bm{n}_{s}$ is a unit normal vector to the shock. Then, the flow satisfies the following Dirichlet boundary condition 
	\begin{align}\label{continuec}
		\Phi=\Phi_\infty\doteq v_{1\infty} x_1 + v_{3\infty}x_3
	\end{align}
on the shock, and the following slip boundary conditions
	\begin{align}
		&\nabla_x\Phi\cdot \bm{n}_w=0 \quad \text{on}\quad \{x_1=-x_2\tan\beta,~ x_2>0\} ,\label{bcwing}\\
		&\nabla_x\Phi\cdot \bm{n}_{sy}=0 \quad \text{on}\quad \{x_2=0\},\label{bcsym}
	\end{align}
	where $\bm{n}_w = (\cos\beta, \sin\beta, 0)$ is the exterior unit normal to $\{x_1 = -x_2\tan\beta,~x_2>0\}$, and $\bm{n}_{sy} = (0, -1, 0)$ is the exterior unit normal to $\{x_2 = 0\}$.  
	
Therefore, our problem can be formulated mathematically as
 %%%%%%%%%%%%%%%%%%%%%%%%%%%%%%%%%%%%
\vspace{\baselineskip} % 段前间距    
\begin{center}    
\fbox{\begin{varwidth}{0.9\linewidth} % 设置宽度为行宽的80%        
\centering \textbf{Problem A}: For the wing $\mathcal{W}^\beta_\sigma$ and incoming flow $U_{\infty}$ given by \eqref{sweep wing} and \eqref{incoming flow} respectively, find a solution to \eqref{potential}--\eqref{bernoulli} in the region $R^\beta_\sigma$ with the Dirichlet condition \eqref{continuec} and slip conditions \eqref{bcwing}--\eqref{bcsym}. % 内容居中    
\end{varwidth}}    
\end{center}    
\vspace{\baselineskip} % 段后间距
%%%%%%%%%%%%%%%%%%%%%%%%%%%%%%%%%%%%% 
	
The following theorem is the main result of this paper.
	\begin{theorem}\label{thm: Main}
	    Assume that the wing $\mathcal{W}^\beta_\sigma$ and incoming flow $U_{\infty}$ given by $(\ref{sweep wing})$ and $(\ref{incoming flow})$, respectively. Then, we can find a critical attack angle $\alpha_0 = \alpha_0(\rho_\infty, q_\infty) \in (0, \pi/2)$ such that for any fixed $\alpha \in (0, \alpha_0)$, there exists a critical sweep angle $\sigma_0 = \sigma_0(\rho_\infty, q_\infty, \alpha) \in (0, \pi/2)$ such that for $\sigma \in(0, \sigma_0]$, there is a critical anhedral angle $\beta_0 = \beta_0(\rho_\infty, q_\infty, \alpha,\sigma) \in [0, \pi/2)$, and Problem A admits a piecewise smooth solution for all $\beta\in[0,\beta_0]$.
	\end{theorem}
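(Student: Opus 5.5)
We plan to reduce Problem A to a two-dimensional free boundary problem in conical coordinates and then use a continuity argument in the anhedral angle $\beta$, starting from the flat delta wing $\beta=0$, for which \cite{LY22} gives a solution.

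\textbf{Reduction.} Since the wing and the incoming flow are invariant under dilations, we look for a self-similar potential $\Phi(\bm x)=x_3\,\phi(\xi,\eta)$ with $\xi=x_1/x_3$ and $\eta=x_2/x_3$. Substituting into \eqref{potential}--\eqref{bernoulli} gives a quasilinear second-order equation for $\phi$ of the form
\[
(c^2-(\phi_\xi-\xi\psi)^2)\phi_{\xi\xi}-2(\phi_\xi-\xi\psi)(\phi_\eta-\eta\psi)\phi_{\xi\eta}+(c^2-(\phi_\eta-\eta\psi)^2)\phi_{\eta\eta}=0,
\]
with $\psi=\phi-\xi\phi_\xi-\eta\phi_\eta$ and $c^2$ given by the Chaplygin Bernoulli law. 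This equation is elliptic exactly where the pseudo-Mach number is less than $1$. Because of property $(ii)$, the shock is the sonic circle of the uniform incoming state. Let $\phi_\infty=v_{1\infty}\xi+v_{3\infty}$ and let $C_\infty$ denote its sonic cone in conical coordinates; then the shock coincides with $C_\infty$ wherever it is not a straight (planar) attached part.

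Problem A therefore becomes Problem B on the domain bounded by the following pieces:
\begin{itemize}
\item the wing segment $\{\xi=-\eta\tan\beta,\ 0<\eta<\cot\sigma\cos\beta\}$, carrying the oblique condition from \eqref{bcwing};
\item the symmetry line $\eta=0$, carrying the Neumann condition from \eqref{bcsym};
\item the shock, carrying $\phi=\phi_\infty$ together with $\nabla\phi=\nabla\phi_\infty$ on the degenerate sonic part.
\end{itemize}

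\textbf{Hyperbolic part.} Near the leading edge $(\xi,\eta)=(-\cot\sigma\sin\beta,\cot\sigma\cos\beta)$ the attached shock is a plane. For small $\alpha$ and small $\sigma$ the flow there is a constant supersonic state $\phi_1$, obtained from the two-dimensional oblique-shock relations for a Chaplygin gas normal to the leading edge. The conditions $\alpha<\alpha_0$ and $\sigma\le\sigma_0$ are what guarantee that this state exists and is pseudo-supersonic at the edge. The constant region is then bounded by the sonic arc $\Gamma_1$ of $\phi_1$, and the elliptic region is enclosed by $\Gamma_1$, $C_\infty$, the symmetry line and the wing.

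\textbf{Elliptic free boundary problem.} The unknown boundary is the point where $\Gamma_1$ meets $C_\infty$ and, for larger $\beta$, possibly where $\Gamma_1$ meets the wing; this gives the two structures in Section~\ref{sec:possible}. We impose $\phi=\phi_1$ on $\Gamma_1$, a degenerate Dirichlet condition. To handle the degeneracy we add a viscosity term $\varepsilon\Delta\phi$, or equivalently cut off the coefficients near the sonic boundary. We then establish three estimates, uniform in $\varepsilon$ and in $\beta\in[0,\beta_0]$:
\begin{enumerate}
\item a comparison principle, which gives the bounds $\phi_1\le\phi\le\phi_\infty$ and $\phi_\infty-\phi\ge0$;
\item strict ellipticity in the interior, by a maximum-principle argument for the pseudo-Mach number in the spirit of Chen--Feldman;
\item weighted Hölder estimates in parabolic-scaled norms near $\Gamma_1\cup C_\infty$, following \cite{BCF09}, together with corner estimates at the wing/symmetry corner and the wing/sonic corner.
\end{enumerate}

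\textbf{Continuity method.} Let $S$ be the set of $\beta$ for which a solution exists. At $\beta=0$ the solution is known from \cite{LY22}. Openness of $S$ follows from the implicit function theorem in the weighted spaces; for the $\varepsilon$-regularised problem one may use a Leray--Schauder degree argument instead. Closedness follows from the uniform estimates and then letting $\varepsilon\to0$. The continuation runs up to $\beta_0$, which is defined as the supremum of the angles at which the geometric configuration persists: the shock stays attached, the sonic arc $\Gamma_1$ stays inside the domain, and so on. At the critical value the planar part of the shock covers the whole cross section, which is the Nonweiler configuration. The piecewise smooth solution of Problem A is finally recovered from $\phi$ together with the uniform state $\phi_1$.

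\textbf{Main obstacle.} We expect the main difficulty to be the uniform a priori estimates near the degenerate sonic boundary and at its intersection with the wing or with $C_\infty$ as $\beta$ varies, in particular keeping ellipticity and the corner regularity under control at these points. Our first attempt will be to adapt the Bae--Chen--Feldman sonic-boundary estimates, supplemented by a barrier built from $\phi_1$.
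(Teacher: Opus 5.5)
Your reduction, the constant state behind the planar shock and the shape of the elliptic region agree with the paper for $\beta\le\beta_c$. The analytic core, however, is missing. Two things must be proved uniformly in the regularization: (a) $\phi\ge\sqrt{1+|\bm\xi|^2}+\delta$ with $\delta>0$ on each compact subset of $\Omega\cup\Gamma_{wing}\cup\Gamma_{sym}$; (b) a Lipschitz bound up to the degenerate arcs. Your items (1)--(3) give neither, for the following reasons.
\begin{itemize}
\item The bound $\phi\ge\phi_1$ gives no ellipticity near $\Gamma_{cone}^{\infty}\setminus\{P^\beta_1\}$. There $\phi_1<\sqrt{1+|\bm\xi|^2}$, because $\mathcal{C}^\beta_\sigma$ is inscribed in $\mathcal{C}_\infty$.
\item $\phi_\infty$ agrees with the data only on $\Gamma_{cone}^{\infty}$, so nothing bounds $\partial_n\phi$ from above along $\Gamma_{cone}^{\beta}$.
\item A maximum principle for the pseudo-Mach number yields at best pointwise $L<1$, since $L=1$ on the sonic arcs. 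That is not a margin on compact sets that survives $\varepsilon\to0$.
\item The Bae--Chen--Feldman parabolic-scaled estimates are invoked by analogy only; the paper obtains merely Lipschitz regularity at the sonic arcs.
\item You do not say which comparison principle you would use, and none is available for the equation written in $\phi$.
\end{itemize}
\emph{The missing idea} is to work with $w=\phi/\sqrt{1+|\bm\xi|^2}$. For every $\bm\eta\in\mathbb{R}^3$, the function $w^{\bm\eta}=\bm\eta\cdot(\bm\xi,1)/\sqrt{1+|\bm\xi|^2}$ solves the equation exactly. Taking $\hat{\bm\eta}$ in the sector $\mathcal{D}_2$ containing $\Omega$ (subsolutions) or in its antipodal sector $\mathcal{D}_1$ (supersolutions) makes the Neumann inequalities on $\Gamma_{wing}$ and $\Gamma_{sym}$ strict, so the comparison lemma for $\mathcal{N}_\mu$ applies. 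The infimum of the supersolutions equals the data on the convex sonic arcs. Together with the maximum principle for $|\rmD\phi|^2$, reflection across the Neumann lines and $\rmD\phi(O)=0$, this gives (b). The subsolution with $\bm\eta_0=(1+\varepsilon+\delta)(\bm\xi_0,1)/\sqrt{1+|\bm\xi_0|^2}$ gives (a).

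The openness step is also unsupported. The implicit function theorem requires unique solvability of the linearized mixed problem. Linearizing in $\phi$ produces a nonnegative zeroth-order coefficient, and you offer no remedy. The paper passes to $\psi=\mathrm{arccosh}(w)$, whose equation has principal coefficients depending only on $\rmD\psi$ and satisfies $\partial_\psi L<0$. A Leray--Schauder argument does not avoid this, since it still needs the estimates above uniformly along the homotopy.

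Continuing in $\beta$ creates further avoidable trouble: the domain moves, the Neumann--Neumann corner at $O$ has the varying angle $\pi/2-\beta$, and $\Gamma_{cone}^{\infty}$ collapses to $P_2$ at $\beta_c$. For a Chaplygin gas there is no free boundary, since $P^\beta_1$ and all sonic arcs are explicit. Only $\phi=\sqrt{1+|\bm\xi|^2}$ is imposed on those arcs; also prescribing $\nabla\phi=\nabla\phi_\infty$ would overdetermine the problem. The paper therefore fixes $\beta$, shifts the Dirichlet data to $\sqrt{1+|\bm\xi|^2}+\varepsilon$, and runs the continuity method in $\mu\in[0,1]$ on a fixed domain, starting from the linear equation $\Delta\phi+\rmD^2\phi[\bm\xi,\bm\xi]=0$.

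Finally, your $\beta_0$ is the Nonweiler angle $\beta_c$. The paper's $\beta_0$, the first angle at which the reflected shock $S^\beta_R$ reaches the wing perpendicularly, is larger. You do not treat $(\beta_c,\beta_0]$, where the two planar shocks meet on the symmetry line and $\partial\Omega$ contains the sonic arc of the state behind $S^\beta_R$.
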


   \begin{remark}\rm
       For a given incoming flow $U_\infty$, mass concentration will occur when $\alpha \geq \alpha_0$, as discussed in Section \ref{sub2.1} below. Besides, for a given $U_\infty$ and $\alpha \in (0, \alpha_0)$, when $\sigma >\sigma_0$, the shock will detach from the leading edge of $\mathcal{W}^\beta_\sigma$ and intersect it only at its apex (see \cite{L25} for more detail), which conflicts with the waverider design requirement that the shock should remain attached. As for the case $\beta>\beta_0$, the corresponding analysis is more complex and will be investigated in future work. Here, these critical angles will be defined by \eqref{alpha0}--\eqref{sigma0} and \eqref{debeta0} below. \qed
    \end{remark}

   \begin{remark}\rm
         For the analysis of Problem A without the requirement that the incoming flow velocity $v_{2\infty}=0$, see Section \ref{sec:Asywing} for further details. Moreover, for a thin Nonweiler wing as in Figure $\ref{fig1}$, the analysis is completely analogous to the case of $\mathcal{W}^\beta_\sigma$, and the corresponding result coincides with that in Theorem $\ref{thm: Main}$. Thus, we omit the relevant details. \qed
    \end{remark}
    
    \begin{remark}\rm
        For any fixed $\alpha \in (0, \alpha_0)$ and $\sigma \in(0, \sigma_0]$, there always exists an angle $\beta_c=\beta_c(\rho_\infty, q_\infty, \alpha,\sigma)\in(0,\beta_0)$ such that the attached shock becomes planar; that is, the structure of the Nonweiler wing as illustrated in Figure $\ref{fig1}$ is reasonable, see Lemma \ref{lemma22}. \qed
    \end{remark}

    \begin{remark}\rm 
       The anhedral angle  $\beta$  is a key parameter in the design of supersonic aircraft. To the best of our knowledge, this parameter has not been considered in previous mathematical  studies. \qed
    \end{remark}

The remainder of this paper is organized as follows. In Section \ref{sec2pre}, we first derive the uniform downstream states outside the Mach cone by applying the shock polar for the Chaplygin gas. We subsequently determine the structures of the shocks in the pseudo-self-similar $(\xi_1,\xi_2)$-coordinates, which facilitates the reformulation of Problem A into a boundary value problem for a nonlinear mixed-type equation, denoted as Problem B. Section \ref{sec:3} focuses on the flow inside the Mach cone through the analysis of Problem B. Specifically, by introducing two key parameters, we consider the problem \eqref{eq: for Fmu}--\eqref{eq:nianxing BC} below instead of Problem B. By establishing a crucial Lipschitz estimate, we prove the existence of the viscosity solution to the problem \eqref{eq: for Fmu}--\eqref{eq:nianxing BC} via the continuity method, and then show that the viscosity solution converges to the solution of Problem B. Section \ref{sec:Further} gives a brief discussion on K\"uchemann's speculation for the global conical flow field structures, and also analyzes the problem of supersonic flow over an asymmetric conical wing with $\Lambda$-shaped cross sections. Appendix \ref{appendix a} recalls the shock polar for the Chaplygin gas.

\section{Analysis of the flow outside Mach cone and the shock structures}\label{sec2pre}

When the conical wing $\mathcal{W}^\beta_\sigma$ is simplified to the case $\beta = 0$, Problem A is reduced to the problem of supersonic flow past an ideal delta wing, and the existence of piecewise smooth solutions has been established in \cite[Theorem 1.2]{LY22}. Thus, in what follows, we mainly focus on $\beta>0$.
	
\subsection{Uniform downstream flow outside the Mach cone}\label{sub2.1}
From the Bernoulli's law \eqref{bernoulli}, it follows that the density $\rho$ can be determined explicitly in terms of the velocity potential $\Phi$ as
	\begin{align}\label{rhoPhi}
	\rho=\frac{a}{\sqrt{|\nabla_{\bm{x}}\Phi|^2-1}}.
	\end{align}
Inserting \eqref{rhoPhi} into \eqref{potential}, we derive a quasilinear equation for $\Phi$: 
	\begin{align}\label{second order}
		(c^2-\Phi^2_{x_1})\Phi_{x_1x_1}+&(c^2-\Phi^2_{x_2})\Phi_{x_2x_2}+(c^2-\Phi^2_{x_3})\Phi_{x_3x_3}\nonumber\\&-2\Phi_{x_1}\Phi_{x_2}\Phi_{x_1x_2}-2\Phi_{x_1}\Phi_{x_3}\Phi_{x_1x_3}-2\Phi_{x_2}\Phi_{x_3}\Phi_{x_2x_3}=0.
	\end{align}
The type of this equation is characterized by its characteristic form
	\begin{align*}
		Q(\bm{\zeta})=c^2-|\nabla_{\bm{x}}\Phi\cdot\bm{\zeta}|^2\quad \text{for any}\quad \bm{\zeta} \in \mathbb{R}^3, ~|\bm{\zeta}|=1,
	\end{align*}
which indicates that \eqref{second order} is hyperbolic in regions of supersonic flow and elliptic in regions of subsonic flow. Recall that the normal component of the flow velocity across the shock is sonic (i.e., the property $(ii)$ of the Chaplygin gas listed in Section \ref{sec1}). This indicates that the flow behind the shock is supersonic, of which equation \eqref{second order} is then hyperbolic, owing to the supersonic incoming flow. Therefore, there exists a Mach cone emanating from the apex of $\mathcal{W}^\beta_\sigma$, such that the solution to Problem A remains undisturbed outside the Mach cone. Moreover, outside the Mach cone, the flow behind the shock is uniform and the corresponding attached shock is flat. In the following, we denote the flat shock by $S^\beta_{ob}$. 

The main goal of this subsection is to calculate the explicit form of the solution to Problem A outside the Mach cone. To decompose the incoming flow velocity along the direction perpendicular to the leading edge of $\mathcal{W}^\beta_{\sigma}$ and subsequently apply the shock polar to calculate the uniform downstream flow state, we introduce an orthonormal basis \(\{\bm{e}_i, \bm{e}_j, \bm{e}_k\}\) adapted to the geometry of the wing \(\mathcal{W}^\beta_{\sigma}\). Specifically, \(\bm{e}_i\) is chosen as the unit normal to \(\mathcal{W}^\beta_{\sigma}\), \(\bm{e}_j\) as the unit vector along its leading edge, and \(\bm{e}_k = \bm{e}_i \times \bm{e}_j\) as the unit tangent perpendicular to the leading edge. This basis given explicitly by
\[
\begin{aligned}
\bm{e}_i &\doteq (\cos \beta,\; \sin \beta,\; 0), \\
\bm{e}_j &\doteq (-\cos\sigma \sin\beta,\; \cos\sigma \cos\beta,\; \sin\sigma), \\
\bm{e}_k &\doteq \bm{e}_i \times \bm{e}_j = (\sin\sigma \sin\beta,\; -\sin\sigma \cos\beta,\; \cos\sigma),
\end{aligned}
\]
enables a natural decomposition of the incoming flow velocity $\bm{v}_\infty$ that 
\begin{align}\label{vinfcomponent}
\bm{v}_\infty&=v_{1\infty}\bm{e}_1+v_{2\infty}\bm{e}_2+v_{3\infty}\bm{e}_3\nonumber\\
		&=v_{1\infty}\cos\beta \bm{e}_i+(v_{3\infty}\sin\sigma-v_{1\infty}\cos\sigma\sin\beta)\bm{e}_j+(v_{1\infty}\sin\sigma\sin\beta+v_{3\infty}\cos\sigma)\bm{e}_k,
	\end{align}
where we introduce
	\begin{align*}
		\tilde{\bm{v}}_\infty\doteq v_{1\infty}\cos\beta \bm{e}_i+(v_{1\infty}\sin\sigma\sin\beta+v_{3\infty}\cos\sigma)\bm{e}_k
	\end{align*}
    as the velocity component of the incoming flow perpendicular to the leading edge of the wing. The magnitude of $\tilde{\bm{v}}_\infty$ is
    	\begin{align}\label{eqqinftytilde}
    		\tilde{q}_\infty \doteq\sqrt{v^2_{1\infty}\cos^2\beta+(v_{1\infty}\sin\sigma\sin\beta+v_{3\infty}\cos\sigma)^2},
    	\end{align}
    and the angle between $\tilde{\bm{v}}_\infty$ and $\bm{e}_k$ is
    	\begin{align*}
    	\theta_n \doteq \arctan\left(\frac{\cos\beta}{\sin\sigma\sin\beta+\cot\alpha\cos\sigma}\right).
    	\end{align*}
Then, the uniform downstream flow velocity \(\bm{v}^\beta_\sigma = (v^\beta_{1\sigma}, v^\beta_{2\sigma}, v^\beta_{3\sigma})\) can be determined via the shock polar relation for the Chaplygin gas. More precisely, this velocity component along \(\bm{e}_k\), denoted by \(q^\beta_{j\sigma}\), is obtained by setting \(u_0 = \tilde{q}_\infty\), \(c_0 = c_\infty\) and \(\theta = \theta_n\) in \eqref{u1v1polar}. Meanwhile, the component along \(\bm{e}_j\) remains unchanged upon crossing the flat shock \(S^\beta_{ob}\) due to the constancy of the tangential velocity. Hence, \(\bm{v}^\beta_\sigma\) admits the decomposition
\[
\bm{v}^\beta_\sigma = \big(v_{3\infty}\sin\sigma - v_{1\infty}\cos\sigma \sin\beta\big)\,\bm{e}_j + q^\beta_{j\sigma}\,\bm{e}_k,
\]
which, upon expansion in the Cartesian coordinates, yields the explicit expressions:
\begin{align}
v^\beta_{1\sigma} &= v_{1\infty}\cos^2\sigma\sin^2\beta - v_{3\infty}\sin\sigma\cos\sigma\sin\beta + q^\beta_{j\sigma}\sin\sigma\sin\beta, \label{v1sigma}\\
v^\beta_{2\sigma} &= -v_{1\infty}\cos^2\sigma\sin\beta\cos\beta + v_{3\infty}\sin\sigma\cos\sigma\cos\beta - q^\beta_{j\sigma}\sin\sigma\cos\beta, \label{v2sigma}\\
v^\beta_{3\sigma} &= -v_{1\infty}\cos\sigma\sin\sigma\sin\beta + v_{3\infty}\sin^2\sigma + q^\beta_{j\sigma}\cos\sigma. \label{v3sigma}
\end{align}
Obviously, with \(\bm{v}^\beta_\sigma\) explicitly known, the corresponding velocity potential outside the Mach cone is given by
\[
\Phi^\beta_\sigma = v^\beta_{1\sigma}x_1 + v^\beta_{2\sigma}x_2 + v^\beta_{3\sigma}x_3,
\]
which satisfies equation \eqref{second order} with the boundary conditions \eqref{continuec}--\eqref{bcwing}. The associated sound speed follows from the Bernoulli law \eqref{bernoulli} as $c^\beta_\sigma = \sqrt{|\nabla_{\bm{x}}\Phi^\beta_\sigma|^2 - 1}$.

At the end of this subsection, we focus on the roles of the angles $\alpha$, $\sigma$ and $\beta$ since excessive variations in these angles will induce a phenomenon called concentration. To avoid this occurrence, it follows from \eqref{restrictpolar} and the discussion in \cite[Appendix A]{LY22} that
    \begin{align}\label{restict}
        c_\infty<\tilde{q}_\infty<\frac{c_\infty}{\sin\theta_n}.
    \end{align}
Noticing that $\tilde{q}_\infty\sin\theta_n=v_{1\infty}\cos\beta$ and $v_{1\infty}=q_\infty\sin\alpha$, and using the right-hand side of \eqref{restict}, one gets
    \begin{align}\label{rightineq}
    q_\infty\sin\alpha\cos\beta<c_\infty.    
    \end{align}
It is obvious that the above relation holds for all $\beta\in[0,\pi/2)$ if it holds at $\beta=0$; namely, for fixed $\alpha$ and $\sigma$, the phenomenon of concentration occurs only at $\beta = 0$.

Besides, from \eqref{vinfcomponent} and \eqref{eqqinftytilde}, the component of velocity $\bm{v}_\infty$ along the $\bm{e}_j$ direction decreases with respect to $\beta$; that is, $\tilde{q}_\infty$  increases with respect to $\beta$. Thus, the left-hand side inequality of \eqref{restict} is valid for $\beta\in[0,\pi/2)$ once it holds at $\beta=0$. 

In conclusion, we only need to consider the case $\beta=0$. For this case, from the discussion in \cite[Remark 2.1]{LY22}, and using \eqref{rightineq}, one knows that for any $\alpha \in(0, \alpha_0)$, with 
\begin{align}\label{alpha0}
        \alpha_0\doteq\arcsin\left(\frac{c_\infty}{q_\infty}\right),
    \end{align}
the phenomenon of concentration would never occur. Moreover, the left-hand side of \eqref{restict} implies the sweep angle $\sigma<\sigma_0$ with
    \begin{align}\label{sigma0}
        \sigma_0\doteq\arcsin\left(\frac{\sqrt{q^2_\infty-c^2_\infty}}{v_{3\infty}}\right).
    \end{align}

    \begin{remark}\rm 
    The condition $\sigma<\sigma_0$ is sufficient to ensure that the shock remains attached to the leading edge of the conical wing $\mathcal{W}^\beta_\sigma$. \qed
    \end{remark}

\subsection{Structures of shocks in conical coordinates}\label{sec2.2}	 

It is shown in \cite[Section 2.2]{LY22} that for any fixed $\alpha \in (0, \alpha_0)$, the attached shock appears for $\sigma \in (0, \sigma_0]$, where $\alpha_0$ and $\sigma_0$ are given by \eqref{alpha0} and \eqref{sigma0}, respectively. From now on, we fix $\alpha \in (0, \alpha_0)$ and $\sigma \in (0, \sigma_0]$. We will further show that there exists a critical angle $\beta_0$ such that the attached shock always exists for $\beta\in(0,\beta_0]$. In addition, the structures of shocks admit an explicit representation when expressed in the conical coordinates introduced below.
	
Since the problem \eqref{second order} and \eqref{continuec}--\eqref{bcsym} is invariant with respect to the following scaling
	\begin{align}\label{scaling}
		\bm{x}\rightarrow\tau \bm{x},~ (\rho,\Phi)\rightarrow(\rho, \frac{\Phi}{\tau} ) \quad \text{for} \quad \tau\neq 0,
	\end{align}
we can introduce the conical coordinates $\bm{\xi}=(\xi_1,\xi_2) \doteq (x_1/x_3, x_2/x_3)$ to find a self-similar solution of the form:
	\begin{align}\label{conical}
		\rho(\bm{x})=\rho(\xi_1,\xi_2), \quad\Phi(\bm{x})=x_3\phi(\xi_1,\xi_2).
	\end{align}
		
To proceed, we give the following notations. Let $\mathcal{C}_\infty$ and $\mathcal{C}^\beta_\sigma$ denote the Mach cones of the wing apex, determined respectively by the incoming flow and uniform downstream flow. By an abuse of notation that causes no ambiguity, in the conical coordinates, we still use $\mathcal{C}_\infty$ and $\mathcal{C}^\beta_\sigma$ to denote the corresponding curves of the Mach cones, and $S^\beta_{ob}$ to denote the corresponding oblique shock respectively. 

We first present the equations for the oblique shock \(S^\beta_{ob}\) and the Mach cones \(\mathcal{C}_\infty\), \(\mathcal{C}^\beta_\sigma\) in the conical coordinates, given respectively by
\begin{align}
S^\beta_{ob}:&\quad v_{1\infty}\xi_1+v_{3\infty}=v^\beta_{1\sigma}\xi_1+v^\beta_{2\sigma}\xi_2+v^\beta_{3\sigma},\label{sob}\\
\mathcal{C}_\infty:&\quad (v_{1\infty}\xi_1+v_{3\infty})^2=1+|\bm{\xi}|^2,\label{cinf}\\
\mathcal{C}^\beta_\sigma:&\quad (v^\beta_{1\sigma}\xi_1+v^\beta_{2\sigma}\xi_2+v^\beta_{3\sigma})^2=1+|\bm{\xi}|^2.\label{csigma}
\end{align}
These equations are derived as follows. The shock equation \eqref{sob} is obtained from the continuity of the velocity potential \(\Phi\) across the shock. Indeed, applying the scaling relation \eqref{scaling}, together with the explicit expressions for \(\Phi_\infty\) and \(\Phi^\beta_\sigma\), one obtains
\begin{align}
		&\phi_\infty=v_{1\infty}\xi_1+v_{3\infty},\label{phiinf}\\
&\phi^\beta_\sigma=v^\beta_{1\sigma}\xi_1+v^\beta_{2\sigma}\xi_2+v^\beta_{3\sigma}.\label{phisigma}
	\end{align}
Then the condition \(\phi_\infty = \phi^\beta_\sigma\) on \(S^\beta_{ob}\) leads directly to \eqref{sob}.

Turning to the Mach cones, we note that in the conical coordinates, their equations follow from the general form introduced in \cite[(B.4)]{LY22} reformulated into the form
\begin{equation}\label{machconecc}
	|\mathrm{D}\phi|^{2}+|\phi-\mathrm{D}\phi\cdot\bm{\xi}|^{2}-\frac{\phi^{2}}{1+|\bm{\xi}|^{2}}=c^{2},
\end{equation}
which can be further simplified to
\begin{align}\label{phi2}
		\phi^2=1+|\bm{\xi}|^2,
\end{align}
by virtue of the relation $c^2=|\mathrm{D}\phi|^2+|\phi-\mathrm{D}\phi\cdot\bm{\xi}|^2-1$ derived from \eqref{rhoPhi}, \eqref{conical} and the constitutive relation \(\rho c = a\). Subsequently, substituting the conical potentials \(\phi_\infty\) and \(\phi^\beta_\sigma\) from \eqref{phiinf}–\eqref{phisigma} into \eqref{phi2} yields \eqref{cinf} and \eqref{csigma}. It is worth noting that these Mach cones are not necessarily circular in the conical coordinates.
	
Next, we consider the global structures of shocks. From the property that any shock is a characteristic, the oblique shock $S^\beta_{ob}$ is required to be tangent to the curve $\mathcal{C}_\infty$ at a point denoted by $P^\beta_1$. This tangency point, together with other key intersection points defined below, determines the geometric configurations (cf. Figure \ref{fgbetageq0} or Figure \ref{figbeta0} for $\beta=0$). Let \begin{align}\label{gammawing}
    \Gamma_{wing}:~~ \xi_1\cos\beta+\xi_2\sin\beta=0~~(\xi_2\geq0),
\end{align}
denote the conical wing $ \mathcal{W}^\beta_\sigma$ with  $x_2\geq0$ in the $(\xi_1,\xi_2)$-coordinates. The intersections of $\mathcal{C}_\infty$ with $\Gamma_{wing}$ and the negative $\xi_1$-axis are denoted by $P^\beta_0$ and $P_2$ respectively; those of $\mathcal{C}^\beta_\sigma$ and $S^\beta_{ob}$ with $\Gamma_{wing}$ are denoted by $P^\beta_4$ and $P^\beta_5$ respectively; and the intersection of the extension line of $S^\beta_{ob}$ with the $\xi_1$--axis is denoted by $P^\beta_6$.

Based on the intersection points above, we denote \(\Gamma^\infty_{\mathrm{cone}}\) by the arc \(\widehat{P^\beta_1P_2}\) and \(\Gamma^\beta_{\mathrm{cone}}\) by the arc \(\widehat{P^\beta_1P^\beta_4}\) (cf. Figure \ref{figbeta0} for $\beta=0$). The line segment \(\Gamma_{\mathrm{sym}}\) is defined as \(OP_2\), and the shock curve \(\Gamma_{\mathrm{shock}}\) is given by the union \(S^\beta_{ob}\cup\Gamma^\infty_{\mathrm{cone}}\). Then, we can introduce the domains \(U\) and \(\Omega\) for subsequent analysis, with \(U\) representing the region \(OP_2P^\beta_1P^\beta_5\) and \(\Omega\) representing the region \(OP_2P^\beta_1P^\beta_4\).

 %-----------------------------------fig-----------
\begin{figure}[htb]
\centering
\includegraphics[scale=1.46]{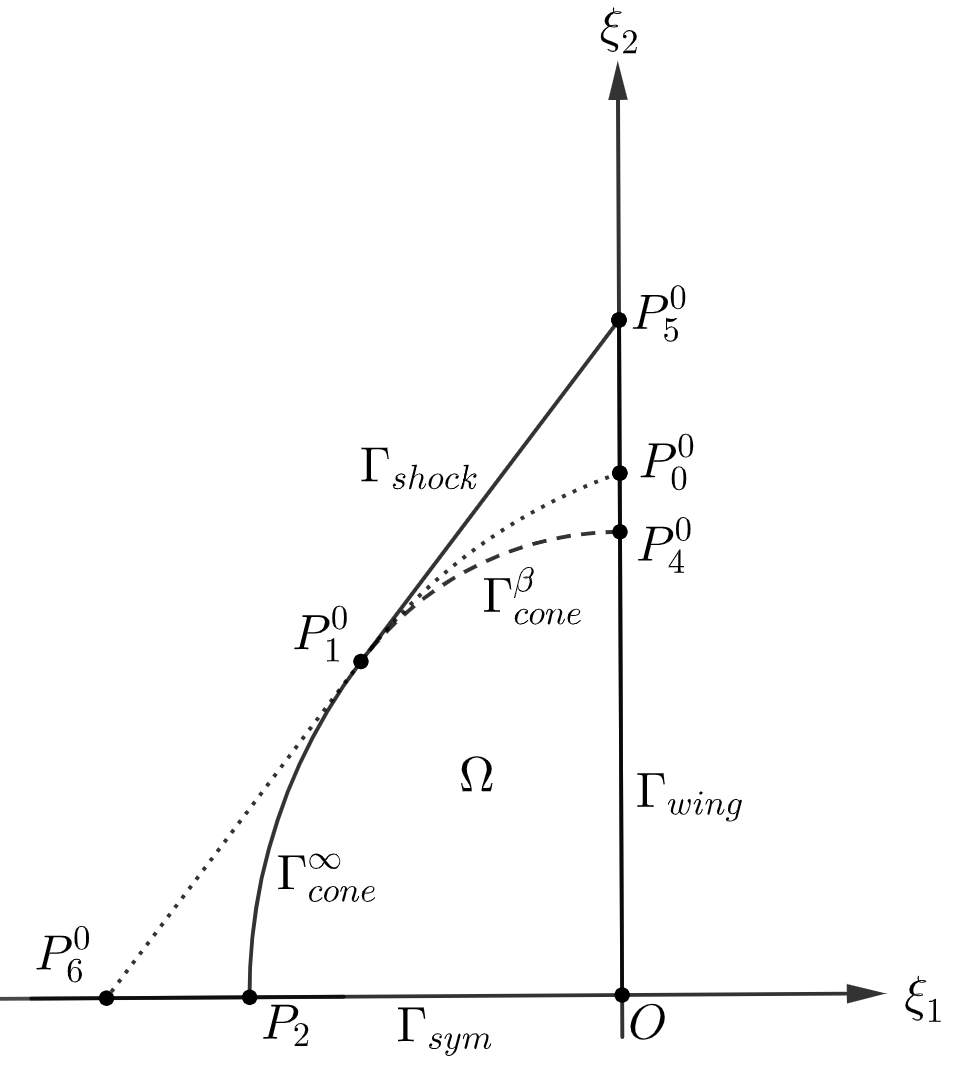}
\caption{The case for $\beta=0$.}\label{figbeta0}
\end{figure}
 %-----------------------------------fig----------- 
We recall that for the case $\beta=0$, the pattern of shock waves was discussed in \cite[Section 2.2]{LY22}, and Figure \ref{figbeta0} is redrawn from \cite[Figure 4(a)]{LY22}. It should also be pointed out that the points $P^0_4$ and $P^0_5$ meet at $ P^0_0$ when $\sigma = \sigma_0$, where $P^0_i$ denotes $P^\beta_i$ with $\beta=0$ for $i=0,1,4,5,6$. Thus, we only need to consider the case $\beta>0$ in the following.

\begin{lemma}\label{lemma22}
Let $\mathcal{C}_\infty, \ \mathcal{C}^\beta_\sigma$ be defined as \eqref{cinf}--\eqref{csigma} with $\beta>0$. For any fixed $\alpha \in(0, \alpha_0)$ and $\sigma \in (0, \sigma_0]$, there exists a critical angle 
    \begin{align}\label{eqdefinebetac}
        \beta_c\doteq\arcsin(-\xi_{1P_2}/\cot\sigma),
%=\arctan(\frac{2v_{1\infty}v_{3\infty}+\sqrt{4v^2_{1\infty}v^2_{3\infty}-4(v^2_{1\infty}-1)(v^2_{3\infty}-1)}{2(v^2_{1\infty}-1)\cot\sigma}),
    \end{align}
so that the point $P^\beta_1$ coincides with $P_2$, and the curved shock wave vanishes; namely, the attached shock becomes planar, where $\xi_{1P_2}<0$ denotes the $\xi_1$-coordinate of $P_2$ determined by \eqref{cinf} with $\xi_2=0$, and $\alpha_0$, $\sigma_0$ are given by \eqref{alpha0}, \eqref{sigma0}, respectively.
	\end{lemma}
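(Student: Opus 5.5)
The plan is to describe the oblique shock $S^\beta_{ob}$ purely geometrically in the $(\xi_1,\xi_2)$-plane, and then read off the condition $P^\beta_1=P_2$ as an explicit equation in $\beta$.

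\textbf{Step 1: locate the leading edge.} The leading edge of $\mathcal{W}^\beta_\sigma$ (with $x_2>0$) is the ray $x_2=x_3\cot\sigma\cos\beta$, $x_1=-x_2\tan\beta$. In conical coordinates it is therefore the single point
\[
P^\beta_5=(-\cot\sigma\sin\beta,\ \cot\sigma\cos\beta)\in\Gamma_{wing}.
\]
Since the shock is attached, the line $S^\beta_{ob}$ given by \eqref{sob} passes through $P^\beta_5$.

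\textbf{Step 2: the shock as a tangent line.} By property $(ii)$ of the Chaplygin gas, the shock is characteristic, so $S^\beta_{ob}$ is tangent to $\mathcal{C}_\infty$ at $P^\beta_1$. Hence $S^\beta_{ob}$ is one of the tangent lines to the conic $\mathcal{C}_\infty$ drawn from the external point $P^\beta_5$, namely the one on the side of the wing. This is consistent with \cite[Section 2.2]{LY22} and Figure \ref{figbeta0} at $\beta=0$.

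I would check two facts about \eqref{cinf}.
\begin{itemize}
\item $\mathcal{C}_\infty$ is a closed convex conic (an ellipse) symmetric about the $\xi_1$-axis. The equation contains $\xi_2$ only through $\xi_2^2$, and the $\xi_1^2$-coefficient is $v_{1\infty}^2-1$. Its negativity should follow from $B_\infty=\tfrac12$ and $\alpha<\alpha_0$ via \eqref{rightineq}; this is the point I would verify first.
\item $P_2=(\xi_{1P_2},0)$ with $\xi_{1P_2}<0$ is the left vertex. By symmetry, the tangent line to $\mathcal{C}_\infty$ at $P_2$ is the vertical line $\xi_1=\xi_{1P_2}$.
\end{itemize}

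\textbf{Step 3: solve for $\beta_c$.} By Step 2, $P^\beta_1=P_2$ if and only if $P^\beta_5$ lies on the line $\xi_1=\xi_{1P_2}$, and that line is the correct tangent branch. This gives
\[
-\cot\sigma\sin\beta=\xi_{1P_2}, \qquad\text{i.e.}\qquad \beta=\arcsin\bigl(-\xi_{1P_2}/\cot\sigma\bigr).
\]
This is exactly \eqref{eqdefinebetac}. At $\beta=\beta_c$ the tangency point is the vertex on the symmetry axis, so the arc $\Gamma^\infty_{\mathrm{cone}}$ degenerates to a point. The attached shock is then the planar $S^{\beta_c}_{ob}$ together with its mirror image, and these two planes coincide across $\xi_2=0$ because both are vertical lines. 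Hence the attached shock is planar.

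\textbf{Step 4: show $\beta_c\in(0,\pi/2)$.} This requires $0<-\xi_{1P_2}<\cot\sigma$. The lower bound is immediate from solving \eqref{cinf} at $\xi_2=0$, because $v_{3\infty}>1$ forces the left root to be negative. The upper bound says that the point $(-\cot\sigma,0)$, the limit of $P^\beta_5$ as $\beta\to\pi/2$, lies outside $\mathcal{C}_\infty$.

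\textbf{Main obstacle.} I expect the upper bound in Step 4 to be the main obstacle, together with confirming that the vertical tangent is the physically relevant branch. My plan for it:
\begin{itemize}
\item Note that $P^\beta_5$ moves along the circle $|\bm\xi|=\cot\sigma$ as $\beta$ increases.
\item At $\beta=0$ it lies outside $\mathcal{C}_\infty$ with tangency point $P^0_1$ in $\{\xi_2>0\}$. This holds by \cite{LY22} and $\sigma\le\sigma_0$, via \eqref{restict}.
\item Outside-ness is preserved for all $\beta\in[0,\pi/2)$ by the monotonicity of $\tilde q_\infty$ in $\beta$ established after \eqref{restict}, since the left inequality there is precisely the attachment/outside condition.
\item The tangency point then moves continuously along the upper arc of $\mathcal{C}_\infty$. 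Letting $\beta\to\pi/2$ and comparing $\cot\sigma$ with $|\xi_{1P_2}|$ should give the inequality. Failing that, I would directly verify $-\cot\sigma<\xi_{1P_2}$ from \eqref{cinf} and the explicit form \eqref{sigma0} of $\sigma_0$.
\end{itemize}
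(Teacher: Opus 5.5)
Your core argument is the same as the paper's. You use $|OP^\beta_5|=\cot\sigma$, the tangency of $S^\beta_{ob}$ to $\mathcal{C}_\infty$, and the vertical tangent at $P_2$ (by symmetry), so that $\triangle OP_2P^\beta_5$ is right-angled at $P_2$ and $\sin\beta_c=|OP_2|/\cot\sigma$. The paper spends most of its proof on things you skip. It shows that $\mathcal{C}_\infty$ and $\mathcal{C}^\beta_\sigma$ meet only at $P^\beta_1$, via positivity of $\phi_\infty$ and $\phi^\beta_\sigma$, and it notes $v^{\beta_c}_{1\sigma}=v^{\beta_c}_{2\sigma}=0$. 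The statement as written does not need either. Conversely, your Step 4 makes explicit the bound $0<-\xi_{1P_2}<\cot\sigma$, which the paper leaves to a continuity remark.

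One auxiliary claim fails as stated: $\mathcal{C}_\infty$ need not be an ellipse. With $B_\infty=\tfrac12$ we have $q_\infty^2-c_\infty^2=1$, so $\alpha<\alpha_0$ is equivalent to $v_{3\infty}>1$. On the other hand, \eqref{rightineq} only gives $v_{1\infty}<c_\infty$, not $v_{1\infty}<1$. For example, $c_\infty=2$, $q_\infty=\sqrt5$, $\alpha=\pi/4$ is admissible, yet $v_{1\infty}=v_{3\infty}=\sqrt{5/2}>1$. Then \eqref{cinf} is a hyperbola with roots $-1/3$ and $-3$ on $\xi_2=0$. Your ``left root'' $-3$ has $\phi_\infty<0$ (backward nappe), so it is not $P_2$.

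The repair is easy. Work with $K\doteq\{\bm{\xi}:\sqrt{1+|\bm{\xi}|^2}<\phi_\infty(\bm{\xi})\}$. It is convex (a sublevel set of a convex function), symmetric in $\xi_2$, and contains $O$ as an interior point because $v_{3\infty}>1$. Then $P_2$ is its leftmost point, i.e. the root with $\phi_\infty>0$, automatically with $\xi_{1P_2}<0$. The supporting line at $P_2$ is vertical, which is all your Steps 2--3 use.

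For Step 4, argue directly rather than letting $\beta\to\pi/2$. The limit only yields a non-strict inequality. Also, the monotonicity of $\tilde q_\infty$ you invoke holds only while $v_{3\infty}\sin\sigma-v_{1\infty}\cos\sigma\sin\beta>0$, which can fail for large $\beta$ when $\sigma<\alpha$. Instead, let $r(\beta)$ be the distance from $O$ to $\partial K$ along $(-\sin\beta,\cos\beta)$. It solves $v_{3\infty}-v_{1\infty}r\sin\beta=\sqrt{1+r^2}$, hence is strictly decreasing in $\beta$. Therefore $|OP_2|=r(\pi/2)<r(0)=\sqrt{v_{3\infty}^2-1}=\cot\sigma_0\le\cot\sigma$, since \eqref{sigma0} reads $\sin\sigma_0=1/v_{3\infty}$.
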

\begin{proof}
Let us first demonstrate that $\mathcal{C}_{\infty}$ and $\mathcal{C}^\beta_{\sigma}$ are only tangent at the point $P^\beta_1$ when $\beta\in(0,\beta_c)$, where $\beta_c$ is to be determined such that $P^\beta_1$ coincides with $P_2$.
Note that the intersection points of \(\mathcal{C}_\infty\) and \(\mathcal{C}^\beta_\sigma\) are governed by the equation 
\begin{equation}\label{eqintercc}
|v_{1\infty}\xi_1+v_{3\infty}|=|v^\beta_{1\sigma}\xi_1+v^\beta_{2\sigma}\xi_2+v^\beta_{3\sigma}|,
	\end{equation}
which follows directly from \eqref{cinf} and \eqref{csigma}. 
%Let us first show that $\mathcal{C}_{\infty}$ and $\mathcal{C}^\beta_{\sigma}$ are tangent at the point $P^\beta_1$ when $\beta\in(0,\beta_c)$, where $\beta_c$ is to be determined such that $P^\beta_1$ coincides with $P_2$. From \eqref{cinf} and \eqref{csigma}, we see that the intersection points of $\mathcal{C}_{\infty}$ and $\mathcal{C}^\beta_{\sigma}$ satisfy
	%\begin{equation}\label{eqintercc}
%|v_{1\infty}\xi_1+v_{3\infty}|=|v^\beta_{1\sigma}\xi_1+v^\beta_{2\sigma}\xi_2+v^\beta_{3\sigma}|.
	%\end{equation}
    Let $P^\beta$ be the intersection point of the extension line of $S^\beta_{ob}$ and the $\xi_2$-axis. Obviously, $P^\beta_6$ lies on the right of $P^0_6$ when $\beta\in(0,\beta_c)$. Moreover, $\phi_\infty >0$ at $P^0_6$ follows from \cite[Lemma 2.2]{LY22}; namely, $v_{1\infty}\xi_{1P^0_6}+v_{3\infty}>0$, where $\xi_{1P^0_6}$ is the $\xi_1$-coordinate of $P^0_6$. Thus, in the triangle $\Delta OP^\beta_6P^\beta$, we have
	\begin{equation}\label{eqphiinfty}
		\phi_{\infty}(\bm{\xi})\geq v_{1\infty}\xi_{P^\beta_6}+v_{3\infty}\geq v_{1\infty}\xi_{1P^0_6}+v_{3\infty}>0,
	\end{equation}
which implies $\phi_{\infty}>0$ on $\mathcal{C}_{\infty}\cap \{\xi_2\geq0,\xi_1\leq0\}$. On the other hand, noting that $v^\beta_{1\sigma}\geq 0$ and $v^\beta_{2\sigma}\leq 0$, we deduce that the minimum value of $\phi^\beta_\sigma$ is attained on $P^\beta_6P^\beta$. Combined with this fact and \eqref{sob}, \eqref{eqphiinfty}, we obtain $\phi^\beta_\sigma>0$ on $\mathcal{C}^\beta_{\sigma} \cap \{\xi_2\geq0,\xi_1\leq 0\}$. In conclusion, the relation \eqref{eqintercc} can be reduced to 
	\begin{equation*}
v_{1\infty}\xi_1+v_{3\infty}=v^\beta_{1\sigma}\xi_1+v^\beta_{2\sigma}\xi_2+v^\beta_{3\sigma},
	\end{equation*}
which is the equation for the flat shock $S^\beta_{ob}$. Note that $P^\beta_1$ is also the intersection point of $\mathcal{C}_{\infty}$ and $S^\beta_{ob}$. Therefore, $P^\beta_1$ is the only intersection point of $\mathcal{C}_{\infty}$ and $\mathcal{C}^\beta_{\sigma}$.

We then determine the position of $P^\beta_5$ and $P^\beta_0$. Using \eqref{sob} and \eqref{gammawing}, one has 
\begin{align*}
    \xi_{2P^\beta_5}=\frac{v^\beta_{3\sigma}-v_{3\infty}}{v^\beta_{1\sigma}\tan\beta-v^\beta_{2\sigma}-v_{1\infty}\tan\beta}=\cot\sigma\cos\beta,
\end{align*}
where $\xi_{2P^\beta_5}$ denotes the $\xi_2$-coordinate of $P^\beta_5$. That is,  $|OP^\beta_5|=\cot\sigma$ on $\Gamma_{wing}$. Using the relation $\phi_\infty>0$ on $\mathcal{C}_{\infty}\cap \{\xi_2\geq0,\xi_1\leq 0\}$, and noting $v_{1\infty}>0$, we deduce from \eqref{cinf} and the definition of $P^\beta_0$ that $|OP^\beta_0|\leq|OP^0_0|$. This implies
\begin{align}\label{p0p5}
    |OP^\beta_0|<|OP^0_0|\leq|OP^0_5|=|OP^\beta_5|,
\end{align}
where the equality holds if and only if $\beta=0$ and $\sigma=\sigma_0$. Thus, $P^\beta_5$ is always beyond $P^\beta_0$ on $\Gamma_{wing}$ with $\beta>0$. This ensures the existence of the oblique shock $S^\beta_{ob}$; namely, the shock will attach to the leading edge rather than detach from it. 

As shown above, when the tangent point $P^\beta_1$ is above $P_2$, $\mathcal{C}^\beta_\sigma$ and $\mathcal{C}_\infty$ are only tangent at $P^\beta_1$. Then, thanks to the continuity of the position of $P^\beta_1$ with respect to $\beta$, we can calculate $\beta_c$ by solving the triangle $\bigtriangleup OP_2P^\beta_5$ when $P^\beta_1$ and $P_2$ coincide; that is, $S^\beta_{ob}$,  $\mathcal{C}^\beta_\sigma$ and $\mathcal{C}_\infty$ are tangent at the point $P_2$ (see Figure \ref{fgbetaeqc}). It is straightforward to derive $\beta_c = \arcsin(-\xi_{1P_2}/\cot\sigma)$. We note that when $\beta = \beta_c$, $v^\beta_{1\sigma} = v^\beta_{2\sigma} = 0$. It is then straightforward to verify the positivity of $\phi_\infty$ on $\mathcal{C}_\infty$ and $\phi^\beta_\sigma$ on $\mathcal{C}^\beta_\sigma$ for $\beta = \beta_c$.
\end{proof}

\begin{remark}\rm
    It follows from \eqref{cinf} that $\xi_{1P_2}$ is uniquely determined by the incoming flow. Hence, the explicit expression of $\beta_c$ in \eqref{eqdefinebetac} implies that for each $\sigma \in (0, \sigma_0]$, there exists a unique $\beta_c = \beta_c(\rho_\infty, q_\infty, \alpha, \sigma)$ determined by the incoming flow parameters and the sweep angle $\sigma$. In other words, for a given incoming flow and a planar attached shock, the corresponding conical wing with $\Lambda$-shaped cross sections is not unique.\qed
\end{remark}

\begin{remark}\rm 
    We briefly analyze the relative locations of $P^\beta_5,~P^\beta_0$ and $P^\beta_4$ on $\Gamma_{wing}$ when $\beta\in(0,\beta_c]$. As above, it follows from \eqref{p0p5} that $P^\beta_5$ is always beyond $P^\beta_0$ on $\Gamma_{wing}$. Besides, from the discussion in Lemma $\ref{lemma22}$, we see that $\mathcal{C^\beta_\sigma}$ is an inscribed cone of $\mathcal{C_\infty}$ and the tangent point is $P^\beta_1$, i.e., $|OP^\beta_0|>|OP^\beta_4|$. Hence, the $\xi_2$--coordinates of $P^\beta_i$ for $i=0,4,5$ satisfy $\xi_{2P^\beta_5}>\xi_{2P^\beta_0}>\xi_{2P^\beta_4}$. \qed
\end{remark}

Using the above analysis, we are able to draw the patterns of shock waves
as in Figures \ref{fgbetageq0} and \ref{fgbetaeqc} for the cases $0<\beta<\beta_c$ and $\beta=\beta_c$, respectively.

%-----------------------------------fig-----------
\begin{figure}[htbp]\label{betatobetac}
\centering
\begin{minipage}[t]{0.9\textwidth}
\centering
\subfigure[$0<\beta<\beta_c$]{
\includegraphics[width=5.8cm]{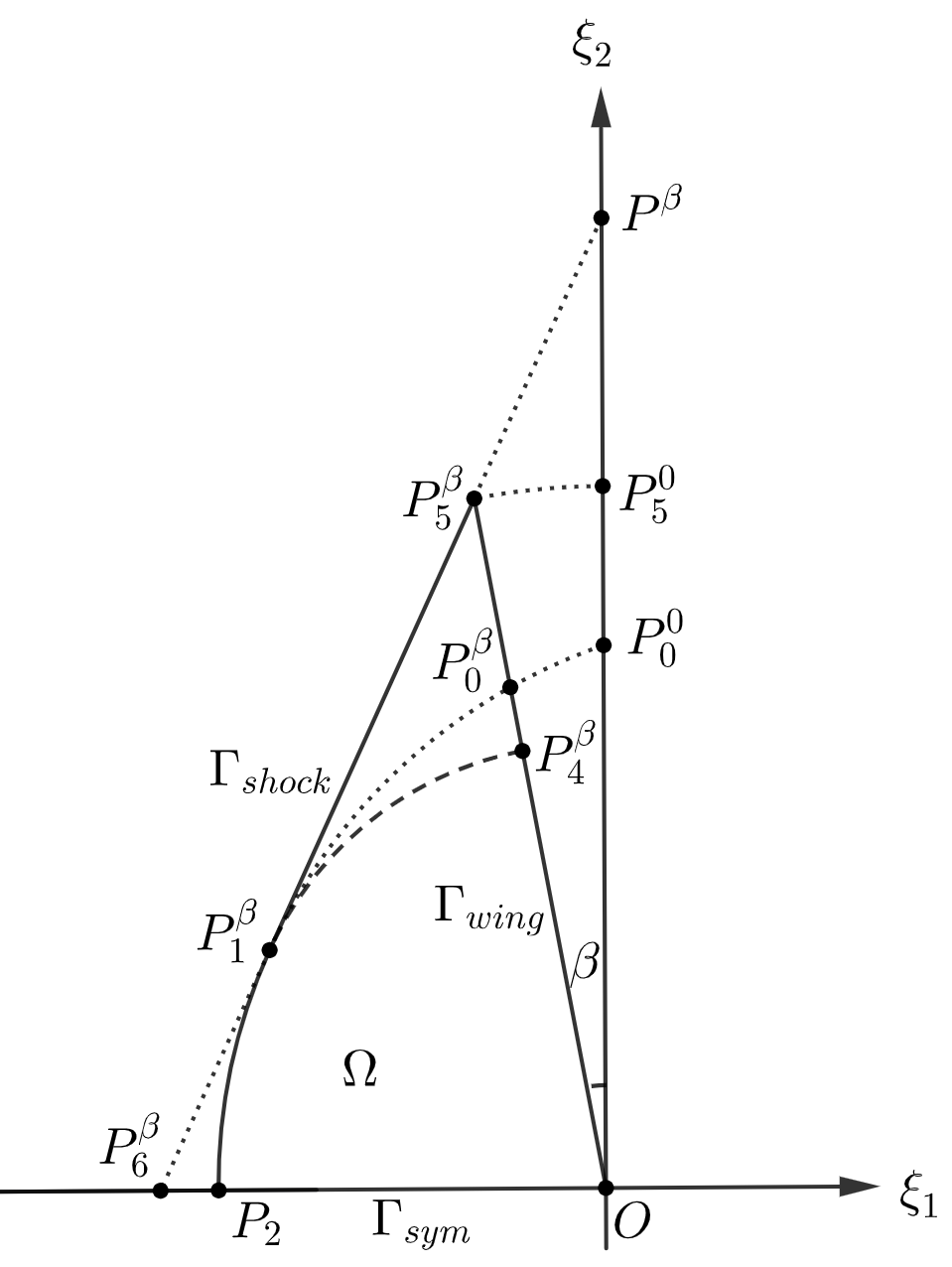}
\label{fgbetageq0}}
\subfigure[$\beta=\beta_c$]{
\includegraphics[width=6.4cm]{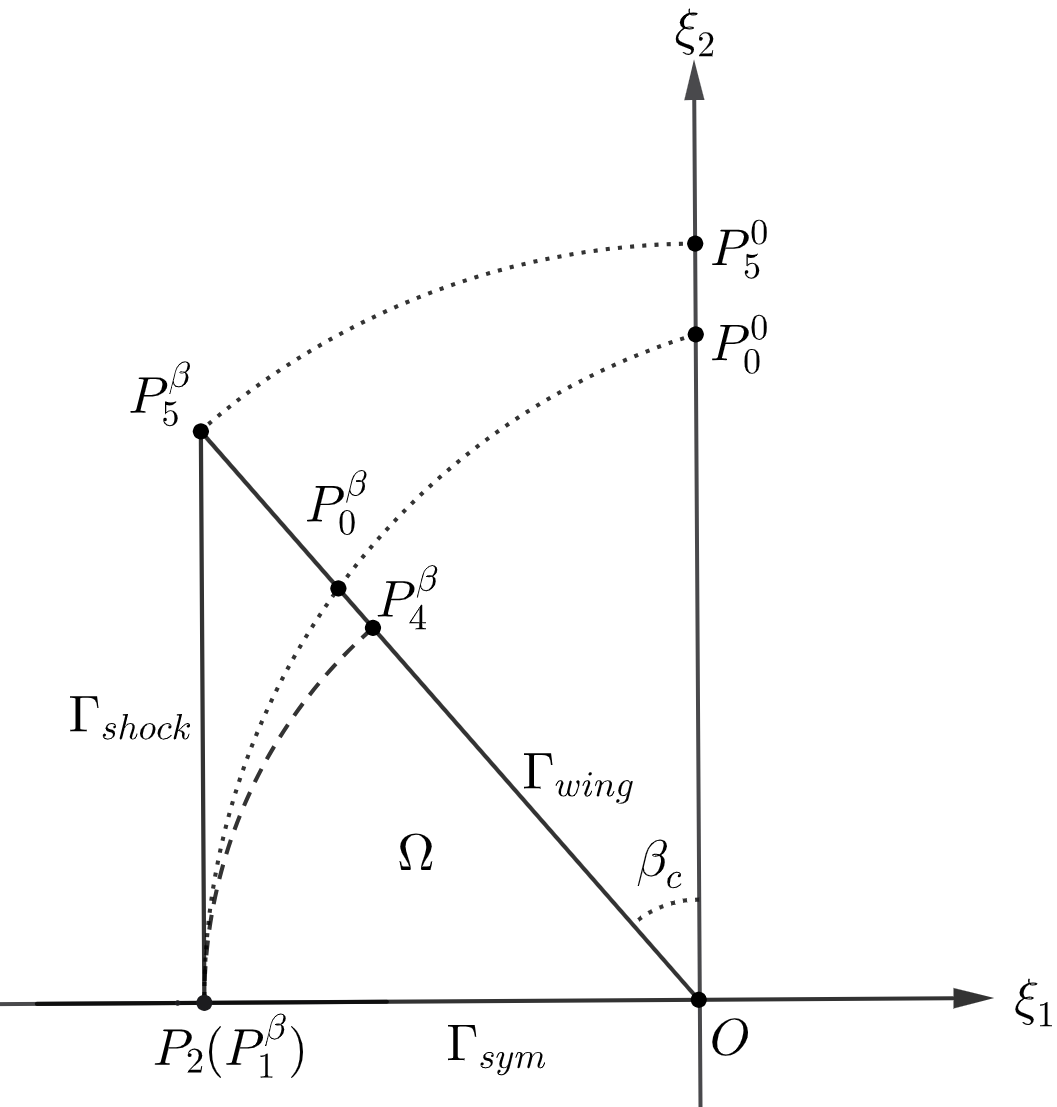}
\label{fgbetaeqc}}
\end{minipage}
\caption{Patterns of shock waves in the $(\xi_1,\xi_2)$-plane.}
\end{figure}
 %-----------------------------------fig-----------
 
We are in a position to consider the case $\beta>\beta_c$. Note that the oblique shock $S^\beta_{ob}$ is always tangent to the Mach cone $\mathcal{C}_\infty$ at $P^\beta_1$, and this tangent point will move to the lower half-plane of $\xi_1O\xi_2$ once $\beta>\beta_c$. Therefore, the attached shocks will intersect at a point $P^\beta_6$ on the $\xi_1$-axis before they are tangent to $\mathcal{C}_\infty$. In this paper, we only study the case in which two resulting shock waves are generated. Because of the symmetric patterns, we can still consider the upper half-plane of $\xi_1O\xi_2$. 

Let $S^\beta_R$ be the resulting oblique shock, and $\mathcal{C}^{\beta'}_\sigma$ be the curve corresponding to the Mach cone determined by the uniform flow behind $S^\beta_R$, and $P^\beta_R$ be the tangent point of $\mathcal{C}^\beta_\sigma,~\mathcal{C}^{\beta'}_\sigma$ and $S^\beta_R$. The state of uniform flow behind $S^\beta_R$ can be calculated as in Section \ref{sub2.1}, and the details are presented at the last of this subsection. Intuitively, this model seems to be reducible to a problem of two-dimensional steady shock wave regular reflection as in \cite[Section 3.5]{Serre09} (see Figure \ref{fgbetageqbetac}), but actually this is unreasonable; see Remark \ref{reserre} below for details.   

%-----------------------------------fig-----------
\begin{figure}[htbp]
\centering
\begin{minipage}[t]{0.9\textwidth}
\centering
\subfigure[$\beta_c<\beta<\beta_0$]{
\includegraphics[width=6.3cm]{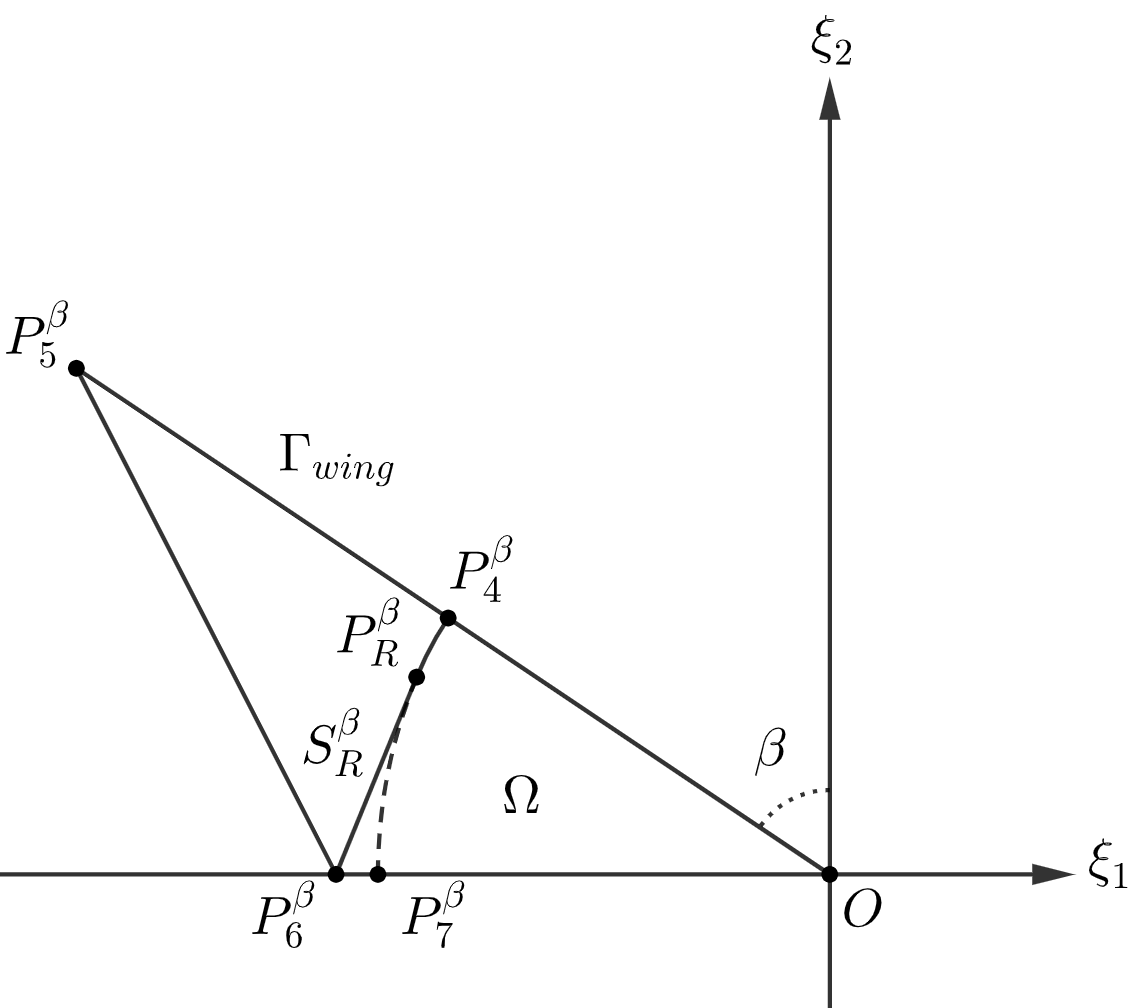}
\label{fgbetageqbetac}}
\subfigure[$\beta=\beta_0$]{
\includegraphics[width=6.5cm]{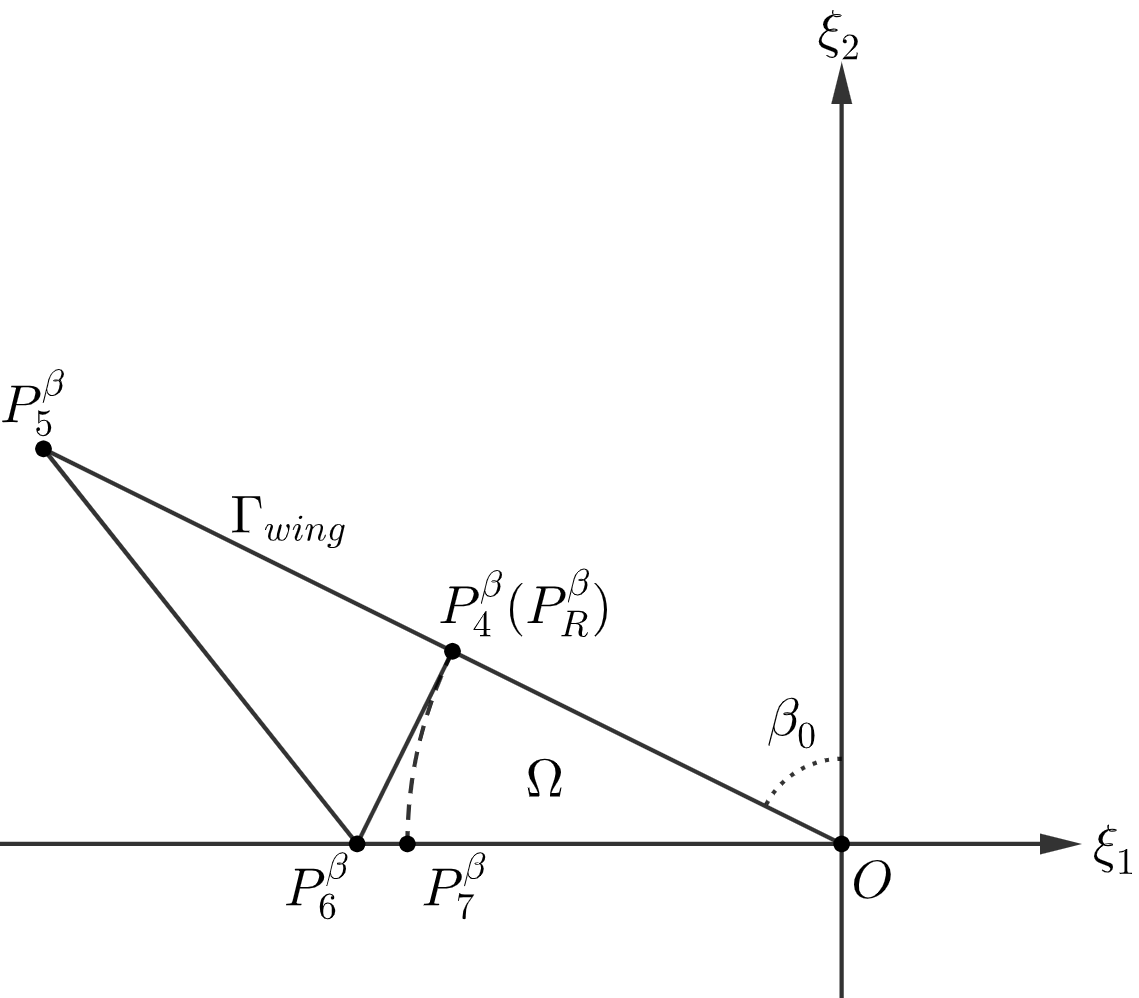}
\label{fgbetaeqbeta0}}
\end{minipage}
\caption{Patterns of shock waves in the $(\xi_1,\xi_2)$-plane.}\label{betact0beta0}
\end{figure}
%-----------------------------------fig-----------

\begin{lemma}
    Let $\mathcal{C}_\infty, \mathcal{C}^\beta_\sigma$ be defined as in \eqref{cinf}--\eqref{csigma} with $\beta>\beta_c$. For any fixed $\alpha \in(0, \alpha_0)$ and $\sigma \in (0, \sigma_0]$, there exists an angle $\beta_0$ satisfying  
    \begin{align}\label{debeta0}
    	\beta_0=\min_\beta \{\beta=\arcsin{\frac{|OP^\beta_4|}{|OP^\beta_6|}}\},
    \end{align}
     so that the flow is uniform for $\beta\in(\beta_c,\beta_0]$ in the domain $U\setminus \Omega$, where $P^\beta_7$ is the intersection point of $\mathcal{C^{\beta'}_\sigma}$ and $\xi_1$-axis, while $U$ and $\Omega$ respectively denote the domains $OP^\beta_5 P^\beta_6 $  and $OP^\beta_4 P^\beta_R P^\beta_7$ here, and $\alpha_0$, $\sigma_0$ are given by \eqref{alpha0}, \eqref{sigma0}, respectively.
\end{lemma}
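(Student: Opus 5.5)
The plan is to mirror the construction in Section \ref{sub2.1}, now applied to the reflected configuration, and then locate $\beta_0$ by a continuity argument in $\beta$.

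\emph{Step 1: the state behind $S^\beta_R$.} For $\beta>\beta_c$, the two flat shocks $S^\beta_{ob}$ (from the two leading edges) meet at $P^\beta_6$ on the $\xi_1$-axis. The state behind them is $\bm{v}^\beta_\sigma$, which has $v^\beta_{2\sigma}<0$, so the flow is directed toward the symmetry plane. I would compute the uniform state behind the resulting shock $S^\beta_R$ by treating $\bm{v}^\beta_\sigma$ as the incoming state and applying the shock polar \eqref{u1v1polar} in the frame adapted to the line of $S^\beta_R$. The new flow must satisfy the slip condition \eqref{bcsym} on $\{\xi_2=0\}$, so $v_2=0$ there. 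Imposing continuity of $\phi$ across $S^\beta_R$ then gives an explicit line through $P^\beta_6$. Since every shock is characteristic for the Chaplygin gas, this line is tangent to both $\mathcal{C}^\beta_\sigma$ and $\mathcal{C}^{\beta'}_\sigma$ at one point $P^\beta_R$. I would verify this tangency exactly as in Lemma \ref{lemma22}: the intersection equation reduces to $|\phi^\beta_\sigma|=|\phi^{\beta'}_\sigma|$, and I would prove positivity of both potentials on the relevant arcs, using $\phi^\beta_\sigma>0$ on $\mathcal{C}^\beta_\sigma\cap\{\xi_2\ge0,\xi_1\le0\}$ from Lemma \ref{lemma22} together with continuity in $\beta$. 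The non-concentration condition \eqref{restict} for this second shock should follow from the smallness of $|v^\beta_{2\sigma}|$ when $\beta$ is close to $\beta_c$, since $v^{\beta_c}_{2\sigma}=0$.

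\emph{Step 2: uniform flow in $U\setminus\Omega$.} The region $U\setminus\Omega$ has two parts. The first lies between $S^\beta_{ob}$, $\Gamma_{wing}$ and the arc of $\mathcal{C}^\beta_\sigma$ from $P^\beta_4$ to $P^\beta_R$; there the state is $\bm{v}^\beta_\sigma$. The second lies between $S^\beta_R$, the $\xi_1$-axis and the arc of $\mathcal{C}^{\beta'}_\sigma$ from $P^\beta_R$ to $P^\beta_7$; there the state is $\bm{v}^{\beta'}_\sigma$. By hyperbolicity outside the respective Mach cones, the disturbances generated by the other boundaries cannot reach these parts. This uses domain of dependence, as in Section \ref{sub2.1}.

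\emph{Step 3: consistency of the configuration, which defines $\beta_0$.} The configuration is consistent only while $P^\beta_6$ lies outside the cone $\mathcal{C}^\beta_\sigma$ in the appropriate sense. Concretely, the Mach cone $\mathcal{C}^\beta_\sigma$ must meet the wing at $P^\beta_4$ before the reflected shock structure interacts with $\Gamma_{wing}$. In the triangle $OP^\beta_5P^\beta_6$, the angle at $P^\beta_6$ between the $\xi_1$-axis and $\Gamma_{wing}$ has sine $\frac{|OP^\beta_4|}{|OP^\beta_6|}$ when $P^\beta_4$ is the foot point. So the critical configuration is $\sin\beta=|OP^\beta_4|/|OP^\beta_6|$.

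I would define $g(\beta)=\sin\beta-|OP^\beta_4|/|OP^\beta_6|$. At $\beta=\beta_c$, the point $P^\beta_6=P_2$ lies on $\mathcal{C}_\infty$ and therefore well outside $\mathcal{C}^\beta_\sigma$, so $g(\beta_c)<0$. I would then show that $g$ is continuous, using the continuity of the shock polar and of the points involved, and that it becomes nonnegative before $\pi/2$. The second claim holds because $|OP^\beta_6|$ decreases as $\beta\to\pi/2$. Hence the minimum in \eqref{debeta0} exists and exceeds $\beta_c$, and for $\beta\in(\beta_c,\beta_0]$ Steps 1 and 2 apply.

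The main obstacle is Step 1, specifically the positivity argument that reduces the intersection of $\mathcal{C}^\beta_\sigma$ and $\mathcal{C}^{\beta'}_\sigma$ to the single line $S^\beta_R$ and the requirement that the tangency point $P^\beta_R$ stay in the upper half-plane. I would first try to obtain both for $\beta$ near $\beta_c$ by perturbation, then propagate them up to $\beta_0$ by continuity, using the fact that the first failure would have to occur at a degenerate tangency.
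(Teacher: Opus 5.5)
Your Step 3 follows the paper's route: an intermediate-value argument in $\beta$ for the sign of $g(\beta)=\sin\beta-|OP^\beta_4|/|OP^\beta_6|$ (equivalently, comparing $\angle P^\beta_4P^\beta_6O$ with $\beta$), with $\beta_0$ the least zero. As written, however, it does not establish what it needs.

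First, you never explain why $g=0$ is the critical configuration. Your criterion ``$P^\beta_6$ outside $\mathcal{C}^\beta_\sigma$'' is not the right one: it holds for every $\beta>\beta_c$, because $P^\beta_6$ lies on $S^\beta_{ob}$, which touches $\mathcal{C}^\beta_\sigma$ only at $P^\beta_1\neq P^\beta_6$. The missing fact is that $\mathcal{C}^\beta_\sigma$ meets $\Gamma_{wing}$ orthogonally at $P^\beta_4$. The normal to \eqref{csigma} is parallel to $\phi^\beta_\sigma(v^\beta_{1\sigma},v^\beta_{2\sigma})-\bm{\xi}$. Its $\bm{\nu}_w$-component vanishes on $\Gamma_{wing}$, by the slip relation $v^\beta_{1\sigma}\cos\beta+v^\beta_{2\sigma}\sin\beta=0$ and $\bm{\xi}\cdot\bm{\nu}_w=0$. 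Consequently the tangency point $P^\beta_R$ of $S^\beta_R$ can leave $\triangle OP^\beta_5P^\beta_6$ through the wing only at $P^\beta_4$. At that moment $S^\beta_R$ is the tangent at $P^\beta_4$, hence perpendicular to $\Gamma_{wing}$. So $P^\beta_4$ is the foot of the perpendicular from $P^\beta_6$, i.e.\ $\sin\beta=|OP^\beta_4|/|OP^\beta_6|$. This fact, together with the equivalence $g<0\iff P^\beta_R\in\triangle OP^\beta_5P^\beta_6$, is what lets you conclude uniformity in $U\setminus\Omega$ on $(\beta_c,\beta_0]$.

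Second, both endpoint signs rest on wrong reasoning.

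At $\beta_c$, the point $P_2=P^{\beta_c}_6$ is not outside $\mathcal{C}^{\beta_c}_\sigma$. By Lemma \ref{lemma22}, $S^{\beta_c}_{ob}$, $\mathcal{C}^{\beta_c}_\sigma$ and $\mathcal{C}_\infty$ are all tangent there. Moreover, ``$P_6$ well outside the cone'' would make $|OP_4|/|OP_6|$ small, which pushes toward $g>0$, not $g<0$. The correct reason is that $v^{\beta_c}_{1\sigma}=v^{\beta_c}_{2\sigma}=0$, so \eqref{csigma} is a circle centred at $O$ through $P_2$. This gives $|OP^{\beta_c}_4|=|OP^{\beta_c}_6|$ and hence $g(\beta_c)=\sin\beta_c-1<0$.

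Near $\pi/2$, a decreasing $|OP^\beta_6|$ would enlarge $|OP^\beta_4|/|OP^\beta_6|$, so it cannot yield $g\ge 0$. In fact $|OP^\beta_6|$ ends up larger than its value $|OP_2|$ at $\beta_c$. The correct reason runs as follows:
\begin{itemize}
\item $\theta_n\to0$, so the shock polar gives $\bm{v}^\beta_\sigma\to\bm{v}_\infty$ and $\mathcal{C}^\beta_\sigma\to\mathcal{C}_\infty$.
\item $\Gamma_{wing}$ tends to the negative $\xi_1$-axis, so $|OP^\beta_4|\to|OP_2|=\cot\sigma\sin\beta_c$.
\item $P^\beta_5=\cot\sigma(-\sin\beta,\cos\beta)\to(-\cot\sigma,0)$. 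Since $P^\beta_6$ lies on $S^\beta_{ob}$ between $P^\beta_5$ and $P^\beta_1$, it is dragged to the same limit, so $|OP^\beta_6|\to\cot\sigma$.
\end{itemize}
Hence $|OP^\beta_4|/|OP^\beta_6|\to\sin\beta_c<1$, and $g>0$ near $\pi/2$; this is the paper's statement that $\angle P^\beta_4P^\beta_6O\to0$.

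With these repairs, your continuity argument and the choice of $\beta_0$ as the least zero go through as in the paper.
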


\begin{proof}  
    Let us first give the necessary explanation for the geometric relations in the structures when $\beta>\beta_c$. Recall that $\mathcal{C}^\beta_\sigma$ is determined by the uniform flow behind $S^\beta_{ob}$, and this flow satisfies the slip boundary condition \eqref{bcwing} on the conical wing $\mathcal{W}^\beta_\sigma$. Thus, the curve $\mathcal{C}^\beta_\sigma$ is perpendicular to $\Gamma_{wing}$ at $P^\beta_4$ in conical coordinates. Also note that if there exists a critical angle $\beta_0$ such that $S^\beta_R$ exists and is perpendicular to $\Gamma_{wing}$, then the point $P^\beta_R$ must coincide with $P^\beta_4$ and $\angle P^\beta_4P^\beta_6O=\beta_0$ (cf. Figure \ref{fgbetaeqbeta0}). Hence, the relation $\angle P^\beta_4P^\beta_6O>\beta$ is equivalent to the tangent point $P^\beta_R$ of $S^\beta_R$ and $\mathcal{C}^\beta_\sigma$ lying inside the $\Delta OP^\beta_5P^\beta_6$. Note that, as mentioned in Remark \ref{reserre}, it is difficult to calculate the variation of angle $\angle P^\beta_4P^\beta_6O$ directly, we thus analyze the critical cases with the continuous variation of $\angle P^\beta_4P^\beta_6O$ with respect to $\beta$.
 
    We then consider the variation of angle $\angle P^\beta_4P^\beta_6O$ as $\beta$ increases from $\beta_c$ to $\pi/2$. For the critical case $\beta=\beta_c$, $P^\beta_6$ coincides with $P^\beta_1$ and $P_2$ so that the line segment $P^\beta_6P^\beta_4$ inside $\mathcal{C}^\beta_\sigma$ makes $\angle P^\beta_4P^\beta_6O>\beta$. In the other limit case $\beta\to\pi/2$, we notice that $\angle P^\beta_4P^\beta_6O\to 0$ with $\angle P^\beta_4P^\beta_6O<\beta$. Thus, there must be at least a $\beta\in(\beta_c,\pi/2)$ so that $\angle P^\beta_4P^\beta_6O=\beta$ since $\angle P^\beta_4P^\beta_6O$ is a continuous function of $\beta$, where $\beta=\arcsin(|OP^\beta_4|/|OP^\beta_6|)$. We denote by $\beta_0$ the minimum of these angles; that is,
    $\beta_0= \min\{\beta~|~\beta=\arcsin(|OP^\beta_4|/|OP^\beta_6|)\}$, so that $\angle P^\beta_4P^\beta_6O>\beta$ when $\beta\in(\beta_c,\beta_0)$ for the continuity with respect to $\beta$. The proof is complete. 
\end{proof}

\begin{remark}\label{reserre}\rm
   To our knowledge, for the Chaplygin gas, the global flow field structures of previously studied problems can all be reduced to two dimensions and then obtained by the relationship that oblique shock waves are tangent to the sonic circles determined by the states on both sides of them; see \cite{CQ12SCM,CQu12,CCW22,Serre09} for the two-dimensional Riemann problem and the problem of two-dimensional shock wave reflection. In our study, although the problem has a self-similar structure and can thus be reduced to two dimensions, it is impossible to find a two-dimensional plane such that the corresponding curves of Mach cones $\mathcal{C}_\infty,~\mathcal{C}^\beta_\sigma$ and $\mathcal{C}^{\beta'}_\sigma$ on it are all circles. Thus, the geometric method developed in \cite{Serre09} by Serre is not available here, and the uniqueness of $\beta_0$ is hard to obtain although intuitively it is correct.\qed
\end{remark}

We finally conclude Sections \ref{sub2.1}--\ref{sec2.2} by the following proposition.
\begin{proposition}\label{uniform flow}
Assume that the incoming flow is uniform and supersonic with state \(U_\infty=(\rho_{\infty},q_{\infty})\), and that the conical wing \(\mathcal{W}^\beta_\sigma\) is given by \eqref{sweep wing}. Then there exists a critical attack angle \(\alpha_0 = \alpha_0(\rho_\infty, q_\infty) \in (0, \pi/2)\) such that for any fixed \(\alpha \in (0, \alpha_0)\), there exists a critical sweep angle \(\sigma_0 = \sigma_0(\rho_\infty, q_\infty, \alpha) \in (0, \pi/2)\) such that for any fixed \(\sigma \in (0, \sigma_0)\), there exists a critical anhedral angle \(\beta_0 = \beta_0(\rho_\infty, q_\infty, \alpha, \sigma) \in (0, \pi/2)\) with the property that for all \(\beta \in (0, \beta_0]\), the flow is uniform in the domain \(U\setminus\Omega\) (see Figures $\ref{fgbetageq0}$--$\ref{fgbetaeqc}$ and $\ref{fgbetageqbetac}$--$\ref{fgbetaeqbeta0}$). Here \(\alpha_0\) and \(\sigma_0\) are defined by \eqref{alpha0} and \eqref{sigma0}, respectively.
\end{proposition}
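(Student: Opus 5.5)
The proposition is a summary of Sections \ref{sub2.1}--\ref{sec2.2}, so the proof assembles the preceding results in order. There are three steps.

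\emph{Step 1: $\alpha_0$ and $\sigma_0$.} I take $\alpha_0$ from \eqref{alpha0} and $\sigma_0$ from \eqref{sigma0}. The non-concentration condition is \eqref{restict}. We showed that $q_\infty\sin\alpha\cos\beta$ is nonincreasing in $\beta$ and that $\tilde q_\infty$ is nondecreasing in $\beta$. Hence both inequalities in \eqref{restict} hold for every $\beta\in[0,\pi/2)$ as soon as they hold at $\beta=0$. At $\beta=0$ they reduce to $\alpha<\alpha_0$ and $\sigma<\sigma_0$, with $\alpha_0$ depending only on $U_\infty$ and $\sigma_0$ on $(U_\infty,\alpha)$. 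For such $\alpha,\sigma$ the shock polar \eqref{u1v1polar} gives a well-defined downstream state $\bm v^\beta_\sigma$ via \eqref{v1sigma}--\eqref{v3sigma}. It is uniform, its potential $\Phi^\beta_\sigma$ satisfies \eqref{second order}, \eqref{continuec} and \eqref{bcwing}, and its Mach curve $\mathcal C^\beta_\sigma$ is \eqref{csigma}.

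\emph{Step 2: the range $\beta\in(0,\beta_c]$.} Lemma \ref{lemma22} and the remark after it give the following facts. $P^\beta_5$ lies beyond $P^\beta_0$, so the shock stays attached. $\mathcal C^\beta_\sigma$ is inscribed in $\mathcal C_\infty$ and touches it only at $P^\beta_1$. The ordering $\xi_{2P^\beta_5}>\xi_{2P^\beta_0}>\xi_{2P^\beta_4}$ holds. In $U\setminus\Omega$, i.e.\ the region $OP_2P^\beta_1P^\beta_5$ minus $OP_2P^\beta_1P^\beta_4$, the flow is therefore the state $\phi^\beta_\sigma$ behind the planar shock $S^\beta_{ob}$ and outside $\mathcal C^\beta_\sigma$. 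By finite propagation (hyperbolicity behind the sonic-normal shock), this region is undisturbed by the apex, so the flow there is uniform. At $\beta=\beta_c$ the curved shock disappears, and the same argument applies with $P^\beta_1=P_2$.

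\emph{Step 3: the range $\beta\in(\beta_c,\beta_0]$.} I use the preceding lemma, which defines $\beta_0$ by \eqref{debeta0} through continuity of $\angle P^\beta_4P^\beta_6O$ in $\beta$. For $\beta$ in this range the reflected shock $S^\beta_R$ is tangent to $\mathcal C^\beta_\sigma$ inside $\triangle OP^\beta_5P^\beta_6$. The region $U\setminus\Omega$ then splits into two parts: one behind $S^\beta_{ob}$ carrying $\phi^\beta_\sigma$, and one behind $S^\beta_R$ carrying the second uniform state. That second state is computed, as in Section \ref{sub2.1}, by the shock polar applied to the state $\phi^\beta_\sigma$ across $S^\beta_R$; the symmetry condition \eqref{bcsym} fixes its $\xi_2$-component. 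Finally, $\beta_0\in(0,\pi/2)$ because $\beta_0>\beta_c>0$ and the minimum is attained below $\pi/2$.

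\emph{Main obstacle.} The delicate point is Step 3. I must confirm that the second state is admissible: no concentration occurs, meaning the analogue of \eqref{restict} holds for the state $\phi^\beta_\sigma$ relative to the direction of $S^\beta_R$. I must also confirm that $\mathcal C^{\beta'}_\sigma$ stays inside $\mathcal C^\beta_\sigma$, touching it only at $P^\beta_R$. My first attempt is to reuse the sign argument of Lemma \ref{lemma22}: show positivity of $\phi^\beta_\sigma$ and of the new potential on the relevant arcs, so that the intersection equation reduces to the linear equation of $S^\beta_R$. Where explicit monotonicity in $\beta$ fails, as noted in Remark \ref{reserre}, I fall back on continuity in $\beta$ starting from $\beta_c$.
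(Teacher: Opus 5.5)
Your proposal follows the paper's proof. It assembles Lemma \ref{lemma22} and the subsequent lemma defining $\beta_0$ via \eqref{debeta0}, and its only new ingredient is the uniform state behind $S^\beta_R$, which the paper obtains as you describe: it applies the shock polar \eqref{u1v1polar} to $\bm{v}^\beta_\sigma$ in a basis adapted to the line $L_R$ where $S^\beta_{ob}$ meets the symmetry plane, with $u_0=\tilde q^\beta_\sigma$, $c_0=c^\beta_\sigma$, $\theta=\theta'_n$, so that the downstream velocity has no $x_2$-component. The admissibility checks you list as the main obstacle (no concentration across $S^\beta_R$, and $\mathcal{C}^{\beta'}_\sigma$ lying inside $\mathcal{C}^\beta_\sigma$) are not carried out in the paper either, so nothing is missing relative to the paper's argument.
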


\begin{proof}    
We first recall that for any fixed $\alpha \in(0, \alpha_0)$ and $\sigma \in (0, \sigma_0]$, $U$ denotes the domain $OP_2P^\beta_1P^\beta_5$, and $\Omega$ denotes the domain $OP_2P^\beta_1P^\beta_4$ when $\beta\in(0,\beta_c]$; while $U$ denotes the domain $OP^\beta_5P^\beta_6$, and $\Omega$ denotes the domain $OP^\beta_4 P^\beta_R P^\beta_7$ when $\beta\in(\beta_c,\beta_0]$. Moreover, the global structures of shocks in conical coordinates have been established; see Figures \ref{fgbetageq0} and \ref{fgbetaeqc} for $\beta\in(0,\beta_c]$, and Figures \ref{fgbetageqbetac} and \ref{fgbetaeqbeta0} for $\beta\in(\beta_c,\beta_0]$.
    
Note that, for the case $\beta\in(0,\beta_c]$, the uniform flow behind the oblique shock $S^\beta_{ob}$ is calculated in Section \ref{sub2.1}. Then, it remains to calculate the state of uniform flow behind $S^\beta_R$ (i.e., the domain $P^\beta_6 P^\beta_7 P^\beta_R$ in Figures \ref{fgbetageqbetac} and \ref{fgbetaeqbeta0}) to complete the proof.

For the symmetry of shock patterns, the velocity of uniform flow behind $S^\beta_R$ can be denoted by
\begin{align*}
    \bm{v}^\beta_R=v^\beta_{1R}\bm{e}_1+v^\beta_{3R}\bm{e}_3.
\end{align*}
Then we consider the intersection line of $S^\beta_{ob}$ and $x_1Ox_3$-plane denoted by $L_R$,
\begin{align*}
    L_R:~ x_1+\frac{v^\beta_{3\sigma}-v_{3\infty}}{v^\beta_{1\sigma}-v_{1\infty}}x_3=0.
\end{align*}
As the analysis in Section \ref{sub2.1}, we decompose $\bm{v}^\sigma_\beta$ in the $x_1Ox_3$-plane with $L_R$ as the new leading edge. Let us introduce a new orthogonal basis:
\begin{align*}
		&\bm{e}'_i\doteq(0,-1, 0), \\
        &\bm{e}'_j\doteq(-\frac{v^\beta_{3\sigma}-v_{3\infty}}{v^\beta_{1\sigma}-v_{1\infty}}, 0, 1), \\&\bm{e}'_k\doteq(\frac{v^\beta_{1\sigma}-v_{1\infty}}{v^\beta_{3\sigma}-v_{3\infty}}, 0, 1).
	\end{align*}
Clearly, $\bm{v}^\beta_\sigma$ can be decomposed as
\begin{align*}
    \bm{v}^\beta_\sigma=-v^\beta_{2\sigma}\bm{e}'_i+(v^\beta_{3\sigma}-v^\beta_{1\sigma}\frac{v^\beta_{3\sigma}-v_{3\infty}}{v^\beta_{1\sigma}-v_{1\infty}})\bm{e}'_j+(v^\beta_{3\sigma}+v^\beta_{1\sigma}\frac{v^\beta_{1\sigma}-v_{1\infty}}{v^\beta_{3\sigma}-v_{3\infty}})\bm{e}'_k.
\end{align*}
Corresponding to Section \ref{sub2.1}, we set
\begin{align*}
    \tilde{\bm{v}}^\beta_\sigma=-v^\beta_{2\sigma}\bm{e}'_i+(v^\beta_{3\sigma}+v^\beta_{1\sigma}\frac{v^\beta_{1\sigma}-v_{1\infty}}{v^\beta_{3\sigma}-v_{3\infty}})\bm{e}'_k
\end{align*}
to denote the velocity component of the flow perpendicular to $L_R$ with
\begin{align*}
    \tilde{q}^\beta_\sigma=\sqrt{(-v^\beta_{2\sigma})^2+(v^\beta_{3\sigma}+v^\beta_{1\sigma}\frac{v^\beta_{1\sigma}-v_{1\infty}}{v^\beta_{3\sigma}-v_{3\infty}})^2},
\end{align*}
and write the angle between $\tilde{\bm{v}}^\beta_\sigma$ and $\bm{e}'_k$ as
\begin{align*}
    \theta'_n=\arctan\left(\frac{-v^\beta_{2\sigma}(v^\beta_{3\sigma}-v_{3\infty})}{v^\beta_{3\sigma}(v^\beta_{3\sigma}-v_{3\infty})+v^\beta_{1\sigma}(v^\beta_{1\sigma}-v_{1\infty})}\right).
\end{align*}
Let $q^\beta _{jR}$ be the speed of uniform flow behind $S^\beta_R$ along $\bm{e}'_k$. Moreover, noting that $\xi_{1P^\beta_6}=-(v^\beta_{3\sigma}-v_{3\infty})/(v^\beta_{1\sigma}-v_{1\infty})$, we get the uniform flow with 
\begin{align*}
    &v^\beta_{1R}=v^\beta_{3\sigma}\xi_{1P^\beta_6}+v^\beta_{1\sigma}\xi^2_{1P^\beta_6}-\frac{1}{\xi_{1P^\beta_6}}q^\beta_{jR},\\
  & v^\beta_{3R}=v^\beta_{3\sigma}+v^\beta_{1\sigma}\xi_{1P^\beta_6}+q^\beta_{jR},
\end{align*}
 where $q^\beta_{jR}$ can be derived from \eqref{u1v1polar} with the choice $u_0 = \tilde{q}^\beta_\sigma, c_0 = c^\beta_\sigma$ and $\theta = \theta'_n$. Thus, we get
\begin{align*}   S^\beta_R:~v^\beta_{1R}\xi_1+v^\beta_{3R}=v^\beta_{1\sigma}\xi_1+v^\beta_{2\sigma}\xi_2+v^\beta_{3\sigma},
\end{align*}
and
\begin{align*}
    \mathcal{C}^{\beta'}_\sigma:~~(v^\beta_{1R}\xi_1+v^\beta_{3R})^2=1+|\bm{\xi}|^2
\end{align*}
corresponding to the Mach cone formed by the uniform flow behind $S^\beta_R$.
\end{proof}
 
 \subsection{Simplification of Problem A}
In this subsection, to rigorously reduce the initial Problem A into a boundary value problem in the conical coordinates, we first analyze the governing equation. Introducing the following notation
\begin{align*}
\mathrm{D}^2\varphi[\bm{a},\bm{b}]\doteq\sum^2_{i,j=1}a_ib_j\partial_{ij}\varphi\quad \text{for} ~\varphi\in C^2~\text{and}~\bm{a},\bm{b}\in \mathbb{R}^2,
\end{align*}
and utilizing \eqref{second order} and \eqref{scaling}, the equation for $\phi$ takes the form 
\begin{align}\label{D2conicalpotential}
    c^2(\Delta\phi+\rmD^2\phi[\bm{\xi},\bm{\xi}])-\rmD^2\phi[\rmD\phi-(\phi-\rmD\phi\cdot\bm{\xi})\bm{\xi}, \rmD\phi-(\phi-\rmD\phi\cdot\bm{\xi})\bm{\xi}]=0,
\end{align}
where $\Delta$ and $\rmD$ denote the Laplacian and the gradient operator with respect to $\bm{\xi}$.

The classification of equation \eqref{D2conicalpotential} is governed by
\begin{align}\label{L2}
    L^2\doteq\frac{|\rmD\phi|^2+|\phi-\rmD\phi\cdot\bm{\xi}|^2-\frac{\phi^2}{1+|\bm{\xi}|^2}}{c^2},
\end{align}
with $c^2=|\mathrm{D}\phi|^2+|\phi-\mathrm{D}\phi\cdot\bm{\xi}|^2-1$. Since equation \eqref{D2conicalpotential} is hyperbolic for $L^2 > 1$, parabolic degenerate for $L^2 = 1$, and elliptic for $L^2 < 1$, the expression \eqref{L2} implies that it is hyperbolic in the region $U\setminus \Omega$ and parabolic degenerate on the boundary $\Gamma_{cone}^{\infty}\cup\Gamma_{cone}^{\beta}$.

We remark here that as $\beta$ varies, the domains corresponding to $U$ and $\Omega$, along with the degenerate curve corresponding to $\Gamma_{cone}^{\infty}\cup\Gamma_{cone}^{\beta}$, also change (see Section \ref{sec2.2}). By abuse of notation but without misunderstanding, throughout this paper, we consistently denote $\Omega$ as the elliptic domain corresponding to disturbed flow, $U\setminus \Omega$ as the hyperbolic domain corresponding to uniform flow, and $\Gamma_{cone}^{\infty}\cup\Gamma_{cone}^{\beta}$ as the degenerate curve separating them.

We now proceed to consider the boundary conditions. In view of the positivity of potential functions established in Lemma $\ref{lemma22}$, the boundary condition for parabolic degeneracy on $\Gamma_{cone}^{\infty}\cup\Gamma_{cone}^{\beta}$ is equivalent to imposing the Dirichlet condition 
\begin{align}\label{phicon}
    \phi=\sqrt{1+|\bm{\xi}|^2}.
\end{align}
Moreover, in the conical coordinates, the slip conditions \eqref{bcwing}--\eqref{bcsym} can be reformulated as:
\begin{align}
\rmD\phi\cdot\bm{\nu}_w=0\quad &\text{on}~\Gamma_{wing},\label{conicalbcwing}\\
\rmD\phi\cdot\bm{\nu}_{sy}=0\quad &\text{on}~\Gamma_{sym},\label{connialbcsym}
\end{align}
where $\bm{\nu}_w=(\cos\beta,\sin\beta)$ and $\bm{\nu}_{sy}=(0,-1)$ are the exterior unit normals to $\Gamma_{wing}$ and $\Gamma_{sym}$, respectively. 

Accordingly, combined with Proposition \ref{uniform flow}, Problem A can be simplified as follows in the conical coordinates.
%%%%%%%%%%%%%%%%%%%%%%%%%%%%%%%%%%%%
\vspace{\baselineskip} % 段前间距    
\begin{center}    
\fbox{\begin{varwidth}{0.9\linewidth} % 设置宽度为行宽的80%        
\centering \textbf{Problem B}: For the wing $\mathcal{W}^\beta_\sigma$ and the incoming flow $U_{\infty}$ given by (\ref{sweep wing}) and (\ref{incoming flow}) respectively, find a solution to \eqref{D2conicalpotential} in the region $\Omega$ with the Dirichlet condition (\ref{phicon}) on $\Gamma_{cone}^{\infty}\cup\Gamma_{cone}^{\beta}$ and slip conditions \eqref{conicalbcwing}--\eqref{connialbcsym}. % 内容居中    
\end{varwidth}}    
\end{center}    
\vspace{\baselineskip} % 段后间距
%%%%%%%%%%%%%%%%%%%%%%%%%%%%%%%%%%%%% 

Henceforth, to show Theorem \ref{thm: Main}, it suffices to prove the following theorem. 
\begin{theorem}\label{thm4}
Let the angles \(\alpha_{0},\sigma_{0},\alpha,\sigma\) and \(\beta_{0}\) be defined as in Proposition \(\ref{uniform flow}\). Then, for any fixed \(\beta\in(0,\beta_{0}]\), Problem B admits a solution \(\phi\) with the following regularity:
\begin{equation*}
\phi\in \mathrm{Lip}(\overline{\Omega})\cap C^{1,\kappa}(\overline{\Omega}\setminus\overline{\Gamma_{cone}^{\infty}\cup\Gamma_{cone}^{\beta}})\cap C^{\infty}(\Omega\cup\Gamma_{sym}\cup\Gamma_{wing}),
\end{equation*}
where \(\kappa=\kappa(\alpha,\sigma,\beta)\in(0,1)\) is a constant. Moreover, \(\phi\) satisfies the ellipticity condition
\begin{equation}\label{ellipticcondition}
\phi >\sqrt{1+|\bm{\xi}|^{2}} \quad\text{in } \overline{\Omega}\setminus\overline{\Gamma_{cone}^{\infty}\cup\Gamma_{cone}^{\beta}}.
\end{equation}
\end{theorem}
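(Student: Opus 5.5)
We prove Theorem \ref{thm4} by a viscosity regularization combined with the continuity method, following the strategy of \cite{LY22} for the delta wing ($\beta=0$) and taking the degenerate Dirichlet problem as the target.

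\textbf{Step 1: regularization and cut-off.} Fix $\beta\in(0,\beta_0]$. Write \eqref{D2conicalpotential} as $\sum_{ij}a_{ij}(\bm{\xi},\phi,\rmD\phi)\partial_{ij}\phi=0$. Set $\psi\doteq\phi-\sqrt{1+|\bm{\xi}|^2}$, so the target ellipticity condition \eqref{ellipticcondition} reads $\psi>0$, and $L^2<1$ exactly when $\psi>0$ (using \eqref{L2} and the positivity of $\phi$ from Lemma \ref{lemma22}). We introduce two parameters. The first is a viscosity $\mu>0$, added as $\mu\Delta\phi$, which makes the equation uniformly elliptic up to $\Gamma^\infty_{cone}\cup\Gamma^\beta_{cone}$. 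The second is a truncation $\epsilon>0$, under which the coefficients are evaluated with $\psi$ replaced by $\max\{\psi,\epsilon\}$ (or a smooth version of it). This gives a uniformly elliptic quasilinear problem $(F_\mu)$ on $\Omega$, with Dirichlet data \eqref{phicon} on the cone arcs and oblique conditions \eqref{conicalbcwing}--\eqref{connialbcsym} on $\Gamma_{wing}$ and $\Gamma_{sym}$.

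\textbf{Step 2: a priori estimates, existence for fixed $\mu$, and positivity.} The corners $O$ (angle $\pi/2-\beta$ between $\Gamma_{wing}$ and $\Gamma_{sym}$), $P_2$ or $P^\beta_7$, and $P^\beta_4$ or $P^\beta_R$ are handled by the mixed-boundary H\"older theory of Lieberman. The two oblique conditions meet at angle $<\pi$, and at the Dirichlet--oblique corners the curve $\mathcal{C}^\beta_\sigma$ meets $\Gamma_{wing}$ perpendicularly, as noted in the proof of the $\beta_0$ lemma. This yields $C^{1,\kappa}$ up to the corners with weights. We then deform continuously from a problem with a known solution (for instance the uniform state or the $\beta=0$ solution of \cite{LY22}) to $(F_\mu)$ via Leray--Schauder degree, which requires uniform $C^{1,\kappa}$ bounds along the homotopy. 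Next we show $\psi>0$ in $\Omega$ by the maximum principle applied to the linear equation satisfied by $\psi$. The comparison functions are $\sqrt{1+|\bm{\xi}|^2}$, which is a subsolution modulo lower-order terms since it solves the degenerate equation, and the uniform-state potentials $\phi_\infty,\phi^\beta_\sigma$ as supersolutions; on $\Gamma_{wing}$ and $\Gamma_{sym}$ the Hopf lemma rules out boundary minima. Since $\psi>0$, the truncation is inactive once $\epsilon$ is below a $\mu$-dependent bound.

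\textbf{Step 3: the main obstacle, a Lipschitz bound uniform in $\mu$.} We expect the gradient bound independent of $\mu$ near the degenerate arcs to be the hardest step. Our approach is a barrier argument at each point of $\Gamma^\infty_{cone}\cup\Gamma^\beta_{cone}$. From below, $\psi\ge0$. From above, we use $w=\sqrt{1+|\bm{\xi}|^2}+Kd$, where $d$ is the distance to the arc; equivalently we compare with $\phi_\infty$ or $\phi^\beta_\sigma$, which dominate $\sqrt{1+|\bm{\xi}|^2}$ near the arcs and vanish to first order there, since the cones are tangent to the shock at $P^\beta_1$. Interior Lipschitz bounds follow from the maximum principle for $\partial_e\phi$, which satisfies a linear equation in divergence-like form. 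The oblique boundaries are treated by reflection across $\Gamma_{sym}$ and $\Gamma_{wing}$, which is possible because the conditions are homogeneous Neumann on straight lines. The remaining worry is the corners where a cone arc meets $\Gamma_{wing}$ or the $\xi_1$-axis. There we would construct barriers in local polar coordinates, exploiting the right-angle meeting.

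\textbf{Step 4: passing to the limit $\mu\to0$.} With uniform Lipschitz bounds, Arzel\`a--Ascoli gives a subsequence $\phi_\mu\to\phi$ in $C^0(\overline{\Omega})$. On any compact set avoiding the degenerate arcs, $\psi_\mu\ge\delta>0$ uniformly in $\mu$. This follows from a $\mu$-independent positive lower barrier (a subsolution built from the strict ellipticity gap) or from a Harnack-type argument. Consequently the equation is uniformly elliptic there, and Schauder and corner estimates give $C^{1,\kappa}$ and $C^\infty$ bounds independent of $\mu$. The limit $\phi$ therefore solves \eqref{D2conicalpotential} with the stated regularity and satisfies \eqref{phicon}, \eqref{conicalbcwing} and \eqref{connialbcsym}. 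Strict inequality \eqref{ellipticcondition} follows from the strong maximum principle applied to $\psi\ge0$, with Hopf's lemma on $\Gamma_{wing}$ and $\Gamma_{sym}$.
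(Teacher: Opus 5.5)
Your route differs from the paper's. The paper adds no Laplacian: it lifts the Dirichlet data to $\sqrt{1+|\bm{\xi}|^2}+\varepsilon$, homotopes in the coefficient $\mu$ of the nonlinear term, and proves openness via the substitution $\mathrm{arccosh}(\phi/\sqrt{1+|\bm{\xi}|^2})$. The problem is not the route but that the two estimates carrying the proof are not established, and one claim is false.

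\emph{The truncation claim.} The Dirichlet data give $\psi=0$ on $\Gamma_{cone}^{\infty}\cup\Gamma_{cone}^{\beta}$. So $\{\psi<\epsilon\}$ contains a neighbourhood of these arcs for every $\epsilon>0$, and the truncation is \emph{never} inactive. Your $(F_\mu)$-solutions therefore solve a modified equation exactly where Step 3 is needed. There the maximum principle for $\partial_e\phi$ is lost. It relies on the conical equation being $x_3$ times a 3D equation whose coefficients depend only on $\nabla_{\bm{x}}\Phi$, i.e.\ on translation invariance, and coefficients depending explicitly on $\psi=(\Phi-|\bm{x}|)/x_3$ destroy it. Truncating at level $0$ instead would repair this. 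The frozen matrix at $(\sqrt{1+|\bm{\xi}|^2},\rmD\sqrt{1+|\bm{\xi}|^2})$ vanishes, so $\mu\Delta\phi\ge\mu\Delta\sqrt{1+|\bm{\xi}|^2}>0$ at a negative minimum of $\psi$ gives $\psi\ge0$. Moreover $\mu\Delta_{\bm{\xi}}\phi=\mu x_3(\Phi_{x_1x_1}+\Phi_{x_2x_2})$ keeps the invariance. Separately, neither proposed starting point of your homotopy solves the problem on $\Omega$: $\phi^\beta_\sigma<\sqrt{1+|\bm{\xi}|^2}$ on $\Gamma_{cone}^{\infty}$, and the $\beta=0$ solution lives on another domain.

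\emph{The barrier gap in Step 3.} This is the substantive gap, along $\Gamma_{cone}^{\beta}$ and, for $\beta>\beta_c$, along arcs of $\mathcal{C}^\beta_\sigma$ and $\mathcal{C}^{\beta'}_\sigma$, where $\phi_\infty$ plays no role.
\begin{itemize}
\item $\phi_\infty$ equals $\sqrt{1+|\bm{\xi}|^2}$ only on $\mathcal{C}_\infty$, so it bounds $\rmD\phi$ only on $\Gamma_{cone}^{\infty}$.
\item $\phi^\beta_\sigma$ touches along $\Gamma_{cone}^{\beta}$. But for $\beta<\beta_c$, $\Gamma_{cone}^{\infty}$ lies outside $\mathcal{C}^\beta_\sigma$, where $\phi^\beta_\sigma<\sqrt{1+|\bm{\xi}|^2}=\phi$, so it is not an upper barrier on $\Omega$.
\item $\min\{\phi_\infty,\phi^\beta_\sigma\}$ does not help either. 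The linear function $\phi^\beta_\sigma-\phi_\infty$ vanishes only on the line $S^\beta_{ob}$, which touches $\overline{\Omega}$ only at $P^\beta_1$, so the minimum is $\phi^\beta_\sigma$ throughout $\Omega$.
\item $\sqrt{1+|\bm{\xi}|^2}+Kd$ is not ``equivalent'' to these, since it is not linear, and you never check that it is a supersolution.
\item The corners are left to unspecified polar barriers.
\end{itemize}

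\emph{What the paper uses instead.} It takes a whole \emph{family} of exact solutions: all linear potentials $\bm{\eta}\cdot(\bm{\xi},1)$ (uniform flows, solutions for every value of the parameter). Each lies above the data on the entire Dirichlet boundary and satisfies $\rmD(\cdot)\cdot\bm{\nu}>0$ on $\Gamma_{wing}\cup\Gamma_{sym}$. Convexity of the $C^{1,1}$ Dirichlet arc makes their infimum equal to the data. This bounds $\rmD\phi$ there, and the gradient maximum principle, with the Neumann sides treated as interior by reflection, does the rest.

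\emph{The lower bound in Step 4.} Your bound $\psi_\mu\ge\delta$ on compacts cannot come from Harnack, whose constants depend on the ellipticity you are trying to establish. The paper gets it from the linear subsolutions $(1+\delta)(\bm{\xi}_0,1)\cdot(\bm{\xi},1)/\sqrt{1+|\bm{\xi}_0|^2}$. These lie below $\sqrt{1+|\bm{\xi}|^2}$ on the Dirichlet arcs, exceed it at $\bm{\xi}_0$, and have the right Neumann signs.

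Without these two barrier families you have neither the $\mu$-uniform Lipschitz bound nor local uniform ellipticity, so the limit in Step 4 cannot be taken.
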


\begin{remark}\rm 
As mentioned above, equation \eqref{D2conicalpotential} is hyperbolic in the domain $U\setminus \Omega$ and parabolic degenerate on the arc $\Gamma_{cone}^{\infty}\cup\Gamma_{cone}^{\beta}$. Moreover, it is shown in \cite[Lemma A.1]{L25} that \eqref{D2conicalpotential} is elliptic in the interior of a parabolic-elliptic region. Thus, it is natural to assume that equation \eqref{D2conicalpotential} is elliptic in $\overline{\Omega}\setminus\overline{\Gamma_{cone}^{\infty}\cup\Gamma_{cone}^{\beta}}$, i.e., \eqref{ellipticcondition}. \qed
\end{remark}

\begin{remark}\rm 
Due to the existence of the Neumann boundary conditions, different from \cite{LY22,Serre09}, we cannot obtain the uniqueness of the solution to Problem B by applying the comparison principle established in Lemma $\ref{lemma: comparison principle}$ below. \qed 
\end{remark}

\begin{remark}\label{differentLY22} \rm 
Note that the works in \cite{LY22,Serre09} mainly focus on the Dirichlet problem. To prove Theorem $\ref{thm4}$, owing to the anhedral angle $\beta>0$, we also need to deal with the corner point and Neumann boundary conditions. These difficulties are also briefly addressed in \cite{CQ12SCM,L25,LY22}. However, in this paper, the establishment of the $L^\infty$-estimate \eqref{eq: estimate phi} for the viscosity solution below is most crucial and different. \qed 
\end{remark}

\section{Analysis of the flow inside Mach cone}\label{sec:3}

We now present the proof strategy of Theorem \ref{thm4}. Introduce the following family of boundary-value problems:
	\begin{align}\label{eq: for Fmu}
	\sum^{2}_{i,j=1} A_{ij}(\mu, \phi) \partial_{ij}\phi\doteq c^{2}(\Delta\phi+\rmD^{2}\phi[\bm{\xi},\bm{\xi}])& \nonumber\\-\mu \rmD^{2}\phi[\rmD\phi-(\phi-\rmD\phi\cdot\bm{\xi})\bm{\xi}, &\rmD\phi-(\phi-\rmD\phi\cdot\bm{\xi})\bm{\xi}]=0\quad\text{in $\Omega$},
	\end{align}
where $\partial_{ij}\phi\doteq\partial_{\xi_i\xi_j}\phi$ with $i,j\in\{1,2\}$, and
\begin{equation}\label{eq:nianxing BC}
	\begin{cases}
		\phi=\sqrt{1+|\bm{\xi}|^2}+\varepsilon\quad&\text{on $\Gamma_{cone}^{\infty}\cup\Gamma_{cone}^{\beta}$},\\
		\rmD\phi\cdot\bm{\nu}_{w}=0\quad&\text{on $\Gamma_{wing}$},\\
		\rmD\phi\cdot\bm{\nu}_{sy}=0\quad&\text{on $\Gamma_{sym}$}.
	\end{cases}
\end{equation}
Here, $\mu\in[0,1]$ and $\varepsilon>0$ are two parameters. Let $\phi_{\mu,\varepsilon}$ be the solution to problem \eqref{eq: for Fmu}--\eqref{eq:nianxing BC}. It can be directly verified that for all $\mu\in[0,1]$, equation \eqref{eq: for Fmu} is elliptic if $\phi_{\mu,\varepsilon}>\sqrt{1+|\bm{\xi}|^2}$, and it is uniformly elliptic if
\begin{align}
	\sqrt{1+|\bm{\xi}|^2}+\delta_{0}\leq  \phi_{\mu,\varepsilon}&<C,\label{eq: estimate phi}\\
	\vert \rmD\phi_{\mu,\varepsilon}\vert&<C,\label{eq: lip estimate phi}
\end{align}
where $\delta_{0},C$ are two positive bounded constants.

In view of the above property of equation \eqref{eq: for Fmu}, we add the viscosity parameter $\varepsilon$ to the degenerate boundary condition on $\Gamma_{cone}^{\infty}\cup\Gamma_{cone}^{\beta}$ as in \eqref{eq:nianxing BC}\(_1\), which is inspired by the technique in \cite{Serre09}. In addition, the parameter $\mu$ plays a key role: equation \eqref{eq: for Fmu} is linear and uniformly elliptic at $\mu = 0$; it returns to the original nonlinear equation \eqref{D2conicalpotential} at $\mu = 1$. 

Define a set
\begin{equation*}
	\begin{aligned}
		J_{\varepsilon}\doteq\{\mu\in[0,1]:~&\phi_{\mu,\varepsilon}\in C^{1}(\overline{\Omega})\cap C^2(\Omega\cup\Gamma_{sym}\cup\Gamma_{wing}) ~\text{satisfies}~\eqref{eq: for Fmu}\text{-}\eqref{eq:nianxing BC}\\		&~\text{with}~\phi_{\mu,\varepsilon}\geq \sqrt{1+\vert\bm{\xi}\vert^2}+\varepsilon~\text{in}~\overline{\Omega}\setminus\overline{\Gamma_{cone}^{\infty}\cup\Gamma_{cone}^{\beta}}\}.
	\end{aligned}
\end{equation*}
The strategy for the proof of Theorem \ref{thm4} can be divided into two steps. To be specific, for any fixed $\varepsilon>0$, we first apply the continuity method to show the existence of the viscosity solution to the problem \eqref{D2conicalpotential} with \eqref{eq:nianxing BC}; namely, $J_{\varepsilon}=[0,1]$. A key ingredient here is establishing the uniform estimates \eqref{eq: estimate phi}--\eqref{eq: lip estimate phi} that are independent of $\mu$ and $\varepsilon$, to be demonstrated in Section \ref{sec: prior estimate}. Second, we prove that the viscosity solution $\phi_{1,\varepsilon}$ converges to a solution of Problem B as $\varepsilon \to 0^+$.

\subsection{Lipschitz estimate}\label{sec: prior estimate}
It is difficult to obtain the $L^\infty$ estimate of $\phi_{\mu,\varepsilon}$ directly because we can not establish directly a comparison principle for the equation \eqref{eq: for Fmu}. To overcome this difficulty, we introduce an auxiliary function used in \cite{LY22}: 
\begin{equation}\label{eq phi to w}
	w_{\mu,\varepsilon}\doteq \frac{\phi_{\mu,\varepsilon}}{\sqrt{1+|\bm{\xi}|^{2}}}.
\end{equation}

Due to the bounded domain $\Omega$, there is no need to differentiate $\varepsilon$ from $\sqrt{1+|\bm{\xi}|^2} \varepsilon$, and we simply write $\varepsilon$ for both in the subsequent analysis.

By \eqref{eq: for Fmu}--\eqref{eq:nianxing BC} and \eqref{eq phi to w}, the function $w_{\mu,\varepsilon}$ satisfies
\begin{multline}\label{eq for w}
	c^{2}(\Delta{w}+\rmD^{2}{w}[\bm{\xi},\bm{\xi}])-\mu(1+|\bm{\xi}|^{2})\rmD^{2}{w}[\rmD{w}+(\rmD{w}\cdot\bm{\xi})\bm{\xi}, \rmD{w}+(\rmD{w}\cdot\bm{\xi})\bm{\xi}]\\
	+2\left((1-\mu)c^{2}+\mu(w^2-1)\right)\rmD{w}\cdot\bm{\xi}+\frac{\left((2-\mu)c^2+\mu(w^2-1)\right)w}{1+|\bm{\xi}|^{2}}=0,
\end{multline}
with
\begin{equation}\label{BC for w}
	\begin{cases}
		w=1+\varepsilon\quad&\text{on $\Gamma_{cone}^{\infty}\cup\Gamma_{cone}^{\beta}$},\\
		\rmD w\cdot\bm{\nu}_{w}=0\quad&\text{on $\Gamma_{wing}$},\\
		\rmD w\cdot\bm{\nu}_{sy}=0\quad&\text{on $\Gamma_{sym}$}.
	\end{cases}
\end{equation}
Denote by $\mathcal{N}_\mu w$ the left-hand side of \eqref{eq for w}. We have the following comparison principle.

\begin{lemma}\label{lemma: comparison principle}
Let $\Omega_{d}\subset \mathbb{R}^{2}$ be a bounded open domain whose boundary $\partial\Omega_{d}$ consists of finitely many smooth arcs $\Gamma_i,\dots,\Gamma_n$. Assume that at any intersection point of $\Gamma_k$ and $\Gamma_l$, the angle formed lies in $(0,\pi]$. Consider functions ${w}_{\pm}$ belonging to $ C^{0}(\overline{\Omega_{d}})\cap C^{1}(\overline{\Omega_{d}}\setminus\overline{\Gamma_1\cup\Gamma_2})\cap C^2(\Omega_{d})$
and satisfying ${w}_{\pm}>1$ in $\Omega_{d}$. Suppose that for each $\mu\in [0,1]$, the operator $\mathcal{N}_\mu$ is locally uniformly elliptic with respect to either ${w}_{+}$ or $w_-$, and
	\begin{align*}
	\mathcal{N}_\mu w_-\geq0, \quad\mathcal{N}_\mu w_+\leq0\quad&\text{in}~\Omega_{d},\\
   w_-\leq w_+\quad&\text{on}~\Gamma_1\cup\Gamma_2,\\
     \rmD w_-\cdot\bm{\nu}_{n_j}<\rmD w_+\cdot\bm{\nu}_{n_j}\quad&\text{on}~\Gamma_j~\text{for}~j=3,\dots,n,
	\end{align*}
with $\bm{\nu}_{n_j}$ being the exterior unit normal to $\Gamma_j$. Then, ${w}_{-}\leq{w}_{+}$ holds in $\overline{\Omega_{d}}\setminus\overline{\Gamma_1\cup\Gamma_2}$.
\end{lemma}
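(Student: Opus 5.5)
We argue by contradiction, in the standard way for comparison principles of quasilinear elliptic operators with mixed Dirichlet/oblique data. Suppose $\max_{\overline{\Omega_d}\setminus\overline{\Gamma_1\cup\Gamma_2}}(w_--w_+)>0$. Since $w_\pm\in C^0(\overline{\Omega_d})$ and $w_-\le w_+$ on $\Gamma_1\cup\Gamma_2$, the positive maximum $M$ of $w_--w_+$ over the compact set $\overline{\Omega_d}$ is attained at some point $\bm{\xi}_0\notin\overline{\Gamma_1\cup\Gamma_2}$. Hence $\bm{\xi}_0$ lies in $\Omega_d$, or in the relative interior of some $\Gamma_j$ with $j\ge3$, or at a corner where two Neumann-type arcs $\Gamma_k,\Gamma_l$ ($k,l\ge3$) meet.

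\textbf{Boundary points.} Suppose first that $\bm{\xi}_0$ is an interior point of $\Gamma_j$, $j\ge3$. Then $w_--w_+$ is $C^1$ near $\bm{\xi}_0$ and attains its maximum there, so $\rmD(w_--w_+)(\bm{\xi}_0)\cdot\bm{\nu}_{n_j}\ge0$. This contradicts the strict inequality $\rmD w_-\cdot\bm{\nu}_{n_j}<\rmD w_+\cdot\bm{\nu}_{n_j}$.

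Suppose next that $\bm{\xi}_0$ is a corner. Because the interior angle lies in $(0,\pi]$, every outward direction of the form $\bm{\nu}=a\bm{\nu}_{n_k}+b\bm{\nu}_{n_l}$ with $a,b\ge0$ satisfies $-\bm{\nu}\in$ the closed tangent cone of $\overline{\Omega_d}$ at $\bm{\xi}_0$. Maximality then gives $\rmD(w_--w_+)(\bm{\xi}_0)\cdot\bm{\nu}_{n_k}\ge0$, obtained by moving along $\Gamma_l$ into the domain. I will check this carefully. If instead the gradient condition is weaker, I would use continuity of both one-sided normal derivatives up to the corner together with the strict inequalities on $\Gamma_k$ and $\Gamma_l$: since the angle is at most $\pi$, the gradient $\rmD(w_--w_+)(\bm{\xi}_0)$ would have to have nonnegative pairing with at least one of the two normals. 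That again contradicts the strict inequality extended by continuity to the corner. The angle restriction is precisely what makes this step work; for reflex angles it would fail.

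\textbf{Interior points.} If $\bm{\xi}_0\in\Omega_d$, then $\rmD w_-=\rmD w_+$ and $\rmD^2(w_--w_+)\le0$ at $\bm{\xi}_0$. To get a strict contradiction I would rather run the maximum principle on the linearized difference. Write $\mathcal{N}_\mu w_--\mathcal{N}_\mu w_+\ge0$ as
\[
a_{ij}\partial_{ij}(w_--w_+)+b_i\partial_i(w_--w_+)+c_0(w_--w_+)\ge0 .
\]
Here $a_{ij}$ is evaluated along $w_+$ (or $w_-$, whichever makes $\mathcal{N}_\mu$ locally uniformly elliptic), and the coefficients $b_i$, $c_0$ come from the mean value theorem applied to the lower-order terms of \eqref{eq for w}. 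The difficulty is the zeroth-order term. Through $c^2=(1+|\bm{\xi}|^2)(|\rmD w|^2+(\rmD w\cdot\bm{\xi})^2)+w^2-1$, its coefficient is proportional to $w\partial_w[\ldots]$, which is positive. So the sign condition for the classical maximum principle fails.

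To deal with this I follow the device of \cite{LY22,Serre09}: compare ratios rather than differences. Setting $z=w_-/w_+$ (recall $w_+>1>0$), one computes the equation for $z$. The homogeneity structure of $\mathcal{N}_\mu$ in $w$ is roughly cubic: $c^2$ is quadratic in $(w,\rmD w)$ up to the constant $-1$, and $w^2-1$ likewise. I expect this to give a zeroth-order coefficient with a favourable sign when $z>1$, thanks to the $-1$ terms and the assumption $w_\pm>1$. Alternatively one can pick $\lambda=\max w_-/w_+>1$ and compare $w_-$ with $\lambda w_+$: at a touching point, the scaled function $\lambda w_+$ is a strict supersolution, since $\mathcal{N}_\mu(\lambda w)$ exceeds $\lambda^3\mathcal{N}_\mu w$ by explicit negative terms coming from the constants. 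This gives a contradiction via the strong maximum principle or a Hopf-type argument, including at the boundary points treated above with $\lambda$-scaled normal derivatives.

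Verifying this sign computation for \eqref{eq for w}, uniformly in $\mu\in[0,1]$, is the main obstacle. The Neumann and corner cases are comparatively routine once the interior contradiction is secured.
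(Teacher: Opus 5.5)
\textbf{The interior step is a genuine gap.} Your skeleton is the paper's. A positive maximum of $w_--w_+$ cannot lie on $\overline{\Gamma_1\cup\Gamma_2}$, and the strict inequality of normal derivatives rules out the Neumann arcs. The interior step is the real content of the lemma; the paper takes it from \cite[Lemma 3.1]{LY22}. You leave it open, and the device you suggest for it fails. From $c^2=(1+|\bm{\xi}|^2)(|\rmD w|^2+(\rmD w\cdot\bm{\xi})^2)+w^2-1$ one computes exactly, for every $\lambda>0$,
\begin{equation*}
\mathcal{N}_\mu(\lambda w)=\lambda^3\mathcal{N}_\mu w+\lambda(\lambda^2-1)\Big(\Delta w+\rmD^2w[\bm{\xi},\bm{\xi}]+2\rmD w\cdot\bm{\xi}+\frac{2w}{1+|\bm{\xi}|^2}\Big).
\end{equation*}
The bracket equals $(1+|\bm{\xi}|^2)^{-1/2}(\Delta\phi+\rmD^2\phi[\bm{\xi},\bm{\xi}])$ with $\phi=\sqrt{1+|\bm{\xi}|^2}\,w$. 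So the ``constants'' do not give explicitly negative terms. They give $\lambda(\lambda^2-1)$ times the linear operator, which has no sign for a general supersolution when $\mu>0$, since the term $\mu(1+|\bm{\xi}|^2)\rmD^2w[\cdot,\cdot]$ decouples it from $\mathcal{N}_\mu w_+\le0$. It also vanishes identically for the planar solutions, because $\lambda w^{\bm{\eta}}=w^{\lambda\bm{\eta}}$ is again an exact solution. Hence $\lambda w_+$ is not a strict supersolution, and touching at $\max w_-/w_+$ gives no contradiction. Working with the ratio $z=w_-/w_+$ is the same argument and fails for the same reason.

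\textbf{The missing idea and two boundary points.} What you need is the change of unknown the paper itself uses in Step 3, $\psi=\mathrm{arccosh}\,w$; this is where the hypothesis $w_\pm>1$ enters.
\begin{itemize}
\item Then $c^2=\sinh^2\psi\,(1+m(\rmD\psi))$ and $\mathcal{N}_\mu w=\sinh^3\psi\cdot\mathcal{M}_\mu\psi$, where $\mathcal{M}_\mu\psi$ is the left-hand side of \eqref{eq: for auxiliary z}.
\item Its second- and first-order coefficients depend only on $(\bm{\xi},\rmD\psi)$. So ellipticity with respect to $w_+$ or $w_-$ is ellipticity at the common gradient.
\item $\psi$ itself enters only through $\frac{(1+m)(2+(1-\mu)m)}{(1+|\bm{\xi}|^2)\tanh\psi}$, which is strictly decreasing in $\psi$.
\item At an interior positive maximum of $\psi_--\psi_+$ one has $\rmD\psi_-=\rmD\psi_+$ and $\rmD^2\psi_-\le\rmD^2\psi_+$. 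Hence $0\le\mathcal{M}_\mu\psi_-<\mathcal{M}_\mu\psi_+\le0$, a contradiction.
\end{itemize}
If you argue this way, redo the boundary step for $\psi_--\psi_+$. Since $\rmD\psi_\pm\cdot\bm{\nu}=\rmD w_\pm\cdot\bm{\nu}/\sinh\psi_\pm$, the strict inequality transfers at points where $\psi_->\psi_+$ provided $\rmD w_+\cdot\bm{\nu}\ge0$. That holds in both uses in Lemma \ref{lemma: lip}. At a Neumann--Neumann corner, your second argument is the right one: at a maximum the gradient is a nonnegative combination of the two outward normals. Your first claim, that $-\bm{\nu}$ lies in the tangent cone, is false at acute corners. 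Even the second argument gives a contradiction only if the strict inequalities hold at the corner point itself. Continuity alone gives $\le$, which only forces $\rmD(w_--w_+)(\bm{\xi}_0)=0$.
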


\begin{proof}
	Set
	\begin{equation*}
		\bar{w}\doteq w_- -w_+.
	\end{equation*}
Then, we obtain
	\begin{equation*}
    \sup_{\Omega_d}\bar{w}\leq\sup_{\partial\Omega_d} \bar{w}\leq\sup_{\Gamma_1\cup\Gamma_2}\bar{w},
	\end{equation*}
where the first inequality follows from \cite[Lemma 3.1]{LY22}, and the second one is obtained by the condition $\rmD\bar{w}\cdot\bm{\nu}_{n_j}<0$ on $\Gamma_j$. The proof is complete.
\end{proof}

We now use the auxiliary function $w_{\mu,\varepsilon}$ and the above comparison principle to obtain the estimates \eqref{eq: estimate phi}--\eqref{eq: lip estimate phi}. 
\begin{lemma}\label{lemma: lip}
Assume that $\phi_{\mu,\varepsilon} \in C^{0}(\overline{\Omega})\cap C^{1}(\overline{\Omega}\setminus\overline{\Gamma_{cone}^{\infty}\cup\Gamma_{cone}^{\beta}})\cap C^2(\Omega\cup\Gamma_{sym}\cup\Gamma_{wing})$ is a solution to \eqref{eq: for Fmu}--\eqref{eq:nianxing BC} with $\phi_{\mu,\varepsilon}\geq \sqrt{1+|\bm{\xi}|^2}+\varepsilon$ in $\overline{\Omega}\setminus\overline{\Gamma_{cone}^{\infty}\cup\Gamma_{cone}^{\beta}}$. Then, for any $\varepsilon>0$ and $\mu\in[0,1]$, one can find a constant $C$ depending neither on $\mu$ nor $\varepsilon$ such that
    \begin{align}
	\sqrt{1+|\bm{\xi}|^2}+\varepsilon<\phi_{\mu,\varepsilon}&\leq C\quad\text{in}~\overline{\Omega}\setminus\overline{\Gamma_{cone}^{\infty}\cup\Gamma_{cone}^{\beta}},\label{eq phi inf}\\
	\|\rmD{\phi}_{\mu,\varepsilon}\|_{L^{\infty}(\Omega)}&\leq C.\label{eq for phi lip}
\end{align}
\end{lemma}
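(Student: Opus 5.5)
We will work throughout with the auxiliary function $w=w_{\mu,\varepsilon}$ of \eqref{eq phi to w}, which satisfies \eqref{eq for w}--\eqref{BC for w}. We will then transfer the bounds back to $\phi_{\mu,\varepsilon}=\sqrt{1+|\bm{\xi}|^2}\,w$; since $\Omega$ is bounded, this transfer costs only fixed constants.

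\textbf{Lower bound in \eqref{eq phi inf}.} By hypothesis $w\ge 1+\varepsilon$, so only strictness is needed. Suppose $w=1+\varepsilon$ at some point of $\overline\Omega\setminus\overline{\Gamma_{cone}^\infty\cup\Gamma_{cone}^\beta}$. That point is then a minimum. If it is interior, evaluate \eqref{eq for w} there. The gradient terms vanish and $\rmD^2 w\ge0$. Since $w>1$ gives $c^2>0$, the zeroth-order term $\big((2-\mu)c^2+\mu(w^2-1)\big)w/(1+|\bm\xi|^2)$ is strictly positive. Hence $\mathcal N_\mu w>0$, a contradiction.

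If the minimum lies on $\Gamma_{wing}$ or $\Gamma_{sym}$, use Hopf's lemma, which applies because the equation is locally uniformly elliptic where $w>1$. The homogeneous Neumann condition then forces a contradiction unless $w$ is constant, and a constant $w>1$ is not a solution because of the same positive zeroth-order term. At the corner $O$ (angle $\pi/2-\beta<\pi$) we reflect across $\Gamma_{sym}$, or use a barrier adapted to the wedge.

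\textbf{Upper bound.} We apply Lemma \ref{lemma: comparison principle} with $w_-=w$ and an explicit supersolution $w_+$. The natural candidate is $w_+=\phi^*/\sqrt{1+|\bm\xi|^2}$, where $\phi^*$ is built from the uniform states:
\begin{itemize}
\item $\phi^*=\max\{\phi_\infty,\phi^\beta_\sigma\}+M$, or
\item a linear potential $\ell(\bm\xi)=a\xi_1+b$ with $b$ large.
\end{itemize}
Linear functions $\ell$ satisfy \eqref{D2conicalpotential} exactly for every $\mu$, because $\rmD^2\ell=0$. For $w_+=\ell/\sqrt{1+|\bm\xi|^2}$ we then get $\mathcal N_\mu w_+=0$, and we need only check three things:
\begin{enumerate}
\item $w_+\ge 1+\varepsilon$ on the Dirichlet arcs, which holds for $b$ large uniformly in $\varepsilon\le1$;
\item $w_+>1$, so that local ellipticity holds;
\item the strict Neumann inequality $\rmD w\cdot\bm\nu<\rmD w_+\cdot\bm\nu$ on $\Gamma_{wing}\cup\Gamma_{sym}$.
\end{enumerate}
Since $\rmD w\cdot\bm\nu=0$ there, condition 3 amounts to $\rmD w_+\cdot\bm\nu>0$. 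I expect this step to be the main obstacle, and it is exactly the point flagged in Remark \ref{differentLY22}. A single linear function must have a normal derivative of the correct sign on two boundary pieces meeting at $O$. On $\Gamma_{sym}$ this is $-\partial_{\xi_2}w_+=0$ for $\ell=a\xi_1+b$, and it must be made strict.

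My plan is to tilt the linear function, taking $\ell=a\xi_1-\delta\xi_2+b$. We choose $\delta>0$ small and $a$ with the appropriate sign, so that $\rmD\ell\cdot\bm\nu_w>0$ and $\rmD\ell\cdot\bm\nu_{sy}=\delta>0$. The correction coming from the factor $(1+|\bm\xi|^2)^{-1/2}$ is of order $b|\bm\xi|$. Because $\Gamma_{wing}$ passes through the origin, this correction must be dominated, so the order in which $b$ and the slopes are fixed matters. If a purely linear choice fails, I would instead use $\ell=b+\eta$, where $\eta$ is a small perturbation of $\sqrt{1+|\bm\xi|^2}$ along the normals, and check the sign of $\mathcal N_\mu$ by direct computation. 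Once $w\le C$ is established, $\phi\le C$ follows.

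\textbf{Lipschitz bound \eqref{eq for phi lip}.} Following \cite{LY22,Serre09}, we first bound $|\rmD\phi|$ on the boundary and then propagate the bound inside.
\begin{itemize}
\item \emph{Dirichlet arcs.} We use barriers: $\sqrt{1+|\bm\xi|^2}+\varepsilon\le\phi\le$ the uniform-state potential $\phi_\infty$ or $\phi^\beta_\sigma$. The upper comparison again comes from Lemma \ref{lemma: comparison principle}, using that these potentials are exact solutions which coincide with $\phi$ on the Mach-cone arcs up to $\varepsilon$. This pinches the normal derivative, while the tangential derivative is known from the boundary data.
\item \emph{Interior and Neumann boundaries.} The function $\phi$ is homogeneous of degree one in $\bm x$, so each derivative $\Phi_{x_i}$ satisfies a linear elliptic equation without zeroth-order term. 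A maximum principle for $\partial_{x_i}\Phi$ (the velocity components) then bounds $|\nabla_{\bm x}\Phi|$ by its boundary values. On $\Gamma_{wing}$ and $\Gamma_{sym}$ the slip condition makes the normal velocity vanish, so no maximum can occur there by Hopf, up to reflection.
\end{itemize}
Bounded velocity together with bounded $\phi$ gives the bound on $\rmD\phi$, and all constants are independent of $\mu$ and $\varepsilon$.
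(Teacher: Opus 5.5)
\textbf{There is a genuine gap in your Lipschitz argument, at the Dirichlet boundary.}

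\emph{What works on $\Gamma_{cone}^{\infty}$.} The upper barrier $\phi_\infty+\varepsilon$ is fine there. It is affine, hence an exact solution for every $\mu$. It dominates $\phi$ on $\Gamma_{cone}^{\beta}$ because $\mathcal{C}^\beta_\sigma$ lies inside $\mathcal{C}_\infty$. Its only defect, $\rmD\phi_\infty\cdot\bm{\nu}_{sy}=0$ instead of $>0$, is removed by tilting and passing to the limit.

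\emph{What fails on $\Gamma_{cone}^{\beta}$.} Here $\phi^\beta_\sigma+\varepsilon$ is not a valid barrier. Since $\mathcal{C}^\beta_\sigma$ is inscribed in $\mathcal{C}_\infty$ and touches it only at $P^\beta_1$, the arc $\Gamma^\infty_{cone}\setminus\{P^\beta_1\}$ lies outside $\mathcal{C}^\beta_\sigma$, where $\phi^\beta_\sigma<\sqrt{1+|\bm\xi|^2}$. Hence $\phi^\beta_\sigma+\varepsilon<\phi$ on $\Gamma^\infty_{cone}\setminus\{P^\beta_1\}$, and for small $\varepsilon$ the function $w_+$ even drops below $1$ near that arc. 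Both hypotheses of Lemma \ref{lemma: comparison principle} therefore fail.

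Localizing to the part of $\Omega$ inside $\mathcal{C}^\beta_\sigma$ does not rescue it. The continuation of $\mathcal{C}^\beta_\sigma$ past $P^\beta_1$ runs into $\Omega$, and on it your barrier equals $\sqrt{1+|\bm\xi|^2}+\varepsilon<\phi$. The same obstruction appears on the inner arc when $\beta_c<\beta\le\beta_0$. Without an upper barrier touching $\phi$ at every point of $\Gamma_{cone}^{\beta}$, the normal derivative is not pinched there, and \eqref{eq for phi lip} does not follow.

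\textbf{The missing idea.} Use the whole family of exact solutions $w^{\bm\eta}$ from \eqref{eqexactsolution}, which is exactly your family of affine $\ell$, rather than only the two physical states. For each $\bm\xi_0$ on the Dirichlet curve you need an admissible $\ell$ with:
\begin{itemize}
\item normal derivatives of the correct sign on $\Gamma_{wing}\cup\Gamma_{sym}$;
\item $\ell(\bm\xi_0)=\phi(\bm\xi_0)$;
\item $\ell\ge\sqrt{1+|\bm\xi|^2}+\varepsilon$ on the \emph{entire} curve $\Gamma_{cone}^{\infty}\cup\Gamma_{cone}^{\beta}$.
\end{itemize}
The set $\{\ell\ge\sqrt{1+|\bm\xi|^2}+\varepsilon\}$ is a convex conic region. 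Such $\ell$ therefore exist because the whole Dirichlet curve is convex, has outward normals pointing into $\overline{\mathcal{D}_2}$, and meets $\Gamma_{sym}$ and $\Gamma_{wing}$ perpendicularly. This is the content of $w^+=\inf_{\Sigma^\beta_+}w^{\bm\eta}=1+\varepsilon$ on the Dirichlet boundary in the paper's Step 1.

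\textbf{The rest of your plan is sound.}
\begin{itemize}
\item Propagating the gradient bound inward by the maximum principle for the velocity components (which satisfy a linear equation with no zeroth-order term), together with reflection and $\rmD\phi(O)=0$, is an acceptable substitute for \cite[Lemma 3.5]{LY22}.
\item Your direct proof of strictness in \eqref{eq phi inf} is correct and simpler than the paper's sub-solution family $\Sigma^\beta_-$. At a critical point the zeroth-order term is $2(w^2-1)w/(1+|\bm\xi|^2)>0$. The paper's route, however, also yields the interior bound $w\ge1+\varepsilon+\delta$, which is needed later.
\item The step you flagged as the main obstacle is not one. Both Neumann lines pass through $O$, so $\bm\xi\cdot\bm\nu=0$ on them and $\rmD w_+\cdot\bm\nu=\rmD\ell\cdot\bm\nu/\sqrt{1+|\bm\xi|^2}$ exactly. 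Thus $\ell=a\xi_1-\delta\xi_2+b$ with $\delta>0$, $a\cos\beta>\delta\sin\beta$ and $b$ large works immediately.
\end{itemize}
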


\begin{proof}
To simplify the notation, we shall drop the subscripts of $\phi_{\mu,\varepsilon}$ and $w_{\mu,\varepsilon}$ in the proof of this lemma. The proof will be divided into three steps:

1. The auxiliary function $w$ given by \eqref{eq phi to w} enables us to construct a super-solution to the problem \eqref{eq: for Fmu} with \eqref{eq:nianxing BC}. Note that for an arbitrary constant vector $\bm{\eta}=(\eta_1,\eta_2,\eta_3)\in \mathbb{R}^{3}$,
    \begin{equation}\label{eqexactsolution}
		{w}^{\bm{\eta}}(\bm{\xi})=\frac{\bm{\eta}\cdot(\bm{\xi},1)}{\sqrt{1+|\bm{\xi}|^{2}}}
	\end{equation}
solves equation \eqref{eq for w} exactly, and this solution is Lipschitz bounded in $\Omega$. 

Let us consider a set with fixed $\varepsilon>0$
	\begin{equation*}
		\Sigma_{+}^{\beta}\doteq\{\bm{\eta}\in\mathbb{R}^{3}:~ P^{\hat{\bm{\eta}}}\in \mathcal{D}_1, \text{ and }{w}^{\bm{\eta}}>1+\varepsilon~\text{on the Dirichlet boundary}~\Gamma_{cone}^{\infty}\cup\Gamma_{cone}^{\beta}\},
	\end{equation*}
where $P^{\hat{\bm{\eta}}}=(\hat{\eta}_1,\hat{\eta}_2)=(\frac{\eta_1}{\eta_3},\frac{\eta_2}{\eta_3})$ and
    \begin{equation*}
    \mathcal{D}_1\doteq\{(\xi_1,\xi_2)~|~\frac{3\pi}{2}+\beta<\arctan(\xi_2/\xi_1)<2\pi\}.  
    \end{equation*}
The expression \eqref{eqexactsolution} implies that ${w}^{\bm{\eta}}$ is monotonically decreasing with respect to the angle between $\frac{(\bm{\xi},1)}{\sqrt{1+|\bm{\xi}|^{2}}}$ and $\bm{\eta}$ for each fixed $\bm{\eta}\in \mathbb{R}^{3}$. This monotonicity property ensures that for $\bm{\eta}\in \Sigma_{+}^{\beta}$, we have $\rmD w^{\bm{\eta}}\cdot\bm{\nu}_{w}>0$ and $\rmD w^{\bm{\eta}}\cdot\bm{\nu}_{sy}>0$ on the Neumann boundaries $\Gamma_{wing}\cup\Gamma_{sym}$, and moreover, the operator $\mathcal{N}_\mu $ is locally uniformly elliptic with respect to $w^{\bm{\eta}}$. Then, it follows from Lemma \ref{lemma: comparison principle} that $w\leq w^{\bm{\eta}}$ in $\overline{\Omega}\setminus\overline{\Gamma_{cone}^{\infty}\cup\Gamma_{cone}^{\beta}}$. We define $w^+$ as the infimum of all such supersolutions $w^{\bm{\eta}}$. Obviously, 
	\begin{equation}\label{sup for w}
		w\leq w^+\quad\text{in $\overline{\Omega}\setminus\overline{\Gamma_{cone}^{\infty}\cup\Gamma_{cone}^{\beta}}$}.
	\end{equation}
Furthermore, the convexity of the boundary $\Gamma_{cone}^{\infty}\cup\Gamma_{cone}^{\beta}$ ensures 
	\begin{equation}\label{w positive bd}
		w^+=1+\varepsilon=w\quad\text{on $\Gamma_{cone}^{\infty}\cup\Gamma_{cone}^{\beta}$}.
	\end{equation} 

2. With the use of \eqref{sup for w}--\eqref{w positive bd}, we then show the estimate \eqref{eq for phi lip}. By \cite[Lemma 3.5]{LY22}, the maximum of $|\rmD\phi|^{2}$ is attained at an interior point only if the function is constant, which leads to
	\begin{equation}\label{eq lip for w}
		\|\rmD{\phi}\|_{L^{\infty}(\Omega)}\leq \|\rmD{\phi}\|_{L^{\infty}(\partial\Omega)}.
	\end{equation}
Besides, the boundary conditions \eqref{eq:nianxing BC}$_2$--\eqref{eq:nianxing BC}$_3$ imply $\rmD{\phi}=0$ at the corner point $O$. Therefore, from Remark \ref{remark: property} below, we can reduce \eqref{eq lip for w} to
	\begin{equation}\label{eq lip for phi2}
		\|\rmD{\phi}\|_{L^{\infty}(\Omega)}\leq \|\rmD{\phi}\|_{L^{\infty}(\Gamma_{cone}^{\infty}\cup\Gamma_{cone}^{\beta})}\leq \|\rmD{\phi}^{+}\|_{L^{\infty}(\Gamma_{cone}^{\infty}\cup\Gamma_{cone}^{\beta})},
	\end{equation}
where $\phi^+\doteq\sqrt{1+|\bm{\xi}|^2}w^+$, and the second inequality is derived from \eqref{sup for w}--\eqref{w positive bd}. Thus, the estimate \eqref{eq for phi lip} holds.

3. We finally establish a sub-solution to the problem \eqref{eq: for Fmu}--\eqref{eq:nianxing BC}, again employing the function $w$. Consider a set with fixed $\varepsilon>0$:
    \begin{equation*}
		{\Sigma}_{-}^{\beta}\doteq\{\bm{\eta}\in\mathbb{R}^{3}: P^{\hat{\bm{\eta}}}\in \mathcal{D}_2, \text{ and } {w}^{\bm{\eta}}<1+\varepsilon~\text{on the Dirichlet boundary}~\Gamma_{cone}^{\infty}\cup\Gamma_{cone}^{\beta}\},
	\end{equation*}
    with
    \begin{equation*}
    \mathcal{D}_2\doteq\{(\xi_1,\xi_2)~|~\frac{\pi}{2}+\beta<\arctan(\xi_2/\xi_1)<\pi\}.  
    \end{equation*}
As in the above analysis for the exact solution ${w}^{\bm{\eta}}$, when $\bm{\eta}\in {\Sigma}_{-}^{\beta}$, one has $\rmD w^{\bm{\eta}}\cdot\bm{\nu}_{w}<0$ and $\rmD w^{\bm{\eta}}\cdot\bm{\nu}_{sy}<0$ on $\Gamma_{wing}\cup\Gamma_{sym}$. Moreover, from \eqref{eq phi to w}, \eqref{sup for w} and \eqref{eq lip for w}--\eqref{eq lip for phi2}, together with the assumption $\phi\geq \sqrt{1+|\bm{\xi}|^2}+\varepsilon$ in $\overline{\Omega}\setminus\overline{\Gamma_{cone}^{\infty}\cup\Gamma_{cone}^{\beta}}$, the operator $\mathcal{N}_\mu$ is locally uniformly elliptic about the solution $w$. 

By virtue of Lemma \ref{lemma: comparison principle}, one can deduce that for any $\bm{\eta}\in {\Sigma}_{-}^{\beta}$, the function $w^{\bm{\eta}}$ serves as a sub-solution to the problem \eqref{eq for w}--\eqref{BC for w} within the subdomain satisfying $w^{\bm{\eta}} > 1$; otherwise, the relation $w^{\bm{\eta}} \leq 1<w$ always holds. Accordingly, the inequality $w\geq w^{\bm{\eta}}$ is valid throughout $\overline{\Omega}\setminus\overline{\Gamma_{cone}^{\infty}\cup\Gamma_{cone}^{\beta}}$.

Let us define by ${w}^-$ the supremum of all such $w^{\bm{\eta}}$. It follows immediately that $w \geq w^-$ in $\overline{\Omega}$. Furthermore, for any subdomain separated from the Dirichlet boundary $\Gamma_{cone}^{\infty}\cup\Gamma_{cone}^{\beta}$, one can find a constant $\delta>0$ determined only by $\Omega$ so that in this subdomain
	\begin{equation}\label{uniform lowerbound}
		{w}^{-}\geq 1+\varepsilon+\delta.
	\end{equation}
In fact, for an arbitrary point $\bm{\xi}_{0}$ in the subdomain, taking $0<\delta\ll 1$ sufficiently small and setting $\bm{\eta}_{0}=(1+\varepsilon+\delta)\frac{(\bm{\xi}_{0},1)}{\sqrt{1+|\bm{\xi}_{0}|^{2}}}$ yields $\bm{\eta}_{0}\in \Sigma^\beta_{-}$, from which \eqref{uniform lowerbound} follows.

According to \eqref{uniform lowerbound} and $w \geq w^-$ in $\overline{\Omega}$, with \eqref{sup for w}, we obtain the estimate \eqref{eq phi inf}. The proof of this lemma is completed.
\end{proof}

\begin{remark}\rm\label{remark: property}
Any point on the Neumann boundaries $\Gamma_{wing}$ and $\Gamma_{sym}$ can be treated as an interior point of $\Omega$ because the problem \eqref{eq: for Fmu}--\eqref{eq:nianxing BC} is invariant under rotational transformations and possesses reflection symmetry about the axis. \qed
\end{remark}

\begin{remark}\rm \label{remark: property for epsilon}
    In the above analysis of Lemma $\ref{lemma: lip}$, to ensure the local uniform ellipticity of $\mathcal{N}_\mu$, the condition $\phi_{\mu,\varepsilon}\geq \sqrt{1+|\bm{\xi}|^2}+\varepsilon$ in $\overline{\Omega}\setminus\overline{\Gamma_{cone}^{\infty}\cup\Gamma_{cone}^{\beta}}$ can be relaxed to $\phi_{\mu,\varepsilon}\geq \sqrt{1+|\bm{\xi}|^2}$ therein and $\phi_{\mu,\varepsilon}\geq \sqrt{1+|\bm{\xi}|^2}+\varepsilon$ in its strict interior. \qed 
\end{remark}

\subsection{The proof of main theorem}
After obtaining the above Lipschitz estimate, we are in a position to prove Theorem \ref{thm4} by the strategy given at the beginning  of this section. For a fixed $\varepsilon>0$, we first show the existence of a viscosity solution
to the problem \eqref{D2conicalpotential} with \eqref{eq:nianxing BC} as follows.

\textit{Step} 1. $J_{\varepsilon}$ contains $0$. With the choice of $\mu=0$, equation \eqref{eq: for Fmu} reduces to
\begin{equation}\label{eqmueq0}
	\sum^{2}_{i,j=1} A_{ij}(0, \phi) \partial_{ij}\phi\doteq\Delta\phi+\rmD^{2}\phi[\bm{\xi},\bm{\xi}]=0\quad\text{in $\Omega$}.
\end{equation}
The problem \eqref{eqmueq0} with \eqref{eq:nianxing BC} is a mixed boundary-value problem for a linear uniformly elliptic equation. Using \cite[Theorem 1]{Lieb88} and Remark \ref{remark: property}, we obtain a unique solution $\phi_{0,\varepsilon}\in C^{1}(\overline{\Omega})\cap C^2(\Omega\cup\Gamma_{sym}\cup\Gamma_{wing})$. Besides, $\phi_{0,\varepsilon}\geq \sqrt{1+|\bm{\xi}|^2}+\varepsilon$ in $\overline{\Omega}\setminus\overline{\Gamma_{cone}^{\infty}\cup\Gamma_{cone}^{\beta}}$ follows directly from the Hopf lemma and maximum principle, and therefore $0\in J_{\varepsilon}$.

\textit{Step} 2. The closedness of $J_{\varepsilon}$. Consider a sequence $\{\mu_m\}_{m=1}^\infty \subset J_{\varepsilon}$ converging to $\mu_\infty$. Let $\phi_{m, \varepsilon}$ denote the associated solutions corresponding to $\mu_m$. To prove that the limit belongs to the set, i.e., $\mu_\infty \in J_\varepsilon$, the key point here is to establish uniform estimates for the sequence.

The uniform ellipticity of the linearized equation,
\begin{equation*}
	\sum^{2}_{i,j=1} A_{ij}(\mu_{m}, \phi_{m,\varepsilon}) \partial_{ij}\phi=0,
\end{equation*}
is guaranteed by the Lipschitz estimate derived in \eqref{eq phi inf} and \eqref{eq for phi lip}. We proceed by analyzing the estimates of $\phi_{m,\varepsilon}$ in three distinct regions: the interior, the Dirichlet boundary $\Gamma_{cone}^{\infty}\cup\Gamma_{cone}^{\beta}$, and the corner point $O$.

First, for any subdomain $\Omega_{sub} \subset \Omega\cup\Gamma_{wing}\cup\Gamma_{sym}$ located away from the corners, by Remark \ref{remark: property}, standard interior Schauder estimates (refer to \cite{GT01}) imply that the norm $\|\phi_{m,\varepsilon}\|_{C^{2,\alpha}(\Omega_{sub})}$ remains uniformly bounded with respect to $\mu_m$.

Second, regarding the boundary $\Gamma_{cone}^{\infty}\cup\Gamma_{cone}^{\beta}$ of class $C^{1,1}$, we note that the intersection corner points where this boundary meets $\Gamma_{sym}$ and $\Gamma_{wing}$ can be handled via reflection principles, effectively treating them as boundary points. Thus, by applying \cite[Theorem 5.1]{GH80}, we derive uniform $C^{1,\kappa_1}$ estimates in a neighborhood of this boundary, where $\kappa_1 \in (0,1)$.

Finally, the estimate near the corner point $O$ requires the weighted norm described in \cite{Lieb88}. Utilizing the weighted norm
\begin{align*}
    |\phi|^{-1-\kappa_2}_{2} \doteq \sup_{\delta>0}\delta^{1-\kappa_2}\|\phi\|_{C^{1,1}(\Omega_\delta)},
\end{align*}
we invoke \cite[Lemma 1.3]{Lieb88} to obtain a uniform bound $|\phi_{m,\varepsilon}|^{-1-\kappa_2}_{2} \leq C$, where $\kappa_2 \in (0,1)$, and $\Omega_\delta=\{x:~x+y\in\Omega ~\text{if}~|y|\leq\delta\}$ for every $\delta>0$. This is crucial as it ensures $\phi_{m,\varepsilon} \in C^{1,\kappa_2}$ in a neighborhood of $O$.

By the above estimates, the sequence $\{\phi_{m,\varepsilon}\}$ is uniformly bounded and equicontinuous in the space $C^1(\overline{\Omega})$. Application of the Arzel\`a-Ascoli theorem yields a subsequence $\{\phi_{m_k, \varepsilon}\}$ converging to a limit function $\phi_{\infty, \varepsilon}$ in $C^1(\overline{\Omega})$. Furthermore, the uniform interior $C^2$ estimates and the convergence imply that $\phi_{\infty,\varepsilon}\in C^1(\overline{\Omega})\cap C^2(\Omega\cup\Gamma_{sym}\cup\Gamma_{wing})$. Then, the limit $\phi_{\infty, \varepsilon}$ satisfies the boundary value problem for $\mu_\infty$ and the requisite inequality constraint, and therefore $\mu_\infty \in J_\varepsilon$.

\textit{Step} 3. The openness of $J_{\varepsilon}$. Let us fix $\mu_{0}\in J_{\varepsilon}$ and denote the corresponding solution by $\phi_{\mu_{0},\varepsilon}$. Direct linearization of equation \eqref{eq: for Fmu} at $\phi_{\mu_{0},\varepsilon}$ yields a non-negative zero-order coefficient, which poses difficulties in establishing the uniqueness of solutions to the linearized equation. To overcome this difficulty, we introduce another auxiliary function $\psi_{\mu,\varepsilon}$ as follows:
\begin{equation}\label{eq: for tran. z}
    \psi_{\mu,\varepsilon}\doteq\text{arccosh}\left(\frac{{\phi}_{\mu,\varepsilon}}{\sqrt{1+|\bm{\xi}|^{2}}}\right).
\end{equation}
Substituting \eqref{eq: for tran. z} into \eqref{eq: for Fmu}, we derive the governing equation for $\psi_{\mu,\varepsilon}$:
\begin{multline}\label{eq: for auxiliary z}
    \big(1+m(\rmD \psi)\big)(\Delta \psi+\rmD^{2}\psi[\bm{\xi},\bm{\xi}])-\mu(1+|\bm{\xi}|^{2})\rmD^{2}\psi[\rmD \psi+(\rmD \psi\cdot\bm{\xi})\bm{\xi}, \rmD \psi+(\rmD \psi\cdot\bm{\xi})\bm{\xi}]\\
    +2\big(1+(1-\mu)m(\rmD \psi)\big)\rmD \psi\cdot\bm{\xi}+\big(1+m(\rmD \psi)\big)\frac{2+(1-\mu)m(\rmD \psi)}{(1+|\bm{\xi}|^{2})\tanh \psi}=0,
\end{multline}
which can be rewritten in the operator form
\begin{equation}\label{eq1:3.1}
 \sum^{2}_{i,j=1} a_{ij}(\mu, \rmD \psi)\partial_{ij}\psi+L(\mu, \psi, \rmD \psi)=0,
\end{equation}
where the term $m(\rmD \psi)$ is given by
\begin{equation*}
	m(\rmD \psi)\doteq(1+|\bm{\xi}|^2)(|\rmD \psi|^2+|\rmD \psi\cdot\bm{\xi}|^2).
\end{equation*}
The corresponding boundary conditions of $\psi_{\mu,\varepsilon}$ can be derived from 
\eqref{eq:nianxing BC} and \eqref{eq: for tran. z}.

Note that the principal coefficients of \eqref{eq: for auxiliary z} depend solely on $\rmD \psi$, rather than on $\psi$ itself, and $\partial_{\psi}L(\mu, \psi, \rmD \psi)<0$ follows from the fact that the function $\tanh \psi$ is a strictly increasing function. These properties ensure that the zero-order coefficient of the linearized equation corresponding to equation \eqref{eq: for auxiliary z} is negative.    

The linearization of \eqref{eq1:3.1} and the corresponding boundary conditions at $\psi_{\mu_{0},\varepsilon}=\text{arccosh}({\phi}_{\mu_0,\varepsilon}/{\sqrt{1+|\bm{\xi}|^{2}}})$ is given by
\begin{equation}\label{eq for w6}
	 \sum^{2}_{i,j=1} a_{ij}(\mu_0, \rmD \psi_{\mu_{0},\varepsilon})\partial_{ij}\psi+b_i(\mu_0, \rmD \psi_{\mu_{0},\varepsilon})\partial_i \psi+\partial_{\psi}L(\mu_{0},\psi_{\mu_{0},\varepsilon}, \rmD \psi_{\mu_{0},\varepsilon})\psi=g,
\end{equation}
and
\begin{equation*}
    	\begin{cases}
		\psi=\text{arccosh}(1+\varepsilon)\quad&\text{on $\Gamma_{cone}^{\infty}\cup\Gamma_{cone}^{\beta}$},\\
		\rmD \psi\cdot\bm{\nu}_{w}=0\quad&\text{on $\Gamma_{wing}$},\\
		\rmD \psi\cdot\bm{\nu}_{sy}=0\quad&\text{on $\Gamma_{sym}$},
	\end{cases}
\end{equation*}
where
\begin{align*}
b_i(\mu_0, \rmD \psi_{\mu_{0},\varepsilon})=\sum^{2}_{i,j=1}\partial_{p_i}a_{ij}(\mu_{0}, \rmD \psi_{\mu_{0},\varepsilon})\partial_{ij}\psi_{\mu_{0},\varepsilon}+\partial_{p_i}L(\mu_{0}, \psi_{\mu_{0},\varepsilon}, \rmD \psi_{\mu_{0},\varepsilon}),
\end{align*}
for $i=1,2$, with $(p_1,p_2)\doteq \rmD \psi$. Then, by a similar discussion as in \cite[p.11 and Appendix]{CQ19}, and noting that equation \eqref{eq for w6} has the same structure as equation $(3.12)_1$ in \cite{CQ19}, we obtain that the above mixed boundary value problem admits a one-to-one mapping from $g\in C^{-1,\kappa}(\Omega)$ to $\psi\in C^{1,\kappa}(\Omega)$ satisfying
\begin{equation*}
\|\psi\|_{C^{1,\kappa}(\Omega)}\leq C\|g\|_{C^{-1,\kappa}(\Omega)}.
\end{equation*}
Here, the space $C^{-1,\kappa}(\Omega)$, as utilized in \cite{Chensx08,CQ12SCM,CQ19}, is defined as a set of functions $g$ that can be decomposed as
\begin{equation*}
g=\sum^{2}_{i=1}\partial_{\xi_i} g_i+g_0, \quad \text{with } g_i\in C^{\kappa}(\Omega) \text{ for } i=0,1,2,
\end{equation*}
equipped with the norm 
\begin{equation*}
\|g\|_{C^{-1,\kappa}(\Omega)}=\mathrm{inf}\{\sum^{2}_{i=0}\|g_i\|_{C^{\kappa}(\Omega)}\}.
\end{equation*}
This result implies the invertibility of the linearized operator of the nonlinear mapping $\mu\mapsto \psi$ at $(\mu_{0},\psi_{\mu_{0},\varepsilon})$. From the implicit function theorem, the solution can be extended to a neighborhood of $\mu_{0}$, confirming that $\mu_{0}$ is an interior point of $J_{\varepsilon}$.

Consequently, one has $J_{\varepsilon}=[0,1]$, which ensures the existence of $\phi_{1,\varepsilon}$ for any $\varepsilon > 0$.

We finally consider the limit $\mathop{\mathrm{lim}}\limits_{\varepsilon \to 0^+} \phi_{1,\varepsilon}$. The uniform estimates \eqref{eq phi inf} and \eqref{eq for phi lip} imply that the family $\{\phi_{1,\varepsilon}\}$ is uniformly bounded in $\mathrm{Lip}(\overline{\Omega})$. Then, there exists a subsequence converging in $C^0(\overline{\Omega})$ to a limit function $\phi$. Besides, the independence of the estimates \eqref{eq lip for w} and \eqref{uniform lowerbound} with respect to $\mu$ and $\varepsilon$ guarantees that equation \eqref{D2conicalpotential} is locally uniformly elliptic, i.e., uniformly elliptic in any subdomain at a positive distance from the degenerate boundary $\Gamma_{cone}^{\infty}\cup\Gamma_{cone}^{\beta}$. Thus, as in \textit{Step} 2, utilizing the interior estimate (cf. \cite[Theorem 6.17]{GT01}) and the corner estimate (cf. \cite[Lemma 1.3]{Lieb88}), we deduce that the limit function satisfies $\phi\in C^{0}(\overline{\Omega})\cap C^{1,\kappa}(\overline{\Omega}\setminus\overline{\Gamma_{cone}^{\infty}\cup\Gamma_{cone}^{\beta}})\cap C^\infty(\Omega\cup\Gamma_{sym}\cup\Gamma_{wing})$, which is a solution to Problem B. Moreover, from Remark \ref{remark: property for epsilon}, the result of Lemma \ref{lemma: lip} can be extended to the case $\varepsilon=0$, which ensures the Lipschitz boundedness of the limit $\phi$ in a neighborhood of $\overline{\Gamma_{cone}^{\infty}\cup\Gamma_{cone}^{\beta}}$, thereby establishing the regularity that $\phi\in \mathrm{Lip}(\overline{\Omega}) \cap C^{1,\kappa}(\overline{\Omega}\setminus\overline{\Gamma_{cone}^{\infty} \cup \Gamma_{cone}^{\beta}})\cap C^{\infty}(\Omega\cup\Gamma_{sym} \cup \Gamma_{wing})$. 

The proof of the main theorem is completed. 

\section{Further discussions}\label{sec:Further}
In this section, we will present a brief discussion on the possible global flow field structures of the $\Lambda$-wing proposed by K\"uchemann in \cite{KD65}. We will also analyze the case of an asymmetric conical wing with $\Lambda$-shaped cross sections, in which the left and right wings have different sweep angles (see \eqref{asy sweep wing} below). This case can be regarded as the direction of the velocity of incoming flow no longer parallel to the $x_1Ox_3$-plane in Problem A above.

\subsection{On the possible flow field structure of the \texorpdfstring{$\Lambda$}--wing}\label{sec:possible}
Based on the preliminary numerical calculations, for the conical wing with $\Lambda$-shaped cross sections, K\"uchemann put forward a speculation on the global flow field structure at different anhedral angles $\beta$; see Figure \ref{fgpossible}, which is redrawn from \cite[p.304]{KD65}. For Chaplygin gas, we have previously verified the flow field structure $(b)$ and $(c)$ in Figure \ref{fgpossible}; namely, the case $(b)$ corresponds to the critical angle $\beta=\beta_c$ (see Figure \ref{fgbetaeqc}), and the case $(c)$ corresponds to $\beta=0$ (see Figure \ref{figbeta0}). However, we find a new flow field structure (i.e., $\beta=\beta_0$, see Figure \ref{fgbetaeqbeta0}) in which there exists an oblique shock perpendicular to the conical wing, quite different from the case $(a)$. Thus, it is natural to ask whether the flow field structure of the case $(a)$ exists.

%-----------------------------------fig-----------
\begin{figure}[htb]
\centering
\includegraphics[scale=0.45]{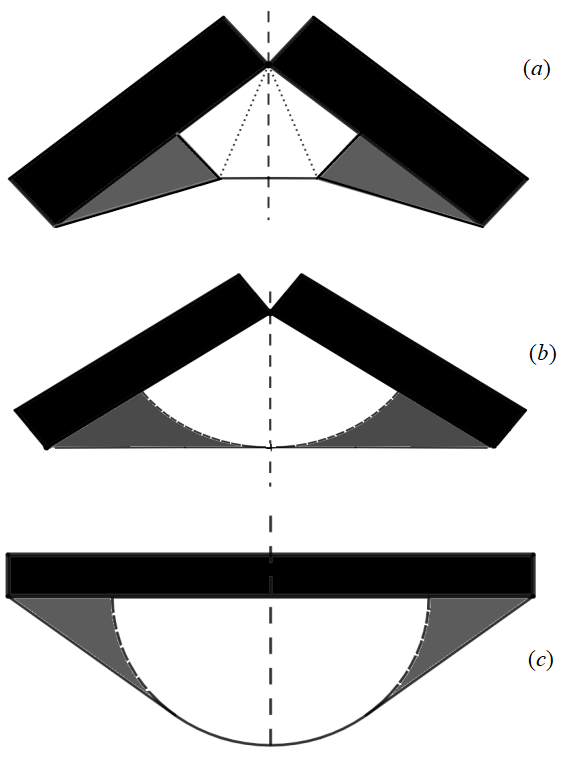}
\caption{Possible mixed flow regimes.}\label{fgpossible}
\end{figure}
 %-----------------------------------fig-----------

For the Chaplygin gas, if the case $(a)$ of Figure \ref{fgpossible} exists, then since any shock is characteristic, the straight shock that connects to the oblique shock attached to the leading edge of the wing must be tangent to the characteristic cone $\mathcal{C}_\infty$ corresponding to the incoming flow. Also, as analyzed in Section \ref{sec2.2}, this situation only occurs at the anhedral angles $\beta=\beta_c$, which is the case $(b)$ of Figure \ref{fgpossible}. Thus, we can confirm that the case $(a)$ cannot exist.

As for the polytropic gas, we only give a preliminary analysis. Owing to the structural symmetry, it is sufficient to consider the half-side for the case $(a)$ of Figure \ref{fgpossible}, in which there exists a three-shock pattern. From the construction of Mach reflection in \cite{supshockCF}, we know that to maintain the stability of the three-shock pattern, it is necessary to introduce a contact discontinuity. Additionally, the flow on both sides of the contact discontinuity is parallel to it. This implies that once the contact discontinuity exists, it must be parallel to the surface of the wing as the flow satisfies the slip boundary condition, unless the flow is along the $x_3$-axis. To explain the location of the degenerate curves, we present a simplified diagram (see Figure \ref{simplify}), which corresponds to the left half of the case $(a)$. 

 %-----------------------------------fig-----------
\begin{figure}[htb]
\centering
\includegraphics[scale=1]{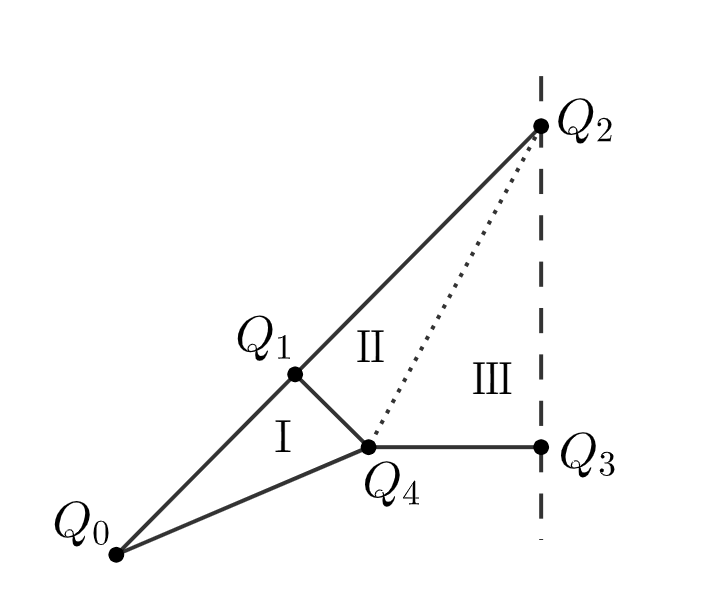}
\caption{The possible structure for conical wing.}\label{simplify}
\end{figure}
 %-----------------------------------fig-----------

In Figure \ref{simplify}, the line $Q_{0}Q_{2}$ denotes the compression surface of the conical wing; the lines $Q_0Q_4$, $Q_1Q_4$ and $Q_3Q_4$ denote the oblique shock waves; and $Q_2Q_4$ denotes the contact discontinuity; the symbols I, II, III stand for the domains in which the flow is uniform determined by the oblique shocks $Q_0Q_4$, $Q_1Q_4$ and $Q_3Q_4$. Note that the flow in the domains II and III must be along the $x_3$-axis; otherwise, the contact discontinuity $Q_2Q_4$ will be parallel to the compression surface of the conical wing. 

It is feasible to construct a particular solution that satisfies the above-mentioned structure by the shock polars for the full compressible Euler equations. However, verifying the stability of the above flow field structure is non-trivial; in other words, determining whether the above structure will occur is not straightforward. We leave this for future work.

\subsection{Conical wings with asymmetric cross-sections}\label{sec:Asywing}
In the previous sections, we always assume that the conical wing is symmetrical about the $x_{1}Ox_{3}$-plane. So it is natural to consider an asymmetric one $W^\beta_{\sigma,\hat{\sigma}}$ defined by \eqref{asy sweep wing} below, which can also be regarded as an incoming flow with a velocity component along the $x_2$-axis past the conical wing defined by \eqref{sweep wing}. By abuse of notation, we still adopt the previous notations.

%-----------------------------------fig-----------
\begin{figure}[htbp]
\centering
\begin{minipage}[t]{0.9\textwidth}
\centering
\subfigure[$\beta=0$]{
\includegraphics[width=5cm]{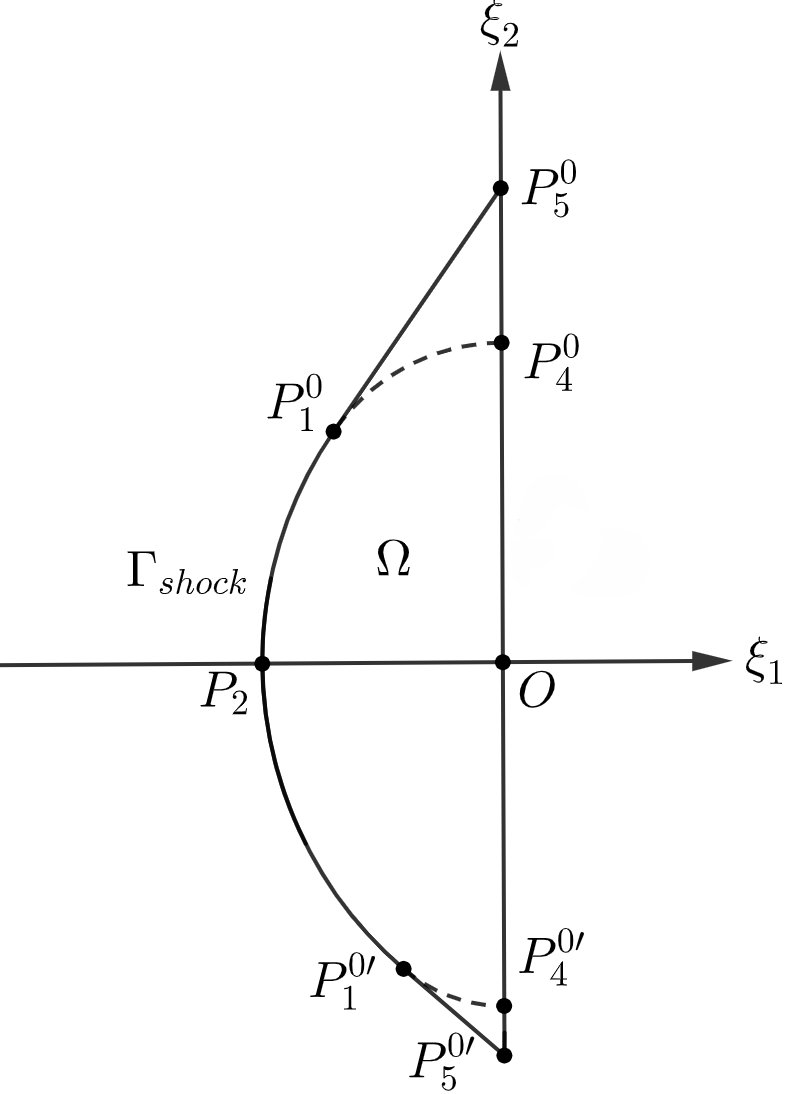}
\label{1}}
\subfigure[$0<\beta<\beta_c$]{
\includegraphics[width=5cm]{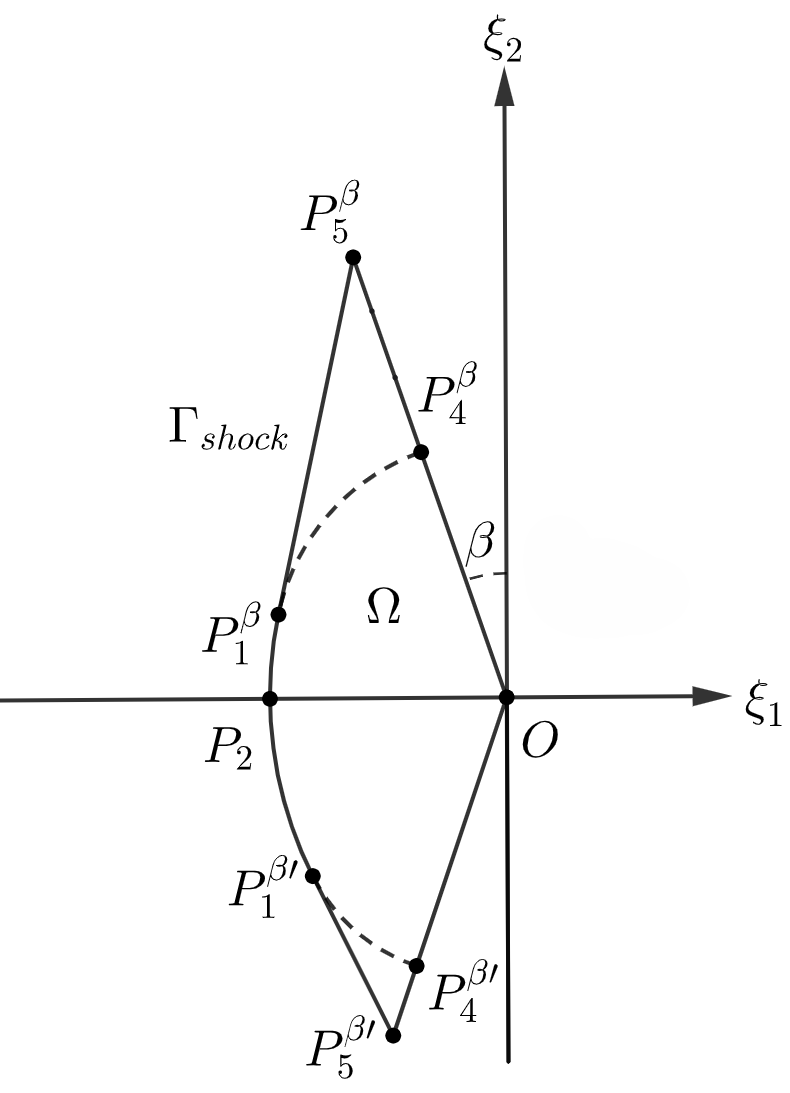}
\label{2}}
\end{minipage}
\caption{Patterns of shock waves for asymmetric wing.}
\end{figure}
 %-----------------------------------fig-----------

Let us begin with an asymmetric conical wing $W^\beta_{\sigma,\hat{\sigma}}$ with $\Lambda$-shaped cross section, given by
\begin{equation} \label{asy sweep wing}
	W^\beta_{\sigma,\hat{\sigma}}=\{(x_1,x_2,x_3):x_1=-|x_2|\tan\beta,-x_{3}\cot\hat{\sigma}\cos\beta<x_{2}<x_{3}\cot\sigma\cos\beta,x_{3}>0\},
\end{equation}
where $\sigma,\hat{\sigma},\beta\in(0,{\pi}/{2})$. 
Without loss of generality, we assume that $\sigma<\hat{\sigma}$. For any fixed $\alpha\in(0,\alpha_0)$ and $\sigma,\hat{\sigma}\in(0,\sigma_0]$, using what we did in Section \ref{sec2pre}, we can derive the location of the shock and the uniform flow state outside the Mach cone when $\beta$ is less than a critical angle, where one of the attached shocks is perpendicular to the wing $W^\beta_{\sigma,\hat{\sigma}}$. In the following, we briefly present the structures of shock waves and necessary explanations, but omit the detailed proof herein.

 %-----------------------------------fig-----------
\begin{figure}[htbp]
\centering
\begin{minipage}[t]{0.9\textwidth}
\centering
\subfigure[$\beta=\beta_c$]{
\includegraphics[width=5cm]{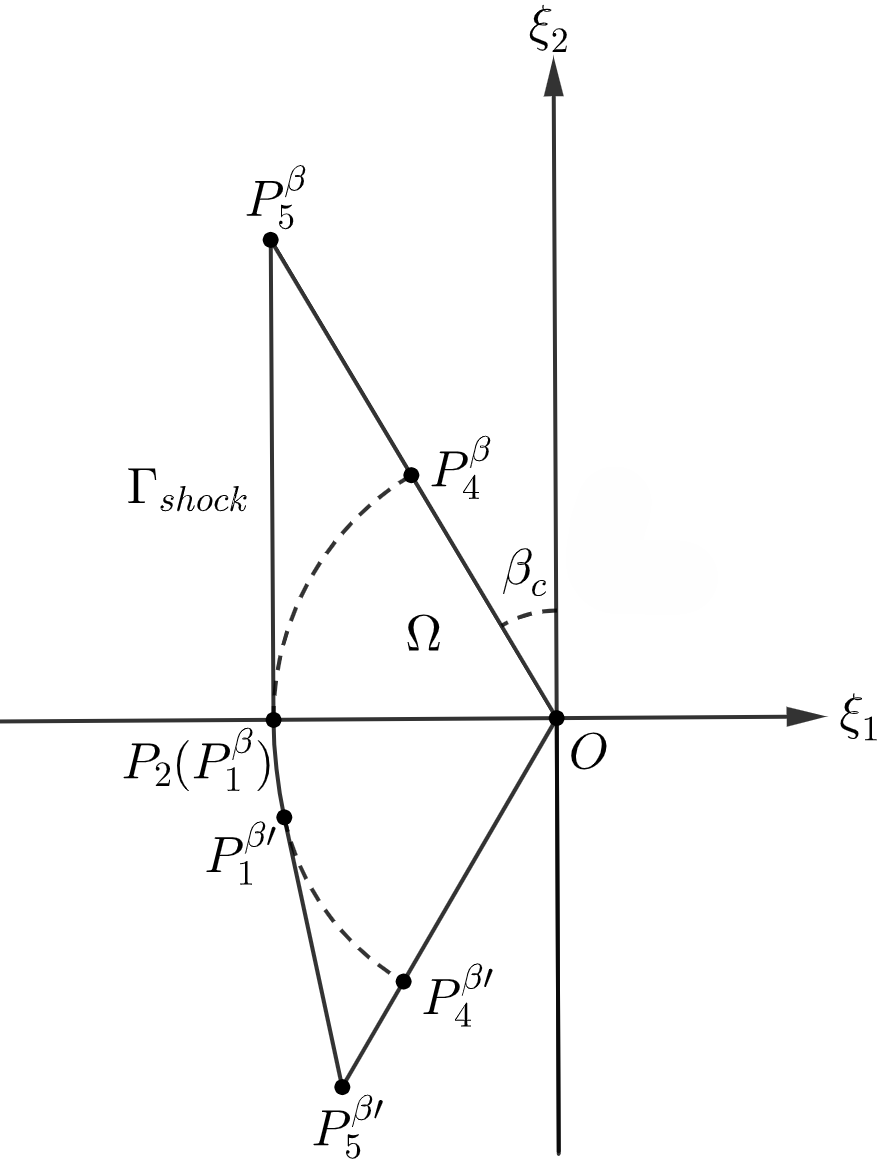}
\label{3}}
\subfigure[$\beta=\beta^a_c$]{
\includegraphics[width=5cm]{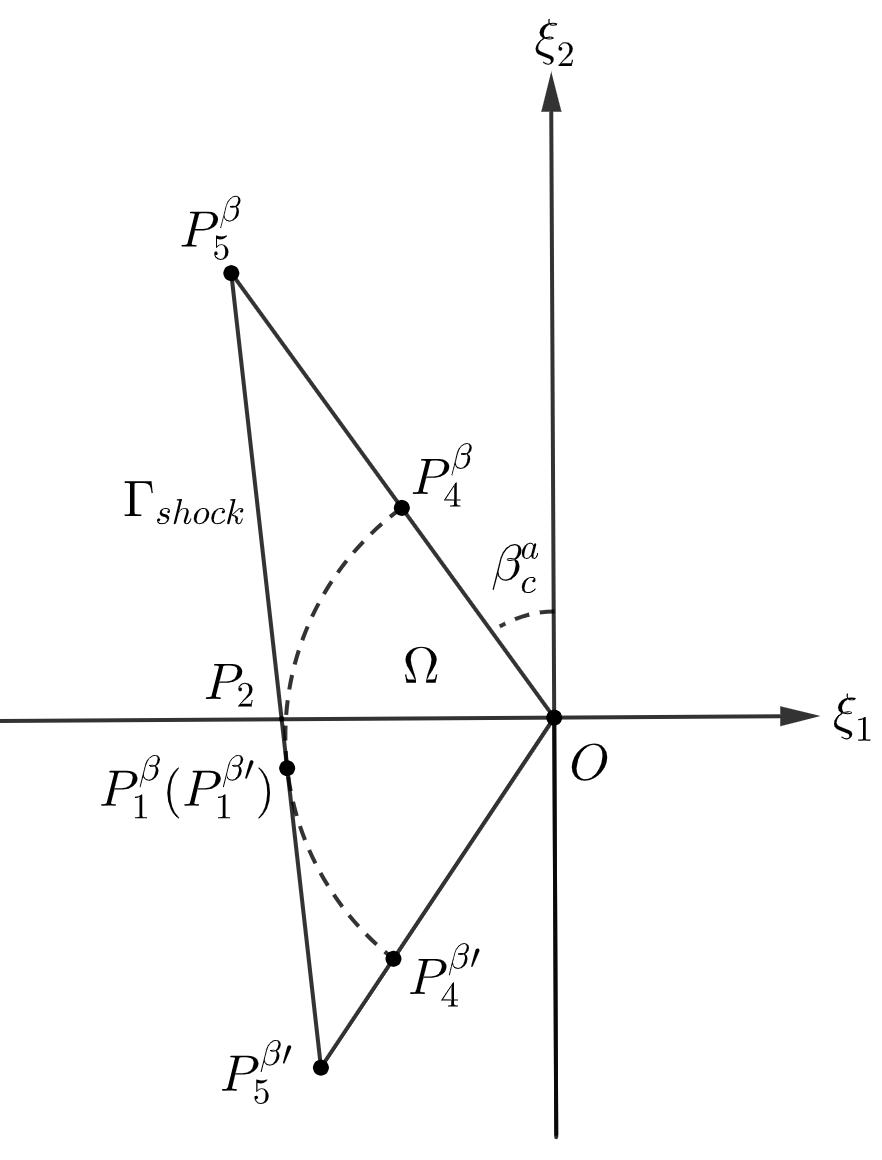}
\label{4}}
\end{minipage}
\caption{Patterns of shock waves for asymmetric wing.}
\end{figure}
 %-----------------------------------fig-----------

 %-----------------------------------fig-----------
\begin{figure}[htbp]
\centering
\begin{minipage}[t]{0.9\textwidth}
\centering
\subfigure[$\beta^a_c<\beta<\beta^a_0$]{
\includegraphics[width=4.7cm]{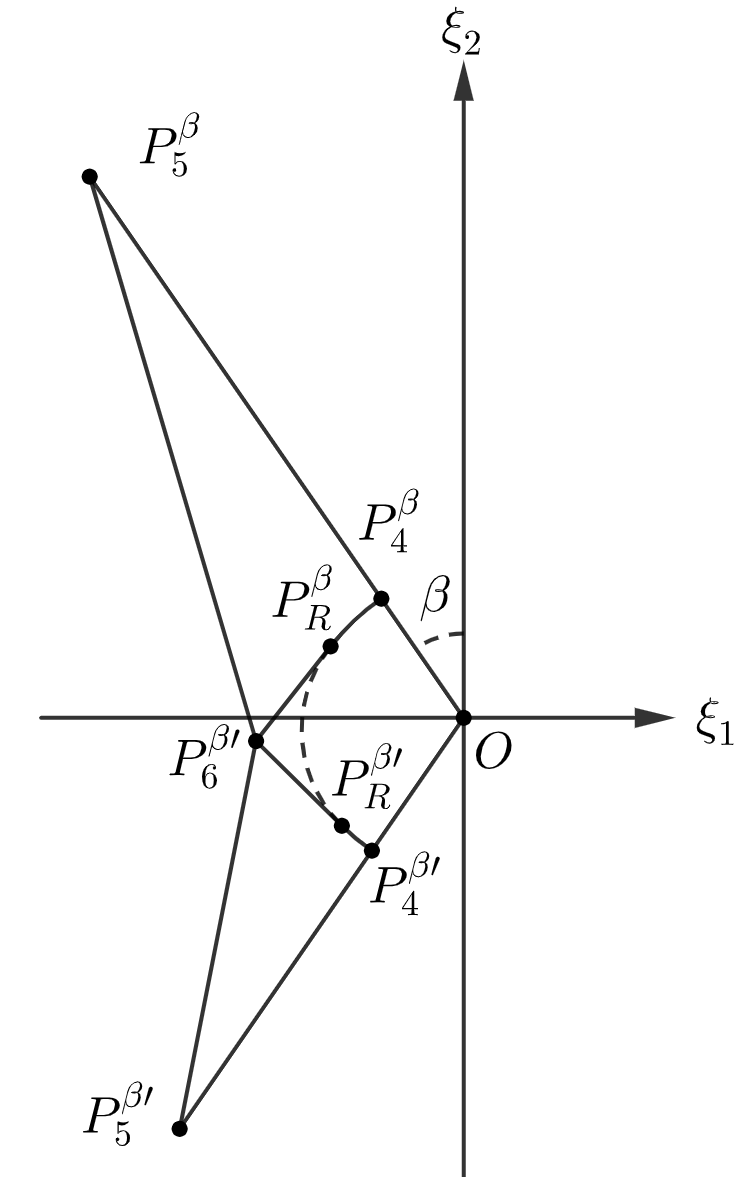}
\label{5}}
\subfigure[$\beta=\beta^a_0$]{
\includegraphics[width=4.7cm]{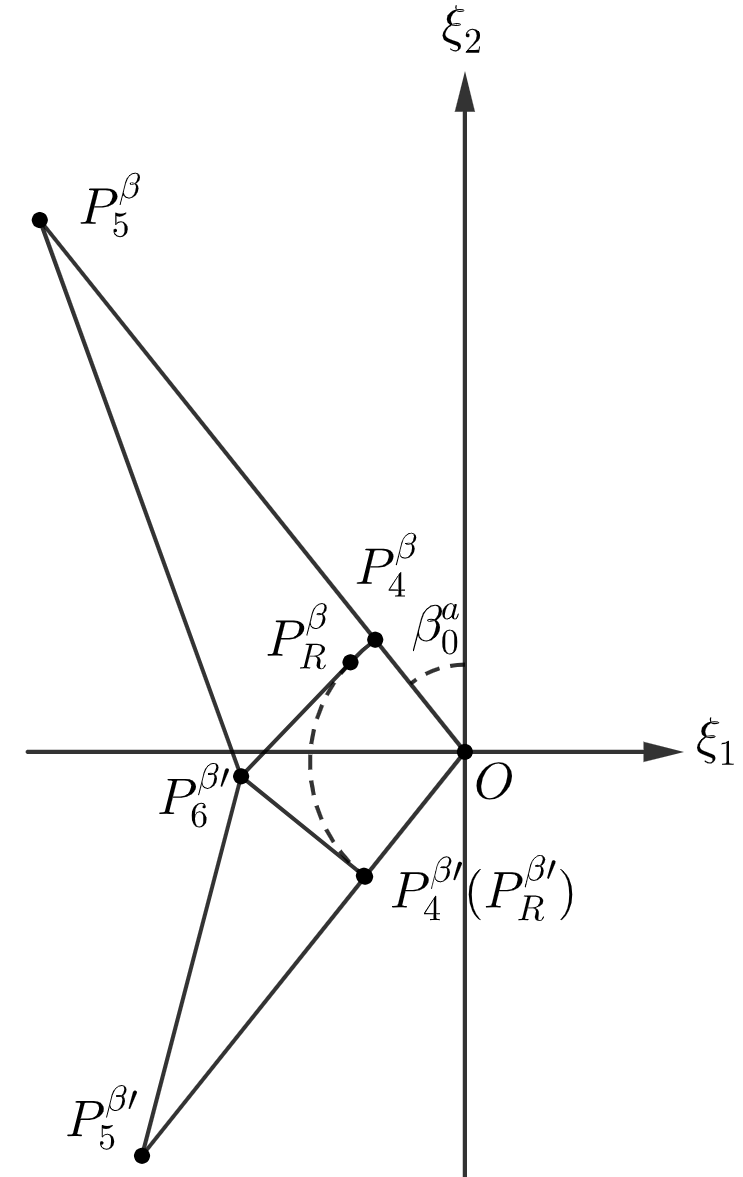}
\label{6}}
\end{minipage}
\caption{Patterns of shock waves for asymmetric wing.}
\end{figure}
 %-----------------------------------fig-----------
 
As discussed in \cite[Section 2.2]{LY22}, we can obtain the global flow field structure in Figure \ref{1} for the case $\beta=0$. Besides, under the assumption that $\sigma<\hat{\sigma}$, it is easy to verify that when $0<\beta<\beta_c$, neither of the two attached shock waves is perpendicular to the $\xi_1$-axis (see Figure \ref{2}), and when $\beta=\beta_c$, one of them is perpendicular to the $\xi_1$-axis (see Figure \ref{3}), where $\beta_c$ is defined by \eqref{eqdefinebetac}. Moreover, there must exist a critical angle $\beta^a_c>\beta_c$ such that the attached shock becomes planar, which corresponds to the flow field structure of Nonweiler wing (see Figure \ref{4}). Furthermore, there also exists a critical angle $\beta^a_0$ such that when $\beta^a_c<\beta<\beta^a_0$, two attached shocks intersect at a point and generate two oblique resulting shocks that do not reach the wing $W^\beta_{\sigma,\hat{\sigma}}$ (see Figure \ref{5}); when $\beta=\beta^a_0$, one of the oblique resulting shocks reaches and is perpendicular to the wing $W^\beta_{\sigma,\hat{\sigma}}$ (see Figure \ref{6}). Notably, further analysis is required to determine which of the resulting shocks will first become perpendicular to the wing $W^\beta_{\sigma,\hat{\sigma}}$.

As for the flow inside the Mach cone, the corresponding problems can be investigated by the method developed in Section \ref{sec:3} with more refined analytical techniques. We also leave this for future works.

\appendix
\section{Shock polar for Chaplygin gas}\label{appendix a}
From the properties introduced in Section \ref{sec1}, we present the basic results about the shock polar for the Chaplygin gas here, and the detailed calculations can be found in \cite[Appendix A]{LY22}. 

For a given state of incoming flow $(\rho_0,(u_0, 0))$ with $u_0 > c_0$ and $c_0 =a/\rho_0> 0$, the shock line $\mathcal{S}$ is thereby determined with $\overrightarrow{OO_0}=(u_0,0)$ and $|O_0P|=c_0$; then the angle $\gamma$ between $\mathcal{S}$ and the direction of the incoming flow satisfies $\sin\gamma=\frac{c_{0}}{u_{0}}$. Moreover, the state of downstream flow $(\rho_1,(u_1, v_1))$ can be uniquely determined by the direction of its velocity $\theta=\arctan(\frac{v_1}{u_1})$ with $\overrightarrow{OO_1}=(u_1,v_1)$ and $|O_1P|=c_1 $, i.e.,
\begin{align*}
    c_1=\frac{c_0-\tan\theta\sqrt{u^2_0-c^2_0}}{\sqrt{u^2_0-c^2_0}+c_0\tan\theta}\sqrt{u^2_0-c^2_0}
\end{align*}
and
\begin{align}\label{u1v1polar}
u_1=\frac{u_0\sqrt{u^2_0-c^2_0}}{\sqrt{u^2_0-c^2_0}+c_0\tan\theta}, \quad\quad
v_1=\frac{u_0\tan\theta\sqrt{u^2_0-c^2_0}}{\sqrt{u^2_0-c^2_0}+c_0\tan\theta}.
\end{align}
Thus, the trajectory of the point $O_1(u_{1},v_{1})$ in the $(u, v)$-plane describes the shock polar $O_0P$ as $\theta$ varies, where the coordinates of $O_0$ are  $(u_{0},0)$; $P$ is the tangent point between the shock line $\mathcal{S}$ and the circle $C_{1}$ with center $O_{1}(u_{1},v_{1})$ and radius $c_{1}=a/\rho_{1}$ (see Figure \ref{fgshockpolar}).

\begin{figure}[H]
	\centering
	\begin{tikzpicture}[scale = 1.1, smooth]
	\draw [-latex] (-1,0)--(5,0) node[right] {\footnotesize$u$};
	\draw [-latex] (0,-2.2)--(0,2.5) node[above] {\footnotesize$v$};
	\node [below left, font=\footnotesize] (0,0) {$O$};
	\draw [thick,-stealth] (0,0)--(2.5,0) node[below=-1pt, font=\footnotesize] {$O_0$};
	\draw (2.5,0) circle (1.5);
	\draw (0,0)--(1.6,1.2)--(2.8,2.1) node[right, font=\footnotesize] {$\mathcal{S}$};
	\draw (1.6,1.2)--(2.5,0);
	\draw (1.52,1.14)--(1.58,1.06)--(1.66,1.12);
	\draw [thick,-stealth] (0,0)--(1.9,0.8) node[below=1pt, font=\footnotesize] {$O_1$};
	\draw [densely dotted](1.9,0.8) circle (0.51);
	\draw (0:0.3) arc (0:22:0.3);
	\node [right, font=\footnotesize] at (0.21,0.08) {\tiny$\theta$};
	\draw (0:0.6) arc (0:-8:0.6);
	\draw (0:0.5) arc (0:38:0.5);
	\node [right, font=\footnotesize] at (0.4,0.15) {\tiny$\gamma$};
	\fill(2.5,0)circle(1.3pt);
	\fill(1.9,0.8)circle(1.3pt);
	\fill(1.6,1.2)circle(1.3pt);
	\node [left, font=\footnotesize] at (1.6,1.2) {$P$};
	\draw [-latex](2.25,0.35)--(3.0,0.35) node[right] {\footnotesize{shock polar}};
	\end{tikzpicture}
	\caption{Shock polar for a Chaplygin gas.}
	\label{fgshockpolar}
\end{figure}
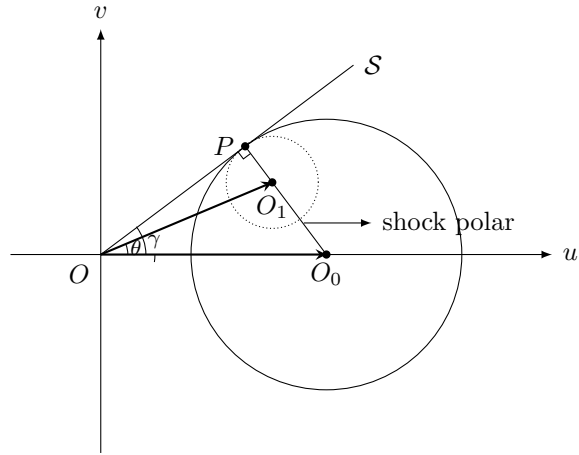

It follows from Figure \ref{fgshockpolar} that as $\theta \to \gamma$, the flow behind the shock will concentrate. To avoid this phenomenon, the incoming flow should satisfy the following condition
\begin{align}\label{restrictpolar}
    c_0<u_0<\frac{c_0}{\sin\theta},
\end{align}
which is deduced from the relations $\theta<\gamma$ and $\sin\gamma=\frac{c_{0}}{u_{0}}$.

\section*{Acknowledgments} 
The authors are grateful to Prof. Myoungjean Bae (KAIST) for her comments and suggestions on the manuscript. This study was supported by the Natural Science Foundation of Hubei under Grant No. 2024AFB007, and the Science and Technology Commission of Shanghai Municipality under Grants No.\,24ZR1420000 and No.\,22DZ2229014.

\medskip

\noindent{\bf Data Availability:} The paper does not use any data set. 

\medskip 

\noindent \textbf{Declarations Conflict of Interest:}  The authors state that there is no conflict of interest.

%\bibliographystyle{plain}

%\bibliography{references}
\end{document}